\newtheorem{Theorem}{Theorem}
\newtheorem{Lemma}[Theorem]{Lemma}
\newtheorem{Proposition}[Theorem]{Proposition}
\newtheorem{Corollary}[Theorem]{Corollary}
\newtheorem{Problem}{Problem}
\def\vect#1{\mathbf{#1}}
\title{Bounding the number of $(\sigma,\rho)$-dominating sets in trees, forests and graphs of bounded pathwidth}
\author{Matthieu Rosenfeld}
\begin{document}
\maketitle
\begin{abstract}
The notion of $(\sigma,\rho)$-dominating set generalizes many notions including dominating set, induced matching, perfect codes or independent sets.
Bounds on the maximal number of such (maximal, minimal) sets were established for different $\sigma$ and $\rho$ and different classes of graphs.
In particular, Rote showed that the number of minimal dominating sets in trees of order $n$ is at most $95^{\frac{n}{13}}$ and 
Golovach et Al. computed the asymptotic of the number of $(\sigma,\rho)$-dominating sets in paths for all $\sigma$ and $\rho$.

Here, we propose a method to compute bounds on the number of $(\sigma,\rho)$-dominating sets in graphs or bounded pathwidth, trees and forests,
under the conditions that $\sigma$ and $\rho$ are finite unions of (possibly infinite) arithmetic progressions.
It seems that this method shouldn't always work, 
but in practice we are able to give many sharp bounds by direct application of the method.
Moreover, in the case of graphs of bounded pathwidth, we deduce the existence of an algorithm that can output abritrarily good approximations of the growth rate.
\end{abstract}
\section{Introduction}
The notion of $(\sigma, \rho)$-domination was introduced by Telle as a generalization of domination-type problems \cite{Telle}. 
Given two sets $\sigma$  and $\rho$ of non-negative integers, a vertex subset $D\subseteq V$ is a 
$(\sigma, \rho)$-dominating set of a given graph $G=(V,E)$, if $|N(v)\cap D|\in\sigma$ for all $v\in D$ 
and $|N(v)\cap D|\in\rho$ for all $v\in V\setminus D$. 
This framework can be used to express several well-known graph problems, like Dominating Set
$(\sigma =\mathbb{N}, \rho= \mathbb{N}^+ )$, Independent Set $(\sigma =\{0\}, \rho= \mathbb{N})$, 
Independent Dominating Set $(\sigma =\{0\}, \rho= \mathbb{N}^+ )$, Induced Matching $(\sigma =\{1\}, \rho= \mathbb{N})$, etc.

Bounding the number of such sets is of certain interest for algorithmic.
For instance, the celebrated bound by Moon and Moser of $3^{\frac{n}{3}}$
on the maximal number of maximal independent sets in a graph of order $n$ 
was used by Lawler to give an algorithm computing an optimal coloring of a graph in 
$O^*((1+3^{\frac{1}{3}})^n)$ \cite{Moon1965,Lawler1976ANO}.
Recently Rote showed that the maximal number of minimal dominating sets of trees of order $n$ is $95^{\frac{n}{13}}$ \cite{r-mnmds-19,arxivRote}.
On the other hand, Golovach et Al. gave asymptotics on the number of (all, minimal, maximal) $(\sigma,\rho)$-dominating sets in paths 
(remark that in the case of path there are only finitely many interesting $\sigma$ and $\rho$) \cite{sigmarhopath}.
Here, we generalize the approach of these two articles and we describe a computer assisted technique that computes sharp bounds on the number of 
(all, minimal, maximal) $(\sigma,\rho)$-dominating sets. 
We require $\sigma$ and $\rho$ to be finite unions of arithmetic progressions, which is the case for many classical problems (in particular, if $\sigma$ and $\rho$ are finite or co-finite).
In theory this method might not work all the time, but in practice we can use it to give many bounds in the last section.

We start by giving some useful and non standard definitions in Section \ref{secdef}.
In Section \ref{secalgo}, we give a general algorithm to count the number of (all, minimal, maximal) $(\sigma,\rho)$-dominating sets in graphs of bounded pathwidth, trees, and forests.
Finally, in Section \ref{secbounds}, we use the fact that the operations of the algorithm are (multi)linear operations to compute the bounds.
Then using a C++ implementation of the described technique, we can give some examples of bounds that we were able to compute.

We allow ourself to omit a lot of proof details, most of them being rather trivial but painful to write and to read.

\section{Definitions and notations}\label{secdef}
We denote the set of all non-negative integers by $\mathbb{N}$ and the set of positive integers by $\mathbb{N}^+$.

For any given graph $G=(V,E)$ and any vertex $v\in V$, let $N_G(v)$ be the neighbors of $v$ in $G$.
We will omit the $G$ and write $N(v)$ when $G$ is clear in the context.
The \emph{order} of a graph is the number of vertices of the graph.

For any set $X\subseteq\mathbb{N}$, we denote by $1_X:\mathbb{N}\mapsto\{0,1\}$ the indicator function of $X$, that is:
$$1_X(n)=\left\{
  \begin{array}{ll}
      1 & \text{ if }n\in X\\
      0 & \text{ otherwise } \\
    \end{array}
  \right.$$
  
For any positive integers $a$ and $b$, we denote by $a\mod b$ the remainder of the euclidean division of $a$ by $b$.
  
Let $\tau:\mathbb{N}^3\mapsto\mathbb{N}$ be the function such that for all $p,n\in\mathbb{N}$ and $q\in\mathbb{N}^+$:
$$\tau(p,q,n)=\left\{
  \begin{array}{ll}
      n & \text{ if } n <p\\
      ((n-p)\mod q)+p & \text{ otherwise } \\
    \end{array}
  \right.$$
Remark that for any $m,n\in \mathbb{N}$, $\tau(p,q,n+m)=\tau(p,q,\tau(p,q,n)+m)$.
  
For any vector $\vect{v}\in\mathbb{R}^n$, $\vect{v}_{i-1}$ is the $i$th coordinate of $\vect{v}$. 
For any two vectors $\vect{v},\vect{v'}\in\mathbb{R}^n$, $\vect{v} \le \vect{v'}$ means that for all $i\in\{0,\ldots, n-1\}$, $\vect{v}_i \le \vect{v}_i'$.

Let $F$ be a family of sets over a ground set $T$, then a set $X$ is 1-minimal if for all $x\in X$, $X\setminus \{x\} \not\in F$ and it is 1-maximal if for any $x\in T\setminus X$, $X\cup\{x\}\not\in F$. 

\subsection{Recognizable sets of integers}
We say that a set of integers $X\subseteq\mathbb{N}$ is \emph{recognizable} if the language $\{a^n|n\in X\}$ is recognizable.
It is a standard result that $X\subseteq\mathbb{N}$ is recognizable if and only if it is the finite union of arithmetic progressions (and this is the last mention of ``arithmetic progressions'' in this article).

By standard manipulations of automatons over one letter, we can get the following result:
\begin{Lemma}\label{functionautomaton}
 For any recognizable set of integers $S$, there exist $p_S,q_S\in \mathbb{N}$ such that
for all $n$, $1_S(n)=1_S(\tau(n,p_S,q_S))$.
\end{Lemma}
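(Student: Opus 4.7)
The plan is to translate the statement into automaton language and then exploit finiteness of the state set. Since $S$ is recognizable, there is a deterministic finite automaton $\mathcal{A}=(Q,\{a\},\delta,q_0,F)$ accepting $\{a^n : n\in S\}$. Writing $\delta^n(q_0)$ for the state reached after $n$ applications of $\delta$, we have $n\in S$ iff $\delta^n(q_0)\in F$, so that $1_S(n) = 1_F\!\bigl(\delta^n(q_0)\bigr)$ where I abuse notation slightly for $1_F$.

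Next I would observe the standard ``rho-shape'' of an orbit under a map on a finite set: the sequence $q_0,\delta(q_0),\delta^2(q_0),\ldots$ lives in the finite set $Q$, so there exist a smallest $p_S\ge 0$ and smallest $q_S\ge 1$ such that $\delta^{p_S+q_S}(q_0)=\delta^{p_S}(q_0)$, and both are bounded by $|Q|$. By induction on $n\ge p_S$, this gives $\delta^{n+q_S}(q_0)=\delta^n(q_0)$, and then a second induction (on how many times we subtract $q_S$) yields, for every $n\ge p_S$,
\[
\delta^n(q_0)=\delta^{((n-p_S)\bmod q_S)+p_S}(q_0)=\delta^{\tau(p_S,q_S,n)}(q_0).
\]
Applying $1_F$ to both sides gives $1_S(n)=1_S(\tau(p_S,q_S,n))$ for $n\ge p_S$, and for $n<p_S$ the definition of $\tau$ gives $\tau(p_S,q_S,n)=n$, so equality is trivial. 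This closes the proof.

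I do not foresee any serious obstacle; the statement is essentially the ultimate periodicity of unary regular languages, rephrased with the concrete wrapping function $\tau$. The only things that need care are: (i) making sure the indices in the definition of $\tau$ (``subtract $p_S$, reduce mod $q_S$, add $p_S$ back'') match the shape of the cycle produced by the pigeonhole argument, and (ii) confirming the boundary case $n<p_S$ via the first branch of $\tau$. Both are routine.
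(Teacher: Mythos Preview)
Your argument is correct and is exactly the ``standard manipulation of automata over one letter'' the paper alludes to in lieu of a proof: take a DFA for $\{a^n:n\in S\}$, use pigeonhole on the orbit of $q_0$ to get a tail length $p_S$ and cycle length $q_S\ge 1$, and read off $1_S(n)=1_S(\tau(p_S,q_S,n))$ from $\delta^n(q_0)=\delta^{\tau(p_S,q_S,n)}(q_0)$. One cosmetic point: the lemma as printed writes $\tau(n,p_S,q_S)$, but the definition of $\tau$ has argument order $(p,q,n)$; you silently (and correctly) used $\tau(p_S,q_S,n)$, which is clearly the intended reading.
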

Remark that this is equivalent to say that $1_S$ is ultimately periodic.

We will also need a slightly stronger version:
\begin{Lemma}\label{recog}
Let $\sigma,\rho\subseteq\mathbb{N}$ be recognizable sets of integers.
Then there are $p,q\in \mathbb{N}$ such that:
\begin{itemize}
 \item for all $n$, $(1_\sigma(n),1_\rho(n))=(1_\sigma(\tau(n,p,q)),1_\rho(\tau(n,p,q)))$,
 \item $(1_\sigma(p-1),1_\rho(p-1))=(1_\sigma(p+q-1),1_\rho(p+q-1))$.
\end{itemize}
\end{Lemma}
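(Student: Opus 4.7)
The plan is to apply Lemma \ref{functionautomaton} to $\sigma$ and $\rho$ separately and then combine the resulting periods into a single one that works for both. The only nontrivial point will be securing the extra endpoint condition $(1_\sigma(p-1),1_\rho(p-1))=(1_\sigma(p+q-1),1_\rho(p+q-1))$; the key trick is that, once a common eventually-periodic description is in place, shifting the preperiod up by one position makes this endpoint equality equivalent to a periodicity statement that already holds.

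In detail, Lemma \ref{functionautomaton} yields $p_\sigma,q_\sigma$ with $1_\sigma(n)=1_\sigma(\tau(p_\sigma,q_\sigma,n))$ for all $n$, and similarly $p_\rho,q_\rho$ for $\rho$. Set $p_0=\max(p_\sigma,p_\rho)$ and $q=\mathrm{lcm}(q_\sigma,q_\rho)$. I first check that $(p_0,q)$ already satisfies the first bullet. For $n<p_0$ this is immediate from $\tau(p_0,q,n)=n$. For $n\ge p_0$, I note that $\tau(p_0,q,n)\ge p_0\ge p_\sigma$ and $\tau(p_0,q,n)\equiv n\pmod{q_\sigma}$ since $q_\sigma\mid q$, so using the remark after the definition of $\tau$ we have $\tau(p_\sigma,q_\sigma,\tau(p_0,q,n))=\tau(p_\sigma,q_\sigma,n)$ and Lemma \ref{functionautomaton} gives $1_\sigma(n)=1_\sigma(\tau(p_0,q,n))$. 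The argument for $\rho$ is identical.

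Now set $p=p_0+1$. For the first bullet with $(p,q)$, the case $n<p$ is trivial, and for $n\ge p$ the value $\tau(p,q,n)$ coincides with $\tau(p_0,q,n)$ \emph{unless} $(n-p_0-1)\bmod q=q-1$, in which case $\tau(p,q,n)=p_0+q$ while $\tau(p_0,q,n)=p_0$. What therefore remains is the single equality $1_\sigma(p_0)=1_\sigma(p_0+q)$ (and its analogue for $\rho$). But this follows from the first bullet applied to $(p_0,q)$ at $n=p_0+q$, since $\tau(p_0,q,p_0+q)=p_0$. Finally, the second bullet at $p=p_0+1$ reads $(1_\sigma(p_0),1_\rho(p_0))=(1_\sigma(p_0+q),1_\rho(p_0+q))$, which is exactly the same equality we just used. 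The one delicate point, and hence the only real obstacle, is this corner-case bookkeeping on $\tau$ at the shifted preperiod; the remainder of the argument is a direct combination of the two univariate versions given by Lemma \ref{functionautomaton}.
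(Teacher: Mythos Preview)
Your argument is correct and follows the same route the paper sketches: take $p=\max(p_\sigma,p_\rho)+1$ and a common period, then use the shift by one in the preperiod to absorb the endpoint condition into the eventual periodicity. Note that the paper's one-line hint writes $q=\gcd(q_\rho,q_\sigma)$, which is a typo for $\operatorname{lcm}$; your choice of $q=\operatorname{lcm}(q_\sigma,q_\rho)$ is the intended one.
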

This can be deduced from the previous Lemma and by taking $q= \gcd(q_\rho,q_\sigma)$ and $p=\max(p_\rho,p_\sigma)+1.$

\subsection{Pathwidth, trees and forests}
We give the definitions of trees, forests, and graphs of pathwidth $k$.
These definitions are not the standard definitions,
but it is not hard to see that they are equivalent and it is left to the interested reader to check it.

A \emph{$k$-distinguished graph} is a pair $(G(V,E),S,<_S)$ where $G(V,E)$ is a graph, $S$ is a subset of $V$ with $|S|=k$ and $(S\times S)\cap E=\emptyset$
and $<_S$ is a strict total order on $S$. We say that the vertices of $S$ are the \emph{distinguished vertices} of the $k$-distinguished graph.

We say that \emph{a graph $G'(V',E')$ is obtained by completing a $k$-distinguished graph $(G(V,E),S,<_S)$}, if $E'=E\cup (E'\cap (S'\times S'))$.
In other words, one \emph{completes} a $k$-distinguished graph by adding some edges between the elements of $S$.

\paragraph{Pathwidth}
We say that \emph{a $k$-distinguished graph $(G'(V',E'),S',<_{S'})$ is obtained by extending $(G(V,E),S,<_S)$}, 
if there exists $n\in V'$ and $o\in S\cup\{n\}$ such that:
\begin{itemize}
 \item $V'$ is the disjoint union of $V$ and $\{n\}$,
 \item $S'= (S\cup\{n\})\setminus\{o\}$,
 \item $E'=E\cup (E'\cap (\{o\}\times S'))$,
 \item for any $a,b\in S\setminus\{o,n\}$, $a<_S b \iff a<_{S'} b$ and $n<_{S'}a \iff o<_{S}a$.
\end{itemize}
In other words, we add to $G$ and to $S$ a new vertex $n$ that has no neighbor, we remove one vertex $o$ from $S$ 
and we can add any edges between $o$ and $S\cup\{n\}$.
Remark that the vertex removed from $S$ can be the new vertex.
The only possible change in the ordering on the distinguished vertex is that $o$ is possibly replaced by $n$ in the order (unless $o=n$ and then $<_{S'}=<_S$).

A $k$-distinguished graph has \emph{pathwidth $k$}, if it 
can be obtained from a sequence of extensions starting from $(G(S,\emptyset),S,<_S)$.
The \emph{pathwidth of a graph $G$} is the smallest integer $k$ such that $G$
can be obtained by completing a $k$-distinguished graph of pathwidth $k$.

Intuitively the set of distinguished vertices correspond to a separator of a path-decomposition of the graph.

\paragraph{Trees and forest}
In order to build trees an forests, we will use $1$-distinguished graphs (that can be seen as rooted trees or rooted forests).
The ordering on the distinguished set for $1$-distinguished graphs is trivial (since it is a singleton) and we omit it.

 Given two $1$-distinguished graphs $(G_1(V_1,E_1),\{a_1\})$ and $(G_2(V_2,E_2),\{a_2\})$ their \emph{composition} is $(G(V_1\cup V_2,E_1\cup E_2\cup\{(a_1,a_2)\}),\{a_1\})$. That is, $G$ is the disjoint union of $G_1$ and $G_2$ in which we 
add an edge between $a_1$ and $a_2$ and where the distinguished vertex is $a_1$.

We are now ready to give the following inductive definition of a \emph{$1$-tree}: 
\begin{itemize}
 \item $(G(a,\emptyset),\{a\})$ is a $1$-tree,
 \item the $1$-distinguished graph obtained by the composition of two $1$-trees is a $1$-tree.
\end{itemize}
A graph is \emph{a tree} if it can be obtained by completing a $1$-tree.

Given two $1$-distinguished graphs $(G_1(V_1,E_1),\{a_1\})$ and $(G_2(V_2,E_2),\{a_2\})$ their \emph{union} is the 
$1$-distinguished graph $(G(V,E),\{a_1\})$ where $G$ is the disjoint union of $G_1$ and $G_2$. 
Remark that the union of $1$-distinguished trees is not a commutative operation (the same can be said of the composition).

We are now ready to give the definition of \emph{$1$-forest}:
\begin{itemize}
 \item $(G(a,\emptyset),\{a\})$ is a $1$-forest,
 \item the $1$-distinguished graph obtained by the composition or the union of two $1$-forests is a $1$-forest.
\end{itemize}
A graph is a \emph{forest} if it can be obtained by completing a $1$-forest.

\section[The number of dominating sets]{Counting the number of $(\sigma,\rho)$-dominating sets}\label{secalgo}
Let $\sigma$ and $\rho$ be two recognizable sets and $k$ be a positive integer. 
In this Section, we give dynamic algorithms to compute the number of (all, minimal, maximal) $(\sigma,\rho)$-dominating sets.
The existence of these dynamic algorithms shouldn't be a surpise to anybody familiar with path-width (and tree-width) and we are not really interested by the existence of these algorithms.
However, these algorithms consist of applying (multi)linear operators 
that correspond to the operations used to build the $k$-distinguished graph. 
In the next Section, we explain how to deduce from these sets of (multi)linear operators sharp upper bounds on the number of (all, minimal, maximal) $(\sigma,\rho)$-dominating sets.

\subsection[Counting all]{Counting the $(\sigma,\rho)$-dominating sets}
The definitions being a bit obtuse one should try to follow with the two first examples of Subsection \ref{examplepw1}.

First, we generalize the notion of $(\sigma,\rho)$-dominating set to $k$-distinguished graph.
For any $k$-distinguished graph $(G(V,E),S,<_S)$ and any set $D\in V$, we say that $D$ is a $(\sigma,\rho)$-dominating
set of $(G(V,E),S,<_S)$ if 
$|N(v)\cap D|\in\sigma$ for all $v\in D\setminus S$ and $|N(v)\cap D|\in\rho$ for all $v\in V\setminus (D\cup S)$. 
That is, we do not require the elements of $S$ to be properly dominated.

For any $k$-distinguished graph $g=(G(V,E),S,<_S)$ and any extension $g'=(G'(V',E'),S',<_S')$ of $g$, if $D$ is a $(\sigma,\rho)$-dominating set  of $g'$ then
$D\cap V$ is a $(\sigma,\rho)$-dominating set of $g$. Moreover, we will show that given a $(\sigma,\rho)$-dominating set $D$ of $g'$ it is ``easy'' to check if it is or if can be extended to 
a  $(\sigma,\rho)$-dominating set of $g'$. This will allow us to compute all the  $(\sigma,\rho)$-dominating sets of $g'$ from the  $(\sigma,\rho)$-dominating sets of $g$.

Let $p_\sigma, p_\rho,q_\sigma, q_\rho\in \mathbb{N}$ be as in Lemma \ref{functionautomaton}.
Let $n=p_\sigma+p_\rho+q_\sigma+q_\rho$. 
Given a $(\sigma,\rho)$-dominating set $D$ of a $k$-distinguished graph $(G(V,E),S,<_S)$ the \emph{state corresponding to $D$} of a vertex $v$ of $V$ is
an integer $s\in[0,n-1]$ where:
$$s=\left\{
  \begin{array}{ll}
     \tau(p_\sigma,q_\sigma,|N(v)\cap D|)&\text{  if }v\in D,\\
      \tau(p_\rho,q_\rho,|N(v)\cap D|) +p_\sigma+q_\sigma& \text{ otherwise.} \\
    \end{array}
  \right.$$ 
 
The \emph{state of $S$ corresponding to $D$} is the function that maps every vertex of $v$ to its state corresponding to $D$.
The idea is that knowing the state of $S$ is enough to know what happens to a  $(\sigma,\rho)$-dominating set
when we extend the $k$-distinguished graph:
\begin{Lemma}\label{pwaddn}
Let $g=(G(V,E),S,<_S)$ and $g'=(G'(V',E'),S',<_{S'})$ be two $k$-distinguished graphs, and $o$ and $n$ be two vertices of $V'$ such that
$g'$ is obtained by extending $g$ where $n$ is the new vertex and $o$ is the vertex removed from $S\cup\{n\}$.
Let $D$ be a $(\sigma,\rho)$-dominating set of $(G(V,E),S,<_S)$.

Given the state of $S$ associated to $D$ in $(G(V,E),S,<_S)$, $E'\setminus E)$, $o$ and $n$,
one can decide if $D\cup\{n\}$ is a $(\sigma,\rho)$-dominating set of $(G'(V',E'),S',<_{S'})$ and can compute the associated state of $S'$ in $(G'(V',E'),S',<_{S'})$. 
\end{Lemma}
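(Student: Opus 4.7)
Since every edge of $E'\setminus E$ has one endpoint in $\{o\}$ and the other in $S'$, the plan is to partition $V'$ according to how each vertex is affected by the extension and handle the pieces in turn, using throughout the identity $\tau(p,q,n+m)=\tau(p,q,\tau(p,q,n)+m)$ recalled in Section~\ref{secdef}. For any $v\in V\setminus(S\cup\{o\})$ we have $N_{G'}(v)\cap(D\cup\{n\})=N_G(v)\cap D$, since no new edge touches $v$ and $n$ is not adjacent to $v$; such a $v$ was already properly dominated in $g$ by hypothesis, so nothing has to be checked or updated. This discards the bulk of the graph in one stroke.

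For $v\in S\setminus\{o\}$, the only possibly new edge incident to $v$ is $(o,v)$, so $|N_{G'}(v)\cap(D\cup\{n\})|$ exceeds $|N_G(v)\cap D|$ by $0$ or $1$, the value being given by $E'\setminus E$ together with the membership $o\in D\cup\{n\}$ (readable from the state of $o$ when $o\in S$, and trivial when $o=n$). The state of $v$ in $g$ records $\tau(p_\bullet,q_\bullet,|N_G(v)\cap D|)$ with $\bullet\in\{\sigma,\rho\}$ selected by whether $v\in D$, and the $\tau$-identity immediately yields the updated $\tau$-value and hence the new state of $v$ in $g'$.

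The remaining vertices are $o$ and $n$, split into three subcases. If $o\in S$, then $o\notin S'$ and we must decide whether $o$ is properly dominated in $g'$: the quantity $|N_{G'}(o)\cap(D\cup\{n\})|-|N_G(o)\cap D|$ equals the number of $w\in S'$ with $(o,w)\in E'\setminus E$ and $w\in D\cup\{n\}$, each summand being readable from the state of $w$ (with $n\in D\cup\{n\}$ trivially), and iterating the $\tau$-identity and then applying Lemma~\ref{functionautomaton} to $\sigma$ or $\rho$ (the choice given by whether $o\in D$, again from the state) decides membership. If $o=n$, then $S'=S$ and the analogous computation applies: the neighbors of $n$ in $g'$ are the $w\in S$ with $(n,w)\in E'\setminus E$, their membership in $D$ is in the state, and we test against $\sigma$ since $n\in D\cup\{n\}$. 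If $o\neq n$, then $n\in S'$ and its only potential neighbor in $g'$ is $o$, so its state is directly $\tau(p_\sigma,q_\sigma,0)$ or $\tau(p_\sigma,q_\sigma,1)$ according to whether $(o,n)\in E'$ and $o\in D\cup\{n\}$.

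No step is really hard; the main (mild) obstacle is keeping a clean case split on $o$, in particular not conflating $o\in S$ with $o=n$, since each regime changes which vertex becomes finalized and how the state bookkeeping transfers. Once those cases are separated, each subcase is a direct application of the periodicity of $1_\sigma$ and $1_\rho$ provided by Lemma~\ref{functionautomaton}.
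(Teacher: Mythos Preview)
Your proof is correct and follows essentially the same approach as the paper's: both arguments observe that only edges between $o$ and $S'$ are new, dispose of $V\setminus S$ immediately, update the $\tau$-values of the surviving distinguished vertices via the identity $\tau(p,q,n+m)=\tau(p,q,\tau(p,q,n)+m)$, and test the finalized vertex $o$ against $\sigma$ or $\rho$ through Lemma~\ref{functionautomaton}. Your case split on $o\in S$ versus $o=n$ is more explicit than the paper's terser treatment, but the logical content is the same.
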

\begin{proof}
The new edges are only between $o$ and $S$, so any vertex from $V\setminus S$ that was properly 
$(\sigma,\rho)$-dominated by $D$ in $(G(V,E),S,<_{S})$ is still $(\sigma,\rho)$-dominated by $D\cup\{n\}$ in $(G'(V',E'),S',<_{S'})$.

If $o\not=n$ then we know the state of $o$ in $(G(V,E),S,<_{S})$,
otherwise if  $o=n$ then $o$ has no neighbor in $V\setminus S$.
So in both cases, if $o\in D$ (resp. $o\not\in D$) we know $\tau(p_\sigma,q_\sigma,|N_G(o)\cap D|)$ (resp. $\tau(p_\rho,q_\rho,|N_G(o)\cap D|)$).
But since we also know $E'\setminus E$, we can easily compute:
$$\tau(p_\sigma,q_\sigma,|N_{G'}(o)\cap D|)=\tau(p_\sigma,q_\sigma,\tau(p_\sigma,q_\sigma,|N_{G}(o)\cap D|)+|N_{G'}(v)\cap S\cap D|)$$ (or $\tau(p_\rho,q_\rho,|N_{G'}(v)\cap D|)$).
Thus we can decide if $D\cup\{n\}$ is a $(\sigma,\rho)$-dominating set of $(G'(V',E'),S',<_{S'})$.

Finally, for any vertex $v$ of $S'$:
\begin{itemize}
 \item if $(o,v)\in E'$ and $o\in D\cup \{n\}$, we can use the following equality:
 $$\tau(p,q,|N_G'(v)\cap D|)=\tau(p,q,|N_G(v)\cap D|+1)=\tau(p,q,\tau(p,q,|N_G(v)\cap D|)+1) $$
 \item otherwise, the state of $v$ in $(G'(V',E'),S',<_{S'})$ with $D$ is the same as in $(G(V,E),S,<_{S})$ with  $D\cup \{n\}$.
\end{itemize}
This concludes the proof.
\end{proof}
The essential point of this Lemma is that we need only look at $E'\setminus E$ and we need not know anything about the rest of the graph.

We can show exactly the same Lemma for $D$ instead of $D\cup\{n\}$:
\begin{Lemma}\label{pwkeepS}
Let $g=(G(V,E),S,<_{S})$ and $g'=(G'(V',E'),S',<_{S'})$ be two $k$-distinguished graphs, and $o$ and $n$ be two vertices of $V'$ such that
$g'$ is obtained by extending $g$ where $n$ is the new vertex and $o$ is the vertex removed from $S\cup\{n\}$.
Let $D$ be a $(\sigma,\rho)$-dominating set of $g$.

Given the state of $S$ associated to $D$ in $g$, $E'\setminus E$, $o$ and $n$,
one can decide if $D$  is $(\sigma,\rho)$-dominating set of $g'$ and can compute the associated state of $S'$ in $g'$. 
\end{Lemma}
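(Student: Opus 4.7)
The plan is to mirror the proof of Lemma~\ref{pwaddn}, exploiting the simplification that since $n \notin D$, the new vertex $n$ contributes nothing to any count of the form $|N(\cdot) \cap D|$. I would carry out the argument in three steps.

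First, I would observe that since $E' \setminus E \subseteq \{o\} \times S'$, no vertex of $V \setminus (S \cup \{o\})$ gains any new neighbor in $g'$, and because $n \notin D$, the insertion of $n$ into the vertex set also leaves every neighborhood count in $D$ unchanged for such vertices. Hence every vertex of $V \setminus S$ distinct from $o$ that was properly $(\sigma,\rho)$-dominated by $D$ in $g$ remains so in $g'$.

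Second, the unique vertex whose domination status newly needs to be checked is the one that leaves $S \cup \{n\}$: namely $o$ itself if $o \in S$, or $n$ if $o = n$. In the former case, the state of $o$ in $g$ provides $\tau(p_\rho, q_\rho, |N_G(o) \cap D|)$ or $\tau(p_\sigma, q_\sigma, |N_G(o) \cap D|)$ depending on whether $o \in D$; reading the new edges from $E' \setminus E$ and combining only the new neighbors that lie in $D$ (which we know from the state of $S$) lets us compute the correct residue of $|N_{G'}(o) \cap D|$ exactly as in Lemma~\ref{pwaddn}. In the latter case, $n$ has no neighbors in $V$, so $|N_{G'}(n) \cap D|$ equals the number of new edges from $n$ to vertices of $S \cap D$, which is read directly off $E' \setminus E$.

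Third, for each $v \in S'$ I would update the state exactly as in Lemma~\ref{pwaddn}: if $(o, v) \in E' \setminus E$ and $o \in D$, the count of $v$'s neighbors in $D$ increases by one, and the identity $\tau(p, q, |N_{G'}(v) \cap D|) = \tau(p, q, \tau(p, q, |N_G(v) \cap D|) + 1)$ propagates this to the state; otherwise the state is unchanged. The only genuinely new case is $v = n \in S'$, which occurs when $o \neq n$, and then $n$'s only potential neighbor is $o$, so its state is $0$ or $1$ (shifted by $p_\sigma + q_\sigma$ since $n \notin D$) depending on whether $(o, n) \in E'$ and $o \in D$. The main obstacle is only careful bookkeeping between the subcases $o \in S$ and $o = n$; no new idea beyond Lemma~\ref{pwaddn} is needed, which is presumably why the author elides the proof.
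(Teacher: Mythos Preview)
Your proposal is correct and follows exactly the approach the paper intends: the paper does not give a separate proof but simply states that the argument is the same as for Lemma~\ref{pwaddn} with $D$ in place of $D\cup\{n\}$, and your write-up carries out precisely that substitution with the natural simplification that $n\notin D$.
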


Note that the distinguished set has $n^k$ possible states. 
We can order them with the lexicographic order:
a state \emph{$s_1$  of $S$ is smaller than} another state $s_2$ if
there is a vertex $v$ of $S$ such that:
\begin{itemize}
 \item for all vertices $v'<_S v$, $v'$ has the same state in $s_1$ and $s_2$,
\item the state of $v$ is smaller in $s_1$ than in $s_2$.
\end{itemize}
 
For any $k$-distinguished graph $g=(G(V,E),S,<_{S})$,
let $s_i$ be the $i$th possible state of $S$ and
let $\Psi_{\sigma,\rho}(g)\in\mathbb{N}^{n^k}$ be the vector whose $i$th coordinate is the number
of $(\sigma,\rho)$-dominating sets $D$ of $g$ such that $s_i$ corresponds to $D$.

Using Lemma \ref{pwaddn} and Lemma \ref{pwkeepS}, we can easily deduce the following one:
\begin{Lemma}\label{pwexistsmat}
 Let $g=(G(V,E),S,<_{S})$ and $g'=(G'(V',E'),S',<_{S'})$ be two $k$-distinguished graphs, and $o$ and $n$ be two vertices of $V$ such that
$g'$ is obtained by extending $g$ where $n$ is the new vertex and $o$ is the vertex removed from $S\cup\{n\}$.

Given $E'\setminus E$, $o$ and $n$,
one can compute a matrix $M$ such that $$\Psi_{\sigma,\rho}(g')=M\Psi_{\sigma,\rho}(g)\,.$$
\end{Lemma}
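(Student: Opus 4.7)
The plan is to define $M$ entrywise by recording, for each possible state $s_i$ of $S$ in $g$, what the two ``extension choices'' (either keep $D$ unchanged or replace it by $D\cup\{n\}$) do: whether each choice produces a valid $(\sigma,\rho)$-dominating set of $g'$ and, if so, which state of $S'$ it yields. Both Lemma \ref{pwaddn} and Lemma \ref{pwkeepS} already guarantee that this information depends only on $s_i$ together with the data $E'\setminus E$, $o$, $n$ that we are given, so the construction of $M$ is effective.

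More precisely, first I would observe that any $(\sigma,\rho)$-dominating set $D'$ of $g'$ satisfies $D' = D$ or $D' = D \cup \{n\}$ for $D = D' \cap V$, and in either case $D$ is a $(\sigma,\rho)$-dominating set of $g$ (this is the remark stated right before Lemma \ref{pwaddn}). Conversely, starting from any $(\sigma,\rho)$-dominating set $D$ of $g$, Lemma \ref{pwkeepS} decides whether $D$ itself is a valid $(\sigma,\rho)$-dominating set of $g'$ (call the resulting $S'$-state $\phi_0(s_i)$ when it is valid) and Lemma \ref{pwaddn} decides whether $D \cup \{n\}$ is (call the resulting state $\phi_1(s_i)$). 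So the $(\sigma,\rho)$-dominating sets of $g'$ biject with pairs $(D, b)$ where $D$ is a $(\sigma,\rho)$-dominating set of $g$ and $b \in \{0,1\}$ selects a valid choice.

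I would then define the matrix $M \in \mathbb{N}^{n^k \times n^k}$ by
$$M_{j,i} = \bigl|\{\, b \in \{0,1\} : \phi_b(s_i) \text{ is defined and equals } s_j \,\}\bigr|,$$
which is computable from $s_i$, $E'\setminus E$, $o$, and $n$ by the two preceding lemmas. To verify $\Psi_{\sigma,\rho}(g') = M\, \Psi_{\sigma,\rho}(g)$, partition the $(\sigma,\rho)$-dominating sets $D'$ of $g'$ with state $s_j$ according to the state $s_i$ of $D' \cap V$ in $g$ and the bit $b$ indicating whether $n \in D'$; summing yields exactly $\sum_i M_{j,i} (\Psi_{\sigma,\rho}(g))_i$.

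The only genuine content beyond bookkeeping is that nothing outside the state and $E'\setminus E$ matters, which is already packaged in Lemmas \ref{pwaddn} and \ref{pwkeepS}; so the real ``hard part'' is just being careful not to double count and to check that both directions of the bijection between dominating sets of $g'$ and valid pairs $(D,b)$ are handled. Once that is set up, the linearity of $M$ as an operator on $\Psi_{\sigma,\rho}$ is immediate because the map is performed independently on each set counted by the corresponding coordinate of $\Psi_{\sigma,\rho}(g)$.
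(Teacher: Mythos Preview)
Your proposal is correct and follows exactly the approach the paper intends: the paper states that Lemma~\ref{pwexistsmat} is ``easily deduced'' from Lemmas~\ref{pwaddn} and~\ref{pwkeepS}, and you have simply written out that deduction carefully, including the bijection between $(\sigma,\rho)$-dominating sets of $g'$ and valid pairs $(D,b)$ and the resulting entrywise description of $M$. There is nothing to add or correct.
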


Once again, the crucial point of this Lemma is that $M$ only depends on $E'\setminus E$, $o$ and $n$.
Given a set $S$ there are only finitely many choices of $o$, $n$ and edges to add between $S$ and $o$.
Thus there are only finitely many possible matrices associated to an extension of a $k$-distinguished graph and one can compute all of these matrices.

Similarly, we get the following Lemma:
\begin{Lemma}
 Let $g=(G(V,E),S,<_{S})$ be a $k$-distinguished graph and $G'(V',E')$ be a graph obtained by completing $g$.

Given $E'\setminus E$, one can compute a vector $\vect{p}$ such that the number of $(\sigma,\rho)$-dominating sets of $G'$ is $\vect{p}\cdot\Psi_{\sigma,\rho}(g)$.
\end{Lemma}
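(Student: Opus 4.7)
The strategy is to observe that completing $g$ only adds edges inside $S \times S$, so for any set $D \subseteq V$ the neighborhood counts of vertices in $V \setminus S$ are unchanged between $g$ and $G'$. Consequently, a set $D$ is a $(\sigma,\rho)$-dominating set of $G'$ if and only if it is a $(\sigma,\rho)$-dominating set of $g$ (in the relaxed sense that ignores $S$) and, in addition, every vertex $v \in S$ satisfies $|N_{G'}(v)\cap D| \in \sigma$ whenever $v \in D$ and $|N_{G'}(v)\cap D| \in \rho$ otherwise. The plan is therefore to build $\vect{p}$ as a $0/1$ vector indicating, for each state $s_i$ of $S$, whether that state certifies the extra $S$-condition is met.

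Concretely, I would fix an index $i$ and read off from $s_i$ two pieces of information for every $v \in S$: first, whether $v \in D$ (this is detected by comparing the coordinate of $s_i$ at $v$ to the threshold $p_\sigma + q_\sigma$); and second, the value $\tau(p_\sigma, q_\sigma, |N_G(v)\cap D|)$ or $\tau(p_\rho, q_\rho, |N_G(v)\cap D|)$ that summarizes the $G$-side degree count modulo the ultimate period. Using $E'\setminus E \subseteq S \times S$, I can compute $c_v := |\{u \in S : (u,v) \in E'\setminus E \text{ and } u \in D\}|$ from the state alone. Then by the identity $\tau(p,q,a+b) = \tau(p,q,\tau(p,q,a)+b)$ recalled in Section~\ref{secdef}, I recover $\tau(p_\sigma,q_\sigma,|N_{G'}(v)\cap D|)$ or $\tau(p_\rho,q_\rho,|N_{G'}(v)\cap D|)$, and by Lemma~\ref{functionautomaton} this value determines whether the degree condition in $\sigma$ or $\rho$ holds for $v$. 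I set $\vect{p}_{i-1}=1$ iff the condition holds for every $v \in S$, and $0$ otherwise.

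With $\vect{p}$ defined this way, summing $\vect{p}_{i-1}\cdot(\Psi_{\sigma,\rho}(g))_{i-1}$ over $i$ exactly counts the $(\sigma,\rho)$-dominating sets $D$ of $g$ whose state $s_i$ also certifies the $S$-condition after the edges of $E'\setminus E$ are added; by the equivalence above, these are in bijection with the $(\sigma,\rho)$-dominating sets of $G'$. Hence $\vect{p}\cdot\Psi_{\sigma,\rho}(g)$ gives the desired count, and $\vect{p}$ was built from $E'\setminus E$ together with a case analysis on finitely many states, so it is effectively computable.

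There is no real obstacle here: the work mirrors Lemma~\ref{pwkeepS}, with the simplification that no new vertex is introduced and no state needs to be updated; one only performs the final verification step for each $v \in S$. The only mild care required is to handle $v \in D$ and $v \notin D$ separately (to know whether to test membership in $\sigma$ or in $\rho$), and to remember that the edges added by completion are entirely within $S$, so vertices outside $S$ are automatically fine since they were already properly dominated in $g$.
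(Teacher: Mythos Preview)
Your proposal is correct and follows exactly the approach the paper intends: the paper does not spell out a proof for this lemma (it only writes ``Similarly, we get the following Lemma''), but your argument is precisely the completion-step analogue of the computations in Lemmas~\ref{pwaddn} and~\ref{pwkeepS}, using the $\tau$ identity and Lemma~\ref{functionautomaton} to check the domination condition on each $v\in S$ from its state and the added edges in $E'\setminus E$.
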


We can now compute the number of $(\sigma,\rho)$-dominating sets of a graph $G$ of pathwidth at most $k$:
First, compute the sequence of extensions followed by a completion that turn $g_0=(G_0(S,\emptyset),S,>_S)$ into $G$,
then apply  to  $\Psi_{\sigma,\rho}(g_0)$ the sequence of matrices corresponding to the sequence of extensions 
and compute the dot product of the result with the vector corresponding to the completion.
Remark that in $g_0$ there is no edge yet, so a vertex has state $0$ or $p_\sigma+q_\sigma$, which implies
that we can easily find $\Psi_{\sigma,\rho}(g_0)$.

Remark that every extension increases the order of the $k$-distinguished graph by 1.
We can then deduce the following Lemma, that is crucial to bound the number of $(\sigma,\rho)$-dominating sets:
\begin{Lemma}\label{expressiontocountallpw}
Let $\sigma$ and $\rho$ be two recognizable sets of positive integers.
Then there is an integer $m$, a finite set of matrices $A_{\sigma,\rho}\subseteq \mathbb{N}^{m\times m}$, a vector $\vect{v}_{\sigma,\rho}\in  \mathbb{N}^{m}$
and a set of vectors $P_{\sigma,\rho}\subseteq\mathbb{N}^{m}$ such that:
for any integer $N$, $N$ is the number of $(\sigma,\rho)$-dominating sets of some graph of order $n>k$ if and only if there is a sequence $(M_i)_{1\le i\le n-k}\in A_{\sigma,\rho}^{n-k}$
and a vector $\vect{p}\in P_{\sigma,\rho}$ such that $N= \vect{p}\cdot (\prod_{i=1}^{n-k}M_i) \vect{v}_{\sigma,\rho}$.
\end{Lemma}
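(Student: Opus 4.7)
The plan is to package together Lemmas \ref{pwaddn}--\ref{pwexistsmat} and the completion lemma, by enumerating the finitely many possible ``local'' operations. I would take $m = n^k$ with $n = p_\sigma + q_\sigma + p_\rho + q_\rho$, which is precisely the dimension of $\Psi_{\sigma,\rho}(g)$ for any $k$-distinguished graph $g$. For the initial vector I would set $\vect{v}_{\sigma,\rho} = \Psi_{\sigma,\rho}(g_0)$, where $g_0 = (G_0(S,\emptyset),S,<_S)$ is the edgeless starting $k$-distinguished graph: since $g_0$ has no non-distinguished vertex and no edge, every $D \subseteq S$ is vacuously a $(\sigma,\rho)$-dominating set, and each vertex of $S$ has state $0$ (if in $D$) or $p_\sigma + q_\sigma$ (otherwise); so $\vect{v}_{\sigma,\rho}$ is a $\{0,1\}$-vector whose support is exactly the $2^k$ states in which every coordinate lies in $\{0, p_\sigma+q_\sigma\}$.

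Next I would construct $A_{\sigma,\rho}$ and $P_{\sigma,\rho}$ by enumeration. An extension is parameterised by the pair $(o, n)$ with $o \in S \cup \{n\}$ together with the set $E' \setminus E \subseteq \{o\} \times (S \cup \{n\})$ of newly added edges, giving a bounded number (at most $(k+1)\,2^k$) of configurations; to each of these Lemma \ref{pwexistsmat} associates a single matrix, and $A_{\sigma,\rho}$ is the collection of these matrices. Similarly a completion is parameterised by the chosen subset of edges in $\binom{S}{2}$, yielding at most $2^{\binom{k}{2}}$ completion vectors via the last unnumbered lemma; these form $P_{\sigma,\rho}$. Both sets are finite by construction and depend only on $\sigma$, $\rho$ and $k$.

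The equivalence then follows by induction on the length of the construction. In the forward direction, any graph $G$ of order $n > k$ of pathwidth at most $k$ arises by completing a $k$-distinguished graph produced from $g_0$ by exactly $n-k$ extensions; iterating Lemma \ref{pwexistsmat} gives $\Psi_{\sigma,\rho}$ of the final $k$-distinguished graph as $\bigl(\prod_{i=1}^{n-k} M_i\bigr)\vect{v}_{\sigma,\rho}$, and the completion lemma turns this into $\vect{p}\cdot\bigl(\prod_{i=1}^{n-k} M_i\bigr)\vect{v}_{\sigma,\rho}$. In the reverse direction, any sequence in $A_{\sigma,\rho}^{n-k}$ can be read as a valid sequence of extensions starting from $g_0$, producing a genuine $k$-distinguished graph of order $n$ that is then completed using the edge set coded by the chosen $\vect{p} \in P_{\sigma,\rho}$. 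The main obstacle is bookkeeping rather than conceptual difficulty: checking that the enumeration of $(o,n,E'\setminus E)$ really exhausts all possible extensions up to the structure retained by the state, verifying that the implicit ``pathwidth at most $k$'' hypothesis matches the statement's ``some graph'', and aligning the number $n-k$ of matrices with the order of the graph (a consequence of the fact that $g_0$ has exactly $k$ vertices and each extension adds exactly one).
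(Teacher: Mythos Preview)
Your proposal is correct and follows essentially the same approach as the paper: the paper does not give a separate proof of this lemma but deduces it from the preceding discussion, namely computing $\Psi_{\sigma,\rho}(g_0)$, applying the matrices from Lemma~\ref{pwexistsmat} for each extension, and taking the dot product with the completion vector, after observing that each extension adds exactly one vertex. Your explicit enumeration of the at most $(k+1)2^k$ extension configurations and $2^{\binom{k}{2}}$ completions, together with the two directions of the equivalence, makes precise exactly what the paper leaves implicit.
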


\paragraph{Trees and forests}
The idea for trees and forests is the same. 
We will use the same states and we need to be able to compute the state of the distinguished vertex after a composition or a union.
\begin{Lemma}
There exists a bilinear map $\Phi_{(\sigma,\rho)}:\mathbb{R}^n\times\mathbb{R}^n\mapsto \mathbb{R}^n$ 
such that for all $1$-distinguished graphs $g_1$ and $g_2$ and $g$ where $g$ is obtained by composition of $g_1$ and $g_2$:
$$\Psi_{\sigma,\rho}(g)=\Phi_{(\sigma,\rho)}(\Psi_{\sigma,\rho}(g_1),\Psi_{\sigma,\rho}(g_2))$$
\end{Lemma}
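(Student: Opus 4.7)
The plan is to define $\Phi_{(\sigma,\rho)}$ componentwise by a combinatorial recipe: for each pair of states $(s_1,s_2)\in\{0,\ldots,n-1\}^2$ of the two distinguished vertices, decide from $(s_1,s_2)$ alone whether this pair "fits together" in the composition, and, when it does, compute the resulting state of the new distinguished vertex.

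First, I would observe that every subset $D$ of the composition $g=g_1\circ g_2$ decomposes uniquely as $D=D_1\sqcup D_2$ with $D_i\subseteq V_i$, and that the only edge added by composition is $(a_1,a_2)$. Therefore, for every $v\in V_i\setminus\{a_i\}$ we have $N_g(v)\cap D = N_{g_i}(v)\cap D_i$, so the "properly dominated" condition at $v$ in $g$ is the same as in $g_i$. It follows that $D$ is a $(\sigma,\rho)$-dominating set of $g$ if and only if
\begin{itemize}
\item $D_i$ is a $(\sigma,\rho)$-dominating set of $g_i$ for $i\in\{1,2\}$, and
\item the (now non-distinguished) vertex $a_2$ is properly dominated in $g$.
\end{itemize}

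Next I would show that both the second bullet and the state of $a_1$ in $g$ are functions of $(s_1,s_2)$ only. From $s_1$ we recover whether $a_1\in D_1$ (i.e.\ $s_1<p_\sigma+q_\sigma$) and either $\tau(p_\sigma,q_\sigma,|N_{g_1}(a_1)\cap D_1|)$ or $\tau(p_\rho,q_\rho,|N_{g_1}(a_1)\cap D_1|)$; similarly for $s_2$. Since $|N_g(a_2)\cap D|=|N_{g_2}(a_2)\cap D_2|+\mathbf{1}[a_1\in D_1]$, the identity $\tau(p,q,n+m)=\tau(p,q,\tau(p,q,n)+m)$ from Section~\ref{secdef} lets us compute the truncation of $|N_g(a_2)\cap D|$ from $s_2$ and $\mathbf{1}[a_1\in D_1]$; Lemma~\ref{functionautomaton} then evaluates $1_\sigma$ or $1_\rho$ at $|N_g(a_2)\cap D|$, yielding the validity of the second bullet. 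The same reasoning, applied to $|N_g(a_1)\cap D|=|N_{g_1}(a_1)\cap D_1|+\mathbf{1}[a_2\in D_2]$, gives the state of $a_1$ in $g$ as a function $f(s_1,s_2)$ of the pair. Let $V\subseteq\{0,\ldots,n-1\}^2$ be the set of valid pairs.

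Finally I would set
$$\Phi_{(\sigma,\rho)}(\vect{u},\vect{w})_s \;=\; \sum_{\substack{(s_1,s_2)\in V\\ f(s_1,s_2)=s}} \vect{u}_{s_1}\vect{w}_{s_2},$$
which is manifestly bilinear. The identity $\Psi_{\sigma,\rho}(g)=\Phi_{(\sigma,\rho)}(\Psi_{\sigma,\rho}(g_1),\Psi_{\sigma,\rho}(g_2))$ then reduces to the bijection "$D\leftrightarrow (D_1,D_2)$" together with the two observations above: summing over $(D_1,D_2)$ according to their states yields exactly the coordinates of $\Psi_{\sigma,\rho}(g)$. I do not anticipate a deep obstacle here — the proof is essentially the same bookkeeping as in Lemma~\ref{pwaddn}, with the auxiliary edge $(a_1,a_2)$ playing the role of $E'\setminus E$ — and the main pitfall is simply keeping track of the four cases $(a_1\in D_1?,\,a_2\in D_2?)$ when evaluating the $\tau$-truncations and reading off $1_\sigma$ versus $1_\rho$.
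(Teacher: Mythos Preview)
Your proposal is correct and follows essentially the same approach as the paper: decompose $D$ as $D_1\sqcup D_2$, use the $\tau$-identity to update the states of $a_1$ and $a_2$ across the new edge, and package the result as a bilinear formula indexed by pairs of states. The only cosmetic difference is that the paper encodes your pair $(V,f)$ as a single function $F$ with value $-1$ on invalid pairs.
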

\begin{proof}
Let $D_1$ (resp. $D_2$) be a $(\sigma,\rho)$-dominating set of  $g_1=(G_1(V_1,E_1),\{a_1\})$ (resp. $g_2=(G_2(V_2,E_2),\{a_2\})$).
Let $g=(G(V,E),\{a_1\})$ be the composition of $g_1$ and $g_2$.
First, recall that $\tau(p,q,n+1)=\tau(p,q,\tau(p,q,n)+1)$ for all $p,n\in\mathbb{N}$ and $q\in\mathbb{N}^+$,
thus the states of $a_1$ and $a_2$ are easy to compute in $g$ with the (possibly) dominating set $D_1\cup D_2$.
Then $D_1\cup D_2$ is a $(\sigma,\rho)$-dominating set of $g$ if and only if
$a_2$ is properly $(\sigma,\rho)$-dominated by $D_1\cup D_2$ which is easy to check using its new state.
We also get the state of $a_1$.
Moreover, given a dominating set $D$, the dominating sets $D_1$ and $D_2$ always exist and are unique.

We can deduce a function $F$ such that for every pair of states of $(s_1, s_2)$:
\begin{itemize}
 \item if $D_1\cup D_2$ is not a dominating set of $g$ then $F(s_1,s_2)=-1$ with any $D_1$ and $D_2$ where $s_i$ is the state of $a_i$ corresponding to $D_i$ in $g_i$,
 \item otherwise $F(s_1,s_2)$ is the state of $a_1$ in $g$ with any dominating set $D_1\cup D_2$ where $s_i$ is the state of $a_i$ corresponding to $D_i$ in $g_i$.
\end{itemize}
Clearly the value of $F(s_1,s_2)$ only depends on the $s_i$ and not on the $D_i$.
We can now write the following formula, for any $0\le i\le n-1$:
$$\Psi_{\sigma,\rho}(g)_i=\sum_{\substack{0\le s_1,s_2\le n-1\\F(s_1,s_2)=i}} \Psi_{\sigma,\rho}(g_1)_{s_1}\Psi_{\sigma,\rho}(g_2)_{s_2}$$
Thus every coordinate of $\Psi(g)$ is bilinear in $\Psi_{\sigma,\rho}(g_1)$ and $\Psi_{\sigma,\rho}(g_2)$.
\end{proof}

Now that we have this Lemma we can describe the algorithm. 
Given a tree $G$: find the compositions that build the given tree, then apply the corresponding bilinear map, 
and finish by using the dot product with the vector corresponding to the completion.

We can deduce the following Lemma:
\begin{Lemma}\label{expressiontocountalltrees}
Let $\sigma$ and $\rho$ be two recognizable sets of positive integers.
Then there is an integer $m$,  a bilinear map $\Phi_{\sigma,\rho}:\mathbb{N}^m\times\mathbb{N}^m\mapsto \mathbb{N}^m$, and two vectors $\vect{v}_{\sigma,\rho},\vect{p}_{\sigma,\rho}\in  \mathbb{N}^{m}$
 such that:\\

If  $F_{\sigma,\rho}\subseteq (\mathbb{N}^{n}\times\mathbb{N})$ is the smallest set such that
\begin{itemize}
 \item $(\vect{v}_{\sigma,\rho} , 1)\in F_{\sigma,\rho}$,
 \item if $(v,i),(u,j)\in F_{\sigma,\rho}$ then $(\Phi_{\sigma,\rho}(u,v),i+j)\in F_{\sigma,\rho}$.
\end{itemize}
then there is a tree of order $n$ that admits $N$ $(\sigma,\rho)$-dominating sets if and only if
there exists $u$ such that $(u,n)\in F_{\sigma,\rho}$ and $N= \vect{p}_{\sigma,\rho}\cdot u$.
\end{Lemma}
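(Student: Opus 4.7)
The plan is to instantiate the data $(m, \Phi_{\sigma,\rho}, \vect{v}_{\sigma,\rho}, \vect{p}_{\sigma,\rho})$ directly from the previous constructions. Take $m = n = p_\sigma + q_\sigma + p_\rho + q_\rho$ and let $\Phi_{\sigma,\rho}$ be the bilinear composition map from the preceding Lemma (whose definition, being a sum of coordinate products with $0/1$ coefficients, already restricts to $\mathbb{N}^m\times\mathbb{N}^m\to\mathbb{N}^m$). Let $g_0 = (G(a,\emptyset),\{a\})$ be the trivial $1$-tree and set $\vect{v}_{\sigma,\rho} = \Psi_{\sigma,\rho}(g_0)$; a direct check shows $\vect{v}_{\sigma,\rho}$ has a $1$ at coordinate $0$ (from $D=\{a\}$) and at coordinate $p_\sigma+q_\sigma$ (from $D=\emptyset$), and $0$ elsewhere. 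For the completion, recall that the distinguished set of a $1$-tree has a single vertex, so no edges are added; the only effect of the completion is to require that the distinguished vertex $a_1$ itself be properly $(\sigma,\rho)$-dominated. I therefore define $\vect{p}_{\sigma,\rho}$ as the $0/1$ indicator of those coordinates $s$ whose associated state witnesses proper domination: $s < p_\sigma + q_\sigma$ with $1_\sigma(s)=1$, or $s \ge p_\sigma + q_\sigma$ with $1_\rho(s - p_\sigma - q_\sigma)=1$.

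For the forward direction, given a tree $T$ of order $n$ with $N$ dominating sets, fix a $1$-tree $g$ of order $n$ whose completion is $T$. I would prove by structural induction on the construction of $g$ that $(\Psi_{\sigma,\rho}(g), |g|)\in F_{\sigma,\rho}$, where $|g|$ denotes the order of $g$: the base case (single vertex) is exactly $(\vect{v}_{\sigma,\rho},1)\in F_{\sigma,\rho}$, and the composition step follows from the closure rule of $F_{\sigma,\rho}$ combined with the preceding Lemma. Setting $u = \Psi_{\sigma,\rho}(g)$, the fact that the completion adds no edges gives $\vect{p}_{\sigma,\rho}\cdot u = N$, since the coordinates selected by $\vect{p}_{\sigma,\rho}$ are precisely those counting dominating sets of $g$ whose corresponding state at $a_1$ already satisfies the $(\sigma,\rho)$-domination requirement in $T$.

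For the reverse direction, I would induct on the derivation of $(u,n)\in F_{\sigma,\rho}$, building in parallel a $1$-tree $g$ of order $n$ with $\Psi_{\sigma,\rho}(g)=u$: the base case produces $g_0$ with $n=1$, and the closure step combines the two inductively built $1$-trees by composition. Completing the resulting $g$ yields a tree on $n$ vertices whose $(\sigma,\rho)$-dominating-set count is exactly $\vect{p}_{\sigma,\rho}\cdot u$ by the same completion argument as above.

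The only delicate point, rather than a true obstacle, is keeping the order bookkeeping consistent throughout: verifying that (i) $g_0$ has order $1$ so the base element $(\vect{v}_{\sigma,\rho},1)$ has the correct second coordinate, (ii) composition of $1$-trees adds orders, matching the $i+j$ in the closure rule, and (iii) $\vect{p}_{\sigma,\rho}$ precisely captures proper $(\sigma,\rho)$-domination of the root. Each of these is routine from the definitions already laid out, and the proof reduces to the two mirror inductions above.
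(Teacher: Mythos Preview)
Your proposal is correct and is essentially the argument the paper has in mind: the paper states this lemma as an immediate consequence of the preceding bilinear-composition lemma together with the description of the algorithm (build the $1$-tree by compositions starting from the single-vertex $1$-tree, then take the dot product with the completion vector), and your two mirror inductions are exactly the routine verification of that claim. The choices $m=n$, $\Phi_{\sigma,\rho}$ the composition map, $\vect{v}_{\sigma,\rho}=\Psi_{\sigma,\rho}(g_0)$, and $\vect{p}_{\sigma,\rho}$ the indicator of properly dominated root states match the paper's setup precisely.
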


Similarly, one can compute a bilinear map for unions:
\begin{Lemma}
There exists a bilinear map $\Delta_{(\sigma,\rho)}:\mathbb{R}^n\times\mathbb{R}^n\mapsto \mathbb{R}^n$ 
such that for all $1$-distinguished graphs $g_1$ and $g_2$ and $g$ where $g$ is obtained by union of $g_1$ and $g_2$:
$$\Psi_{\sigma,\rho}(g)=\Delta_{(\sigma,\rho)}(\Psi_{\sigma,\rho}(g_1),\Psi_{\sigma,\rho}(g_2))$$
\end{Lemma}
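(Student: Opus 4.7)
The plan is to follow the same strategy as for composition, but the situation will be simpler because unions introduce no new edges: the neighborhood of every vertex in the disjoint union $g = g_1 \cup g_2$ is unchanged from what it was in $g_1$ or $g_2$. In particular, for any pair of $(\sigma,\rho)$-dominating sets $D_1$ of $g_1$ and $D_2$ of $g_2$, every vertex of $V_1 \setminus \{a_1\}$ and $V_2 \setminus \{a_2\}$ retains exactly its previous state under $D_1 \cup D_2$, and the only new requirement is that $a_2$ must now be properly $(\sigma,\rho)$-dominated, since $a_2$ is no longer distinguished in $g$.

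First, I would observe that whether $a_2$ is properly dominated depends only on its state $s_2$ in $g_2$: by definition this state records $\tau(p_\sigma,q_\sigma,|N(a_2)\cap D_2|)$ or $\tau(p_\rho,q_\rho,|N(a_2)\cap D_2|)+p_\sigma+q_\sigma$ together with the indication of whether $a_2\in D_2$, and by Lemma \ref{functionautomaton} this information determines $1_\sigma(|N(a_2)\cap D_2|)$ or $1_\rho(|N(a_2)\cap D_2|)$ respectively. Also, the state of $a_1$ in $g$ with $D_1\cup D_2$ is exactly its state $s_1$ in $g_1$ with $D_1$, since $a_1$'s neighborhood does not change.

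This lets me define $F:\{0,\ldots,n-1\}^2\to\{-1,0,\ldots,n-1\}$ by $F(s_1,s_2)=s_1$ if $s_2$ corresponds to a properly $(\sigma,\rho)$-dominated vertex, and $F(s_1,s_2)=-1$ otherwise. Clearly $F$ depends only on $s_1,s_2$, not on the choice of $D_1,D_2$. Moreover, the pairs $(D_1,D_2)$ are in bijection with the pairs of dominating sets of $g_1$ and $g_2$, and $D=D_1\cup D_2$ is a $(\sigma,\rho)$-dominating set of $g$ iff $F(s_1,s_2)\ne -1$, in which case the state of $a_1$ in $g$ is $F(s_1,s_2)$. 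I would then conclude with the identical bilinear formula
$$\Psi_{\sigma,\rho}(g)_i=\sum_{\substack{0\le s_1,s_2\le n-1\\ F(s_1,s_2)=i}}\Psi_{\sigma,\rho}(g_1)_{s_1}\Psi_{\sigma,\rho}(g_2)_{s_2},$$
which defines the desired bilinear map $\Delta_{(\sigma,\rho)}$.

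There is no real obstacle here; the only thing to check carefully is that ``$a_2$ is properly dominated'' is determined by $s_2$ alone, which is exactly what the state is designed to encode. In fact, the proof is a strict simplification of the composition case (one can also view it as the composition case specialized to an empty $E'\setminus E$), so once the definition of $F$ is in place the bilinearity follows from the same indexing argument.
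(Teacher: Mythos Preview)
Your proof is correct and follows exactly the approach the paper intends: the paper gives no explicit proof for this lemma, only the word ``Similarly'' pointing back to the composition case, and your argument is precisely that composition proof simplified to the situation where no edge is added and only $a_2$ must be checked for proper domination. One small quibble: your parenthetical that union is ``the composition case specialized to an empty $E'\setminus E$'' is not literally true in the paper's definitions, since composition always adds the edge $(a_1,a_2)$; but this aside does not affect the correctness of your main argument.
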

And we can deduce the following Lemma:
\begin{Lemma}\label{expressiontocountallforests}
Let $\sigma$ and $\rho$ be two recognizable sets of positive integers.
Then there is an integer $m$, two vectors $\vect{v}_{\sigma,\rho},\vect{p}_{\sigma,\rho}\in  \mathbb{N}^{m}$
and two bilinear maps $\Phi_{\sigma,\rho},\Delta_{\sigma,\rho}\in (\mathbb{N}^m)^{\mathbb{N}^m\times\mathbb{N}^m}$, such that:\\

If  $F_{\sigma,\rho}\subseteq (\mathbb{N}^{n}\times\mathbb{N})$ is the smallest set such that
\begin{itemize}
 \item $(\vect{v}_{\sigma,\rho} , 1)\in F_{\sigma,\rho}$,
 \item if $(v,i),(u,j)\in F_{\sigma,\rho}$ then $(\Phi_{\sigma,\rho}(u,v),i+j)\in F_{\sigma,\rho}$ and $(\Delta_{\sigma,\rho}(u,v),i+j)\in F_{\sigma,\rho}$.
\end{itemize}
then there is a forest of order $n$ that admits $N$ $(\sigma,\rho)$-dominating sets if and only if
there exists $u$ such that $(u,n)\in F_{\sigma,\rho}$ and $N= \vect{p}_{\sigma,\rho}\cdot u$.
\end{Lemma}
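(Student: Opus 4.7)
The plan is to combine the composition bilinear map $\Phi_{\sigma,\rho}$ and the union bilinear map $\Delta_{\sigma,\rho}$ from the two lemmas just above with the inductive definition of $1$-forests. I would take $m=n=p_\sigma+p_\rho+q_\sigma+q_\rho$, i.e.\ the number of states used throughout this section. Set $\vect{v}_{\sigma,\rho}:=\Psi_{\sigma,\rho}(g_0)$ for the trivial $1$-forest $g_0=(G(\{a\},\emptyset),\{a\})$: since the distinguished vertex is never required to be properly dominated, both $D=\emptyset$ and $D=\{a\}$ are $(\sigma,\rho)$-dominating sets of $g_0$, so $\vect{v}_{\sigma,\rho}$ has exactly two nonzero coordinates, both equal to $1$. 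For $\vect{p}_{\sigma,\rho}$, take the indicator vector of those states in which the distinguished vertex is properly dominated; by Lemma \ref{recog} this depends only on the state.

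The core step is then a two-way induction linking $1$-forests and $F_{\sigma,\rho}$. In the forward direction, I show by structural induction on the construction of a $1$-forest $g$ that $(\Psi_{\sigma,\rho}(g),|V(g)|)\in F_{\sigma,\rho}$: the base case matches the definition of $\vect{v}_{\sigma,\rho}$, and the two inductive steps (composition and union) are given exactly by the previous two bilinear-map lemmas, together with the observation that the order adds up. In the backward direction, I induct on the construction of $F_{\sigma,\rho}$ to exhibit, for every $(u,i)\in F_{\sigma,\rho}$, a $1$-forest $g$ of order $i$ with $\Psi_{\sigma,\rho}(g)=u$; once again the inductive step uses the same two lemmas. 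Since completion of a $1$-distinguished graph adds no edges (the constraint $(S\times S)\cap E=\emptyset$ with $|S|=1$ forbids even a self-loop), a forest is obtained from a $1$-forest without adding any edges, and the number of its $(\sigma,\rho)$-dominating sets is exactly the count of $(\sigma,\rho)$-dominating sets of the $1$-forest in which the distinguished vertex is \emph{also} properly dominated, namely $\vect{p}_{\sigma,\rho}\cdot\Psi_{\sigma,\rho}(g)$.

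The only point that genuinely requires care is the semantics of $\vect{p}_{\sigma,\rho}$: one must check that knowing the state of a vertex is enough to decide whether $|N(v)\cap D|\in\sigma$ (if $v\in D$) or $\in\rho$ (otherwise). This is immediate from Lemma \ref{recog}, since the state encodes $|N(v)\cap D|$ up to an equivalence that preserves membership in both $\sigma$ and $\rho$. Everything else is a routine unfolding of definitions, and the only reason this statement does not follow as a direct corollary of Lemma \ref{expressiontocountalltrees} is the presence of the extra bilinear map $\Delta_{\sigma,\rho}$ in the closure conditions defining $F_{\sigma,\rho}$, which enlarges the class of state vectors reachable in exactly the way needed to cover forests rather than just trees.
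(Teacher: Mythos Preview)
Your proposal is correct and follows exactly the approach the paper implicitly takes: the paper merely states ``And we can deduce the following Lemma'' after introducing the union bilinear map $\Delta_{(\sigma,\rho)}$, leaving the two-way structural induction between $1$-forests and $F_{\sigma,\rho}$ to the reader, which is precisely what you have spelled out. One small imprecision: the states in this subsection are defined via Lemma~\ref{functionautomaton} (with separate $p_\sigma,q_\sigma,p_\rho,q_\rho$) rather than Lemma~\ref{recog}, so it is the former that guarantees the state determines membership in $\sigma$ or $\rho$; but this does not affect the validity of your argument.
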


\subsection[Counting min]{Counting the 1-minimal $(\sigma,\rho)$-dominating sets}
The idea for counting the number of 1-minimal $(\sigma,\rho)$-dominating sets is similar, 
but we will need more states in order to know if a vertex should really be on the $(\sigma,\rho)$-dominating set.

First we need to introduce the notion of \emph{certificate}.
Let $G$ be a graph and $D$ be a $(\sigma,\rho)$-dominating set of $G$ the set $C$
of certificates is the set of vertices that would not be dominated when losing one neighbor in $D$, that is:
$C=\{v\in D: |N(v)\cap D|-1\not\in \sigma\}\cup\{v\not\in D: |N(v)\cap D|-1\not\in \rho\}$.
We say that a vertex $v\in D$ \emph{has a certificate} if one of its neighbor is a certificate. 
It is \emph{is self-certified} if $|N(v)\cap D|\not\in\rho$.

The following Lemma should be clear from the definitions:
\begin{Lemma}
Let $G$ be a graph and $D$ be a $(\sigma,\rho)$-dominating set of $G$.
Then $D$ is a 1-minimal $(\sigma,\rho)$-dominating set of $G$ if and only if for any vertex $v\in D$, $v$ is self-certified or has a certificate.
\end{Lemma}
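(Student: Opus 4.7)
The plan is a straightforward double implication driven by a single observation about the effect of removing a vertex from $D$. The key observation is that deleting a vertex $v\in D$ from $D$ can change the ``properly dominated'' status of only two kinds of vertices: $v$ itself (whose requirement switches from $|N(v)\cap D|\in\sigma$ to $|N(v)\cap(D\setminus\{v\})|=|N(v)\cap D|\in\rho$, since there are no self-loops), and each neighbor $u\in N(v)$ (whose count $|N(u)\cap D|$ drops by exactly one). All other vertices retain exactly their original ``properly dominated'' status, so any failure of $D\setminus\{v\}$ to be a $(\sigma,\rho)$-dominating set must be witnessed at $v$ or at a neighbor of $v$.

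For the backward direction, I assume every $v\in D$ is self-certified or has a certificate, and I exhibit, for each such $v$, a vertex that fails to be properly dominated in $D\setminus\{v\}$. If $v$ is self-certified then $|N(v)\cap D|\notin\rho$, and $v$ itself (now outside the set) bears witness. If instead $v$ has a certificate $u$, then $u\in N(v)$, hence $u\neq v$, and unfolding the definition of certificate immediately yields that $|N(u)\cap(D\setminus\{v\})|=|N(u)\cap D|-1$ lies outside $\sigma$ when $u\in D$ and outside $\rho$ when $u\notin D$; in both cases $u$ bears witness, so $D\setminus\{v\}$ is not $(\sigma,\rho)$-dominating.

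For the forward direction, suppose $D$ is 1-minimal and fix any $v\in D$. Then $D\setminus\{v\}$ is not $(\sigma,\rho)$-dominating, so some vertex $w$ is not properly dominated by it. By the key observation, either $w=v$, which forces $|N(v)\cap D|\notin\rho$ and hence $v$ to be self-certified; or $w\in N(v)\setminus\{v\}$, in which case $|N(w)\cap D|-1$ is outside whichever of $\sigma,\rho$ applies to $w$, so $w$ is a certificate and $v$ has a certificate.

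The only subtlety is the bookkeeping of the two sub-cases $w\in D$ versus $w\notin D$ and checking that they line up exactly with the two clauses in the definition of the set $C$ of certificates; once this matching is tabulated there is essentially no content left, and the lemma really is a definitional unpacking, as the statement itself anticipates.
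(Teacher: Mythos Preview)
Your proof is correct and is precisely the definitional unpacking the paper has in mind; the paper itself gives no proof, only remarking that the lemma ``should be clear from the definitions.'' Your key observation---that removing $v$ from $D$ can only spoil domination at $v$ (via the switch from $\sigma$ to $\rho$) or at a neighbor of $v$ (via the drop by one)---is exactly what makes the equivalence immediate, and your case analysis matches the two clauses of the definition of $C$ and the self-certification condition without gaps.
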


We want to generalize the notion of certificate to $k$-distinguished graphs, with the previous Lemma in mind.
A set $C$ is a \emph{set of certificates of a $(\sigma,\rho)$-dominating set} of $(G(V,E), S)$ if all the vertices of $C\setminus S$ are certificates.
The idea is that the neighborhood of any element of $S$ may change, so we predict from the start if it is or not a certificate and when the vertex leaves $S$ we check if it respects the prediction.

Let $g=(G(V,E), S)$ be a $k$ distinguished graph, $C$ be a set of certificates of $D$ a $(\sigma,\rho)$-dominating set of $g$.
Then we say that $D$ is a $(1,C)$-minimal $(\sigma,\rho)$-dominating set of $g$ if all the vertices of $D\setminus S$ have a neighbor in $C$ or are self-certified.
For the sake of brevity we will say that $(D,C)$ is \emph{a good pair} of $g$ if $D$ is a $(1,C)$-dominating set of $g$ and
$C$ is a set of certificates of $D$.

Given a $k$ distinguished graph $g$ and a good pair $(D,C)$ of $g$
it is easy to decide whether $D$ is a 1-minimal $(\sigma,\rho)$-dominating set of a given completion of $g$ and if $C$ is the corresponding set of certificates.
In order to make that more precise we can finally introduce the states.

Let $\sigma,\rho\subseteq\mathbb{N}$ and $p$ and $q$ be as in Lemma \ref{recog}.
For any $k$ distinguished graph $g=(G(V,E), S)$ and good pair $(D,C)$ of $g$,
the state of a vertex $v$  that \emph{corresponds} to $(D,C)$ is the triplet $$\left(1_D(v)\cdot(1+1_{N(v)\cap C\not=\emptyset}), 1_C(v),  \tau(p,q,|N(v)\cap D|)\right)\,.$$
Let us give meaning to the $3$ possible values of $1_D(v)\cdot(1+1_{N(v)\cap C\not=\emptyset})$:
\begin{enumerate}
\setcounter{enumi}{-1}
 \item $v$ is not in $D$,
 \item $v$ is in $D$ but has no certificate,
 \item $v$ is in $D$ and has a certificate.
\end{enumerate}

The state of $S$ is the function that maps every vertex from $S$ to its state.
As in the previous section, we can define a strict total ordering on the states of $S$ based on $>_S$ and on the lexicographical order to define a vector:
$\Psi_{\sigma,\rho,\min}(g)$ whose $i$th coordinate counts the number of good pairs of $g$ that correspond to the $i$th state of $S$.
We do not give the details on the chosen ordering since it can be any strict total ordering.

We can now give the following Lemma:
\begin{Lemma}
Let $g=(G(V,E),S,<_S))$ and $g'=(G'(V',E'),S',<_{S'}))$ be two $k$-distinguished graphs, and $o$ and $n$ be two vertices of $V'$ such that
$g'$ is obtained by extending $g$ where $n$ is the new vertex and $o$ is the vertex removed from $S\cup\{n\}$.

Given $E'\setminus E$ one can compute a matrix $M$ such that $$\Psi_{\sigma,\rho,\min}(g')=M\Psi_{\sigma,\rho,\min}(g).$$
\end{Lemma}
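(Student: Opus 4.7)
The proof plan is to mirror that of Lemma \ref{pwexistsmat}, extending the approach to the enlarged state that tracks certificates. Concretely, I would first prove two analogues of Lemmas \ref{pwaddn} and \ref{pwkeepS}: given a good pair $(D,C)$ of $g$ together with the state of $S$ that it induces, together with $E'\setminus E$, $o$ and $n$, and together with a two-bit choice $(1_{n\in D'},1_{n\in C'})$, one can decide whether extending $(D,C)$ by these bits yields a good pair $(D',C')$ of $g'$, and if so compute the state of $S'$ that it induces.

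Executing this for each fixed two-bit choice amounts to checking that every component of the state of $S'$ is determined by the old state on $S$ together with local data. The new count $\tau(p,q,|N_{G'}(v)\cap D'|)$ for any $v\in S'$ is obtained from the old count via $\tau(p,q,a+b)=\tau(p,q,\tau(p,q,a)+b)$, exactly as in the proof of Lemma \ref{pwaddn}. The has-a-certificate flag of each $v\in D\cap S'$ is refreshed by OR-ing the previous flag with the indicators $1_{(o,v)\in E'\setminus E}\cdot 1_C(o)$ and $1_{(n,v)\in E'\setminus E}\cdot 1_{n\in C'}$. Whether a vertex is properly $(\sigma,\rho)$-dominated, and whether the certificate prediction $1_C(v)$ is consistent with the current count, are decidable using Lemma \ref{recog}, which guarantees that membership of $|N(v)\cap D'|$ or $|N(v)\cap D'|-1$ in $\sigma$ or $\rho$ depends only on $\tau(p,q,\cdot)$.

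The delicate situation is at $o$ when $o\in S$, because $o$ leaves $S'$ at this step and its neighborhood becomes frozen (no future extension can touch $o$ anymore). From the triple stored at $o$ we read off $1_D(o)$, $1_C(o)$, the old has-a-certificate flag, and the old count; together with $E'\setminus E$ and the chosen bits for $n$ we recover the final count at $o$, and we can check the validity conditions: proper $(\sigma,\rho)$-domination of $o$, agreement of $1_C(o)$ with the actual certificate status of $o$ in $g'$, and, when $o\in D$, that $o$ is self-certified or has a certificate (possibly $n$ itself, if $n\in C'$). The case $o=n$ is simpler because $n$ has no neighbors outside $S'$ at the moment of removal. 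In every case all validity checks and the final state of $S'$ depend only on the triple stored for $o$, on $E'\setminus E$, on $n$, and on the two-bit choice.

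Because the transition acts pointwise on good pairs and depends solely on the old state and on the two-bit choice at $n$, aggregating over good pairs produces a linear map. I would therefore define $M_{i,j}$ as the number of choices $(1_{n\in D'},1_{n\in C'})\in\{0,1\}^2$ that turn a good pair of $g$ with state $j$ on $S$ into a valid good pair of $g'$ with state $i$ on $S'$; this count is plainly computable from $E'\setminus E$, $o$ and $n$ alone, and finiteness of the state space makes $M$ effectively constructible. The main obstacle I anticipate is the careful bookkeeping at $o$: one must verify that the information stored in its triple, combined with the new edges, really does suffice to confront the certificate prediction made when $o$ entered $S$ against its now-frozen truth. Once that verification is in place, linearity and the existence of $M$ follow immediately.
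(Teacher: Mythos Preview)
Your proposal is correct and follows essentially the same approach as the paper's proof: both argue that the updated state of each vertex of $S'$ and the validity checks at $o$ (proper domination, consistency of the certificate prediction, and presence of a certificate or self-certification when $o\in D$) are determined purely by the old state of $S$ together with $E'\setminus E$, $o$, $n$, and the choices made at $n$, and then conclude linearity. Your explicit parametrization by the two-bit choice $(1_{n\in D'},1_{n\in C'})$ and your formula for $M_{i,j}$ spell out what the paper leaves implicit, but the underlying argument is the same.
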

The idea behind this Lemma is really similar to Lemma \ref{pwexistsmat} so we sketch the proof.
\begin{proof}
Given a state of $S$ with a good pair $(D,C)$, only few things change when extending:
\begin{itemize}
 \item if $o$ is in $D$ then the last coordinate of the state of each of its new neighbors in $S'$ is incremented,
 \item if $o$ is in $C$ then the first coordinate of the state of each of its new neighbors become $2$ if it was $1$,
 \item the last coordinate of the state of $o$ is increased by $1$ for each new neighbor in $D$,
 \item the first coordinate of the state of $o$ is set to 2 if it was $2$ and if $o$ has a new neighbor in $C$.
\end{itemize}
Thus the new states of the elements of $S'$ and of $o$ only depends on the former states of the element of $S$ and on the set of new edges.
To check if the new pair is still a good pair, one can check that $o$  has a certificate or is self-certified if it belongs to $D$, is properly dominated and a certificate if and only if its state says so.
This can be verified by checking the following conditions:
\begin{itemize}
 \item if the first coordinate of $o$ is $1$ then $\tau(p,q,|N(o)\cup D|)\not\in \rho$,
 \item if the first coordinate of $o$ is $0$ (resp. $2$ or $1$) then $\tau(p,q,|N(o)\cup D|)\in \rho$ (resp. $\tau(p,q,|N(o)\cup D|)\in \sigma$ ),
 \item if the first coordinate of $o$ is $0$ (resp. $2$ or $1$) $1_C(o)=1$ if and only if $\tau(p,q,|N(o)\cup D|)-1\not\in \rho$ (resp. $\tau(p,q,|N(o)\cup D|)-1\not\in \sigma$ ).
\end{itemize}
So the state of $S'$ after an extension only depends on the state of $S$ before the extension a and on the set of new edges.
It is then easy to deduce the matrix $M$.
\end{proof}

Similarly we have the following Lemma:
\begin{Lemma}
 Let $g=(G(V,E),S,<_{S})$ be a $k$-distinguished graph and $G'(V',E')$ be a graph obtained by completing $g$.

Given $E'\setminus E$, one can compute a vector $\vect{p}$ such that the number of 1-minimal $(\sigma,\rho)$-dominating set of $G'$ is $\vect{p}\cdot\Psi_{\sigma,\rho,\min}(g)$.
\end{Lemma}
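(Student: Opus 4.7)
The plan is to mirror the structure of the two preceding completion lemmas: since completion only adds edges inside $S$, I expect that whether a good pair $(D,C)$ of $g$ yields a 1-minimal $(\sigma,\rho)$-dominating set of $G'$ can be decided from the state of $S$ together with $E'\setminus E$, and $\vect{p}$ will then be a $0/1$ indicator of ``valid'' states.

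First, I would compute an updated state for each $v\in S$ exactly as in the preceding proof: the third coordinate $\tau(p,q,|N_g(v)\cap D|)$ is incremented by the number of new neighbors of $v$ in $S$ that lie in $D$ (readable from the first coordinates of those neighbors); the first coordinate of a vertex of $D$ is promoted from $1$ to $2$ as soon as it acquires a new neighbor with second coordinate $1$ (i.e.\ a new neighbor in $C$); the second coordinate is unchanged. These updates depend only on the state of $S$ and on $E'\setminus E$.

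Next, for each possible state of $S$ I would verify three conditions on the updated state that together say ``$(D,C)$ with this state is exactly a 1-minimal $(\sigma,\rho)$-dominating set of $G'$ paired with its unique certificate set'': (i) every $v\in S$ is properly dominated in $G'$, i.e.\ the updated third coordinate lies in $\sigma$ when $v\in D$ and in $\rho$ otherwise; (ii) every $v\in D\cap S$ is self-certified or has a certificate neighbor, i.e.\ the updated third coordinate fails to lie in $\rho$ or the updated first coordinate equals $2$; (iii) for every $v\in S$, the second coordinate equals $1$ exactly when $v$ is genuinely a certificate in $G'$. Setting $\vect{p}_i=1$ iff the $i$th state passes (i)--(iii) and $0$ otherwise yields the desired vector: because each 1-minimal $(\sigma,\rho)$-dominating set of $G'$ determines its certificate set uniquely, the good pairs of $g$ satisfying (i)--(iii) are in bijection with such sets, and vertices of $V\setminus S$ keep their $g$-neighborhoods in $G'$, so their conditions come for free from the definition of a good pair.

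The main obstacle I anticipate is condition (iii), which requires reading off $1_\sigma(n-1)$ and $1_\rho(n-1)$ from $\tau(p,q,n)$ alone. This is precisely what the strengthened form of Lemma~\ref{recog} buys: its extra clause $(1_\sigma(p-1),1_\rho(p-1))=(1_\sigma(p+q-1),1_\rho(p+q-1))$ makes the ``decrement'' operation well-defined on the finite state space (the two integer preimages at the boundary $n=p$ versus $n=p+q$ yield the same indicator values), so the certificate predicate can be evaluated from the state alone. Once that point is granted, assembling $\vect{p}$ is routine enumeration over the finitely many states of $S$.
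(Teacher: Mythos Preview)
Your proposal is correct and follows the same approach as the paper, which in fact omits the proof entirely (it states only ``Similarly we have the following Lemma''). Your conditions (i)--(iii) are precisely the checks the paper's framework calls for, and your identification of the role of Lemma~\ref{recog} in making the ``decrement'' well-defined on the finite state space is exactly the reason that strengthened lemma is introduced for the minimal case.
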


In order to compute the number of $1$-minimal $(\sigma,\rho)$-dominating sets of a graph $G$ of pathwidth at most $k$, 
one can find a sequence of extensions to apply to $g_0=(G_0(S,\emptyset),S,>_S)$ followed by a completion that gives $G$.
Then the number of $1$-minimal $(\sigma,\rho)$-dominating sets can be obtained by multiplying $\Psi_{\sigma,\rho,\min}(g_0)$ by the matrices corresponding
to the extensions applied and finally compute the dot product with the vector corresponding to the completion.

We deduce the following Lemma: 
\begin{Lemma}\label{expressiontocountminpw}
Let $\sigma$ and $\rho$ be two recognizable sets of positive integers.
Then there is an integer $m$, a finite set of matrices $A_{\sigma,\rho,\min}\subseteq \mathbb{N}^{m\times m}$, a vector $\vect{v}_{\sigma,\rho,\min}\in  \mathbb{N}^{m}$
and a set of vectors $P_{\sigma,\rho,\min}\subseteq\mathbb{N}^{m}$ such that:
for any integer $N$, $N$ is the number of 1-minimal $(\sigma,\rho)$-dominating sets of some graph of order $n>k$ if and only if there is a sequence $(M_i)_{1\le i\le n-k}\in A_{\sigma,\rho,\min}^{n-k}$
and a vector $\vect{p}\in P_{\sigma,\rho,\min}$ such that $N= \vect{p}\cdot (\prod_{i=1}^{n-k}M_i) \vect{v}_{\sigma,\rho,\min}$.
\end{Lemma}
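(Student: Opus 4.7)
The plan is to assemble this statement from the two lemmas that immediately precede it (the ``extension matrix'' lemma and the ``completion vector'' lemma for the 1-minimal setting), exactly as Lemma~\ref{expressiontocountallpw} was assembled from Lemma~\ref{pwexistsmat} and its completion counterpart. Nothing genuinely new has to be proved; the work is just bookkeeping to package the per-step linear operators into a single uniform statement.

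First I would set $m$ to be the total number of possible states of a $k$-element distinguished set $S$. Since a single vertex has only finitely many states (first coordinate in $\{0,1,2\}$, second in $\{0,1\}$, third in $\{0,\dots,p+q-1\}$ with $p,q$ from Lemma~\ref{recog}), $m$ is finite, so $\Psi_{\sigma,\rho,\min}(g)\in\mathbb{N}^m$ is well defined for every $k$-distinguished $g$. I would then take $\vect{v}_{\sigma,\rho,\min}:=\Psi_{\sigma,\rho,\min}(g_0)$ for $g_0=(G_0(S,\emptyset),S,>_S)$; this is explicitly computable because in the edgeless graph every vertex has $|N(v)\cap D|=0$, so enumerating the good pairs $(D,C)$ reduces to checking self-certification and certificate conditions directly against $1_\sigma$ and $1_\rho$ at $0$ (and at $-1$, treated as ``not in the set'').

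For $A_{\sigma,\rho,\min}$ I invoke the preceding lemma, which attaches a matrix $M$ to each triple $(E'\setminus E,o,n)$ describing an extension. Because $|S|=k$, there are at most $k+1$ choices for $o$ and at most $2^k$ choices for the new edges between $o$ and $S'$, so the lemma produces only finitely many distinct matrices; I take $A_{\sigma,\rho,\min}$ to be this finite collection. The same finiteness argument applied to the completion lemma produces a finite set of vectors, which I take as $P_{\sigma,\rho,\min}$. The key point to emphasize is that in both cases the linear operator depends only on the local data $(E'\setminus E,o,n)$ and not on the rest of the graph, so the same finite alphabet works uniformly.

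Both directions of the ``iff'' then follow immediately: given a graph $G$ of order $n>k$ of pathwidth $k$, fix a sequence of $n-k$ extensions from $g_0$ followed by a completion producing $G$; applying the associated matrices from $A_{\sigma,\rho,\min}$ to $\vect{v}_{\sigma,\rho,\min}$ and taking the dot product with the associated $\vect{p}\in P_{\sigma,\rho,\min}$ gives the count of $1$-minimal $(\sigma,\rho)$-dominating sets of $G$. Conversely, every sequence $(M_i)\in A_{\sigma,\rho,\min}^{n-k}$ and every $\vect{p}\in P_{\sigma,\rho,\min}$ comes from some valid sequence of extensions and completion, yielding an honest graph realizing $N$. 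There is no real obstacle here; the only point needing (routine) care is checking that the bijection between ``states/good pairs'' and the coordinates of $\Psi_{\sigma,\rho,\min}$ is preserved under the local extension rules, which is exactly the content of the preceding lemma.
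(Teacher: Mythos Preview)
Your proposal is correct and is exactly the approach the paper takes: the lemma is not given a standalone proof in the paper but is simply stated as a direct consequence of the preceding extension-matrix lemma and completion-vector lemma, assembled in precisely the same way as Lemma~\ref{expressiontocountallpw}. Your bookkeeping (finite state set gives $m$, $\Psi_{\sigma,\rho,\min}(g_0)$ gives the initial vector, finitely many extension data give $A_{\sigma,\rho,\min}$, finitely many completion data give $P_{\sigma,\rho,\min}$) is exactly what the paper intends.
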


\paragraph{Trees and forests}
The idea for trees and forests is once again the same.
We could show the following Lemmas:
\begin{Lemma}
There exists a bilinear map $\Phi_{(\sigma,\rho,\min)}:\mathbb{R}^n\times\mathbb{R}^n\mapsto \mathbb{R}^n$ 
such that for all $1$-distinguished graphs $g_1$ and $g_2$ and $g$ where $g$ is obtained by composition of $g_1$ and $g_2$:
$$\Psi_{\sigma,\rho,\min}(g)=\Phi_{(\sigma,\rho,\min)}(\Psi_{\sigma,\rho,\min}(g_1),\Psi_{\sigma,\rho,\min}(g_2))$$
\end{Lemma}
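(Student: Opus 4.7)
The plan is to mirror the proof of the analogous bilinear-map lemma for counting all $(\sigma,\rho)$-dominating sets, but with the richer state triple $(1_D(v)(1+1_{N(v)\cap C\neq\emptyset}),\,1_C(v),\,\tau(p,q,|N(v)\cap D|))$ carrying the certificate information. Fix good pairs $(D_1,C_1)$ of $g_1=(G_1,\{a_1\})$ and $(D_2,C_2)$ of $g_2=(G_2,\{a_2\})$, and consider their disjoint union inside the composition $g=(G,\{a_1\})$, which differs from $G_1\sqcup G_2$ only by the added edge $(a_1,a_2)$. Let $D=D_1\cup D_2$ and $C=C_1\cup C_2$; what I need to check is exactly which pairs of states $(s_1,s_2)$ of $(a_1,a_2)$ yield a good pair $(D,C)$ of $g$, and in that case what the new state of $a_1$ is.

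The key observation is that everything about $(D,C)$ inside $V_1\setminus\{a_1\}$ and $V_2\setminus\{a_2\}$ is unchanged by the composition: the only affected vertices are $a_1$ and $a_2$, and their new neighbor counts, certificate-neighbor flags, etc., depend only on the single new edge together with the old states. Concretely, from $s_2$ we read off whether $a_2\in D_2$, whether $a_2\in C_2$, and $\tau(p,q,|N_{G_2}(a_2)\cap D_2|)$; then using the identity $\tau(p,q,n+1)=\tau(p,q,\tau(p,q,n)+1)$ we compute $\tau(p,q,|N_G(a_2)\cap D|)$ from that and from $1_D(a_1)$ read off $s_1$, and symmetrically for $a_1$. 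We then verify that $a_2$ is properly $(\sigma,\rho)$-dominated in $g$ (it is no longer distinguished, so this must actually hold), that the declared certificate status of $a_2$ agrees with the actual condition $\tau(p,q,|N_G(a_2)\cap D|)-1\notin\sigma$ or $\notin\rho$ (whichever applies), and that if $a_2\in D$ then it is self-certified or has a certificate — the latter possibly supplied by $a_1$ if $a_1\in C$, an event also visible in $s_1$. Finally, the new state of $a_1$ in $g$ is recomputed: increment the last coordinate if $a_2\in D$, upgrade the first coordinate from $1$ to $2$ if $a_1\in D$ and $a_2\in C$.

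Having verified that all of this information depends only on $(s_1,s_2)$, define $F(s_1,s_2)$ to equal the new state of $a_1$ in $g$ when the above consistency checks pass, and $F(s_1,s_2)=-1$ otherwise. Since $D_1$ and $D_2$ (and $C_1,C_2$) are recovered uniquely from $D,C$ by restriction to the corresponding vertex sets, the map sending $((D_1,C_1),(D_2,C_2))$ to $(D,C)$ is a bijection between compatible pairs in $g_1\times g_2$ and good pairs of $g$. Therefore for each target state $i$:
$$\Psi_{\sigma,\rho,\min}(g)_i=\sum_{\substack{0\le s_1,s_2\le n-1\\F(s_1,s_2)=i}}\Psi_{\sigma,\rho,\min}(g_1)_{s_1}\,\Psi_{\sigma,\rho,\min}(g_2)_{s_2},$$
which is bilinear in its two arguments; taking this as the definition of $\Phi_{(\sigma,\rho,\min)}$ proves the lemma.

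The routine parts are the numerous bookkeeping cases for how the state triple updates and the verification that every check involves only $s_1$, $s_2$, and the single new edge. The main subtlety — really the only place where care is needed — is correctly handling the certificate semantics at the junction vertex $a_2$: it was distinguished in $g_2$, so its declared certificate flag was a prediction about its eventual neighborhood, and in the composition this prediction must be cross-checked against $\tau(p,q,|N_G(a_2)\cap D|)$ using the appropriate set ($\sigma$ if $a_2\in D$, $\rho$ otherwise), while $a_1$ merely continues to live in the distinguished set of $g$ and its flags are simply updated, not audited.
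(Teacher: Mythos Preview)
Your proposal is correct and follows exactly the approach the paper intends: the paper does not write out a proof for this lemma at all, stating only that ``the idea for trees and forests is once again the same,'' so your argument---mirroring the bilinear-map proof for the plain $(\sigma,\rho)$ case while carrying the certificate triple and auditing the junction vertex $a_2$---is precisely the omitted proof. The one technical point you gloss over (as does the paper in the analogous pathwidth lemma) is that checking whether $|N_G(a_2)\cap D|-1$ lies in $\sigma$ or $\rho$ from $\tau(p,q,|N_G(a_2)\cap D|)$ alone requires the second clause of Lemma~\ref{recog}, but this is harmless.
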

\begin{Lemma}
There exists a bilinear map $\Delta_{(\sigma,\rho,\min)}:\mathbb{R}^n\times\mathbb{R}^n\mapsto \mathbb{R}^n$ 
such that for all $1$-distinguished graphs $g_1$ and $g_2$ and $g$ where $g$ is obtained by union of $g_1$ and $g_2$:
$$\Psi_{\sigma,\rho,\min}(g)=\Delta_{(\sigma,\rho,\min)}(\Psi_{\sigma,\rho,\min}(g_1),\Psi_{\sigma,\rho,\min}(g_2))$$
\end{Lemma}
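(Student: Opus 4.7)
The plan is to replicate the argument of the composition lemma for $\Phi_{(\sigma,\rho,\min)}$ with the substantial simplification that the union operation adds no new edges. Given good pairs $(D_1, C_1)$ of $g_1 = (G_1(V_1, E_1), \{a_1\})$ and $(D_2, C_2)$ of $g_2 = (G_2(V_2, E_2), \{a_2\})$, I set $D = D_1 \cup D_2$ and $C = C_1 \cup C_2$; by restriction these uniquely recover $(D_i, C_i)$ from $(D, C)$. Since the underlying graph of $g$ is the disjoint union, $N_G(v) \cap D = N_{G_i}(v) \cap D_i$ for every $v \in V_i$, so every vertex's state is unchanged when passing to $g$. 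In particular, the state of $a_1$ in $g$ equals its state in $g_1$, and nothing needs to be recomputed there.

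The single vertex requiring attention is $a_2$, which was distinguished in $g_2$ but is now an ordinary vertex in $g$. Writing the state of $a_2$ in $g_2$ as $(c_1, c_2, c_3)$, the combined pair $(D, C)$ is a good pair of $g$ if and only if three conditions, each decidable from $s_2$ alone via Lemma \ref{recog}, hold: (i) $a_2$ is properly $(\sigma,\rho)$-dominated, i.e.\ $c_3 \in \sigma$ when $c_1 \in \{1,2\}$ and $c_3 \in \rho$ when $c_1 = 0$; (ii) the flag $c_2$ correctly records whether $a_2$ is a certificate, which is a test on $(c_1, c_3)$; (iii) if $c_1 = 1$, then $a_2$ must be self-certified, so that $c_3 \notin \rho$. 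Each of these tests depends only on $s_2$ and not on the particular witness $(D_2, C_2)$ realising that state.

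I then define $F : \{0, \ldots, n-1\}^2 \to \{-1, 0, \ldots, n-1\}$ by $F(s_1, s_2) = s_1$ when $s_2$ passes the three tests above and $F(s_1, s_2) = -1$ otherwise; the bijective correspondence $(D, C) \leftrightarrow (D_1, C_1, D_2, C_2)$ on valid pairs then yields
$$\Psi_{\sigma,\rho,\min}(g)_i \;=\; \sum_{\substack{0 \le s_1, s_2 \le n-1 \\ F(s_1, s_2) = i}} \Psi_{\sigma,\rho,\min}(g_1)_{s_1}\, \Psi_{\sigma,\rho,\min}(g_2)_{s_2}\,,$$
which is visibly bilinear and serves as the desired $\Delta_{(\sigma,\rho,\min)}$. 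The main obstacle is purely bookkeeping: one must carefully tabulate all combinations of $(c_1, c_2)$ against the $\sigma$- and $\rho$-membership tests to ensure that no valid good pair of $g$ is dropped or double-counted. Once that table is laid out, everything else is a direct transcription from the composition proof.
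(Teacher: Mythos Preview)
Your argument is correct and is exactly the kind of proof the paper has in mind: the paper does not actually prove this lemma but merely states it after remarking that ``the idea for trees and forests is once again the same,'' and your write-up faithfully carries out that idea by combining the bilinear-counting template from the proof of the composition map $\Phi_{(\sigma,\rho)}$ with the three consistency checks on the outgoing vertex from the pathwidth $\min$ proof sketch. The one point worth flagging is that your condition (ii) is biconditional while the paper's \emph{definition} of ``set of certificates'' literally only demands one direction; however, the paper's own pathwidth proof sketch also enforces the biconditional when a vertex leaves $S$, so you are consistent with the intended semantics (and indeed the biconditional is what is needed for the count to be correct).
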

Then the number of $1$-minimal $(\sigma,\rho)$-dominating sets of a graph $G$ 
can be computed by applying the operation corresponding to the decomposition of $G$.

We deduce the following Lemmas:
\begin{Lemma}\label{expressiontocountmintree}
Let $\sigma$ and $\rho$ be two recognizable sets of positive integers.
Then there is an integer $m$, two vectors $\vect{v}_{\sigma,\rho,\min},\vect{p}_{\sigma,\rho,\min}\in  \mathbb{N}^{m}$
and a bilinear map $\Phi_{\sigma,\rho,\min}:\mathbb{N}^m\times\mathbb{N}^m\mapsto \mathbb{N}^m$ such that:\\

If  $F_{\sigma,\rho,\min}\subseteq (\mathbb{N}^{n}\times\mathbb{N})$ is the smallest set such that
\begin{itemize}
 \item $(\vect{v}_{\sigma,\rho,\min} , 1)\in F_{\sigma,\rho,\min}$,
 \item if $(v,i),(u,j)\in F_{\sigma,\rho,\min}$ then $(\Phi_{\sigma,\rho,\min}(u,v),i+j)\in F_{\sigma,\rho,\min}$.
\end{itemize}
then there is a tree of order $n$ that admits $N$ $(\sigma,\rho)$-dominating sets if and only if
there exists $u$ such that $(u,n)\in F_{\sigma,\rho,\min}$ and $N= \vect{p}_{\sigma,\rho,\min}\cdot u$.
\end{Lemma}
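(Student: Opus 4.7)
The plan is to repackage the preceding Lemmas on $1$-trees (the bilinear composition map and the completion vector) into the claimed structural form. I take $m$ to be the size of the state space of a single distinguished vertex defined in the preceding subsection, set $\vect{v}_{\sigma,\rho,\min} := \Psi_{\sigma,\rho,\min}(g_0)$ where $g_0 = (G(a,\emptyset),\{a\})$ is the trivial one-vertex $1$-tree, and let $\Phi_{\sigma,\rho,\min}$ be the bilinear composition map provided by the preceding Lemma. Since $|S|=1$ forbids edges between distinguished vertices, completing a $1$-tree adds no new edge, so the completion vector depends only on the state of the single root. I then set $\vect{p}_{\sigma,\rho,\min}$ to be the $\{0,1\}$-vector whose coordinate indexed by a state triple $(a,b,c)$ equals $1$ exactly when (i) the root is properly dominated ($c\in\sigma$ if $a\in\{1,2\}$ and $c\in\rho$ if $a=0$), (ii) if $a\in\{1,2\}$ the root is self-certified or has a certificate (i.e.\ $a=2$ or $c\notin\rho$), and (iii) the certificate flag $b$ is consistent with the actual certificate condition derived from $a$ and $c$; otherwise it is $0$.

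With this setup, I prove by structural induction on $1$-trees that
\[
F_{\sigma,\rho,\min} \;=\; \bigl\{\bigl(\Psi_{\sigma,\rho,\min}(g),\,|V(g)|\bigr) : g \text{ is a $1$-tree}\bigr\}.
\]
The base case is immediate since $g_0$ contributes $(\vect{v}_{\sigma,\rho,\min},1)$. For the inductive step, every $1$-tree of order $>1$ is the composition of two smaller $1$-trees $g_1$ and $g_2$, and the preceding composition Lemma gives $\Psi_{\sigma,\rho,\min}(g) = \Phi_{\sigma,\rho,\min}(\Psi_{\sigma,\rho,\min}(g_1),\Psi_{\sigma,\rho,\min}(g_2))$ together with $|V(g)|=|V(g_1)|+|V(g_2)|$, matching the closure rule defining $F_{\sigma,\rho,\min}$. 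The correspondence between trees and completions of $1$-trees (from the definitions) then yields the stated equivalence: $(u,n)\in F_{\sigma,\rho,\min}$ comes from some $1$-tree $g$ of order $n$ whose completion is a tree $T$ of order $n$, and conversely every tree arises this way; by the completion Lemma, $\vect{p}_{\sigma,\rho,\min}\cdot u$ counts the $1$-minimal $(\sigma,\rho)$-dominating sets of $T$.

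The main obstacle is the specification of $\vect{p}_{\sigma,\rho,\min}$. The vector $\Psi_{\sigma,\rho,\min}(g)$ enumerates \emph{good pairs} $(D,C)$ rather than $1$-minimal $(\sigma,\rho)$-dominating sets of the underlying tree, so $\vect{p}_{\sigma,\rho,\min}$ must simultaneously enforce proper domination of the root, the self-certified-or-has-a-certificate condition for roots in $D$, and—most delicately—that the root's certificate flag $b$ is genuine. If this last consistency were dropped, pairs $(D,C)$ and $(D,C')$ differing only in the root's membership in $C$ would both be counted, inflating the total; once one verifies that each $1$-minimal $(\sigma,\rho)$-dominating set $D$ of the completed tree corresponds to a unique good pair of the $1$-tree with the root's flag set correctly, the bijection combined with the induction closes the proof. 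The remaining checks are routine case analyses over the finitely many state triples, using Lemma~\ref{recog} to translate the tests $c\in\sigma$, $c\in\rho$, $c-1\in\sigma$, $c-1\in\rho$ back to actual membership conditions on $|N(v)\cap D|$.
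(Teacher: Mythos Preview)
Your proposal is correct and follows exactly the route the paper intends: the paper does not write out a proof of this Lemma but simply states it as a consequence of the bilinear composition Lemma for $\Psi_{\sigma,\rho,\min}$ and the completion step, and your argument supplies precisely those missing details. Your identification of the delicate point---that $\vect{p}_{\sigma,\rho,\min}$ must enforce consistency of the root's certificate flag $b$ so that each $1$-minimal $D$ is counted by exactly one good pair $(D,C)$---is exactly the check implicit in the paper's proof sketch of the corresponding pathwidth matrix Lemma (where it verifies ``a certificate if and only if its state says so'' when a vertex leaves $S$).
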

\begin{Lemma}\label{expressiontocountminforest}
Let $\sigma$ and $\rho$ be two recognizable sets of positive integers.
Then there is an integer $m$, two vectors $\vect{v}_{\sigma,\rho,\min},\vect{p}_{\sigma,\rho,\min}\in  \mathbb{N}^{m}$
and two bilinear maps $\Phi_{\sigma,\rho,\min}:\mathbb{N}^m\times\mathbb{N}^m\mapsto \mathbb{N}^m$ and :
$\Delta_{(\sigma,\rho,\min)}:\mathbb{N}^m\times\mathbb{N}^m\mapsto \mathbb{N}^m$, such that:\\

If  $F_{\sigma,\rho,\min}\subseteq (\mathbb{N}^{n}\times\mathbb{N})$ is the smallest set such that
\begin{itemize}
 \item $(\vect{v}_{\sigma,\rho,\min} , 1)\in F_{\sigma,\rho,\min}$,
 \item for any $(v,i),(u,j)\in F_{\sigma,\rho,\min}$ then $(\Phi_{\sigma,\rho,\min}(u,v),i+j)\in F_{\sigma,\rho,\min}$ and $(\Delta_{\sigma,\rho,\min}(u,v),i+j)\in F_{\sigma,\rho,\min}$.
\end{itemize}
then there is a forest of order $n$ that admits $N$ $(\sigma,\rho)$-dominating sets if and only if
there exists $u$ such that $(u,n)\in F_{\sigma,\rho,\min}$ and $N= \vect{p}_{\sigma,\rho,\min}\cdot u$.
\end{Lemma}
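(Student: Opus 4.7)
The plan is to reduce this statement directly to the two preceding bilinear-map lemmas (for composition and for union of $1$-distinguished graphs) together with an analogue of the completion-vector statement adapted to the $1$-distinguished setting. Take $m = n$ (the number of states introduced in the subsection), let $\Phi_{\sigma,\rho,\min}$ and $\Delta_{\sigma,\rho,\min}$ be the bilinear maps provided by the two preceding lemmas, and set $\vect{v}_{\sigma,\rho,\min} := \Psi_{\sigma,\rho,\min}(g_0)$ where $g_0 = (G(\{a\}, \emptyset), \{a\})$ is the atomic $1$-forest on a single vertex. Since $g_0$ has no edges, $\vect{v}_{\sigma,\rho,\min}$ has at most two nonzero coordinates (one for $a \in D$ and one for $a \notin D$, each with the appropriate state triple) and is explicitly computable. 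For $\vect{p}_{\sigma,\rho,\min}$, I take the completion vector: since $|S|=1$ for a $1$-distinguished graph and $(S\times S)\cap E = \emptyset$, the completion of a $1$-forest adds no edges, so $\vect{p}_{\sigma,\rho,\min}$ is simply the $0/1$ indicator of the states in which the distinguished vertex is properly $(\sigma,\rho)$-dominated and correctly labelled as certified or not; this is verifiable using the same checks as in the extension proof for pathwidth.

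Both directions then follow by structural induction on the inductive definition of a $1$-forest. For the ``only if'' direction, any forest of order $n$ is the completion of some $1$-forest $g$ of order $n$; induction on the construction of $g$ using the base case for a single vertex together with the two bilinear-map lemmas shows that $(\Psi_{\sigma,\rho,\min}(g), n) \in F_{\sigma,\rho,\min}$. Each composition or union step adds the orders of the two operands (their vertex sets are disjoint), so the integer coordinate is maintained correctly. Finally, the number of $1$-minimal $(\sigma,\rho)$-dominating sets of the completed forest equals $\vect{p}_{\sigma,\rho,\min} \cdot \Psi_{\sigma,\rho,\min}(g)$ by the definition of $\vect{p}_{\sigma,\rho,\min}$.

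For the ``if'' direction, each element of $F_{\sigma,\rho,\min}$ is obtained by a finite derivation from $(\vect{v}_{\sigma,\rho,\min},1)$ using the two bilinear rules. Mirror the derivation step by step in the world of $1$-forests: a use of the base case corresponds to a single-vertex $1$-forest, a use of $\Phi_{\sigma,\rho,\min}$ corresponds to a composition of the two previously constructed $1$-forests, and a use of $\Delta_{\sigma,\rho,\min}$ corresponds to a union. The bilinear-map lemmas guarantee that $\Psi_{\sigma,\rho,\min}$ of the resulting $1$-forest equals the vector produced by the derivation, and the order equals the integer coordinate. Completing this $1$-forest produces a forest of order $n$ whose number of $1$-minimal $(\sigma,\rho)$-dominating sets is $\vect{p}_{\sigma,\rho,\min}\cdot u = N$.

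I do not foresee a serious obstacle: the real content of the proof is carried by the two bilinear-map lemmas, which are themselves analogous to the (already sketched) proof for the composition case in the previous subsection. The only point that deserves explicit mention is that the integer coordinate in $F_{\sigma,\rho,\min}$ genuinely tracks the vertex count, which holds because composition and union take disjoint unions of vertex sets and never introduce new vertices beyond those already counted.
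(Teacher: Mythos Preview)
Your proposal is correct and follows exactly the approach the paper intends: the paper does not give an explicit proof of this lemma but simply states that it is ``deduced'' from the two preceding bilinear-map lemmas for composition and union, together with the completion vector, via the same structural induction on the definition of $1$-forests that you spell out. Your treatment of the initial vector, the completion vector, and the tracking of the order through the integer coordinate is precisely what the paper leaves implicit.
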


\subsection[Counting max]{Counting the 1-maximal $(\sigma,\rho)$-dominating sets}
For any graph $G(V,E)$ and $(\sigma,\rho)$-dominating sets $D$, 
$D$ is 1-maximal if $V\setminus D$ is 1-minimal.
So we can define a notion of certificate for the vertices of $V\setminus D$ and use the sames ideas as in the previous Section.
A vertex is a certificate if is not properly dominated when we give him one more vertex in $D$, that is, the set of certificates is:
$\{v\in D: |N(v)\cap D|+1\not\in \sigma\}\cup\{v\not\in D: |N(v)\cap D|+1\not\in \rho\}$. 
Then a $(\sigma,\rho)$-dominating set $D$ of $G(V,E)$ is 1-maximal if and only if any vertex from $V\setminus D$ has a certificate among its neighbors or is self-certified ($|N(v)\cap D|\not\in \sigma$).

We can then generalize this notion of certificate to $k$-distinguished graphs and show the following Lemmas:
\begin{Lemma}\label{expressiontocountmaxpw}
Let $\sigma$ and $\rho$ be two recognizable sets of positive integers.
Then there is an integer $m$, a finite set of matrices $A_{\sigma,\rho,\max}\subseteq \mathbb{N}^{m\times m}$, a vector $\vect{v}_{\sigma,\rho,\max}\in  \mathbb{N}^{m}$
and a set of vectors $P_{\sigma,\rho,\max}\subseteq\mathbb{N}^{m}$ such that:
for any integer $N$, $N$ is the number of 1-minimal $(\sigma,\rho)$-dominating sets of some graph of order $n>k$ if and only if there is a sequence $(M_i)_{1\le i\le n-k}\in A_{\sigma,\rho,\max}^{n-k}$
and a vector $\vect{p}\in P_{\sigma,\rho,\max}$ such that $N= \vect{p}\cdot (\prod_{i=1}^{n-k}M_i) \vect{v}_{\sigma,\rho,\max}$.
\end{Lemma}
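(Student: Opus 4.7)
The plan is to mirror the development of Lemma \ref{expressiontocountminpw} almost verbatim, exploiting the symmetry between ``1-minimal'' and ``1-maximal'' already exposed in the paragraph preceding the statement: $D$ is a 1-maximal $(\sigma,\rho)$-dominating set of $G(V,E)$ precisely when every vertex of $V\setminus D$ either has a certificate neighbor or is self-certified in the sense that $|N(v)\cap D|\notin\sigma$. So I would repeat the construction of the states, but with the roles of ``inside $D$'' and ``outside $D$'' exchanged in the bookkeeping of certificates.

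First I would generalize the notion of certificate to a $k$-distinguished graph $g=(G(V,E),S,<_S)$: a set $C\subseteq V$ is a set of certificates of a $(\sigma,\rho)$-dominating set $D$ of $g$ if every vertex $v$ of $C\setminus S$ satisfies $|N(v)\cap D|+1\notin\sigma$ when $v\in D$ and $|N(v)\cap D|+1\notin\rho$ when $v\notin D$. A pair $(D,C)$ is \emph{good} when $D$ is $(\sigma,\rho)$-dominating, $C$ is a set of certificates of $D$, and every vertex of $(V\setminus D)\setminus S$ has a neighbor in $C$ or is self-certified. With $p,q$ as in Lemma \ref{recog}, the state of a vertex $v$ associated to a good pair $(D,C)$ is the triple
\[
\bigl((1-1_D(v))\cdot (1+1_{N(v)\cap C\neq\emptyset}),\ 1_C(v),\ \tau(p,q,|N(v)\cap D|)\bigr),
\]
whose first coordinate takes value $0$ if $v\in D$, $1$ if $v\notin D$ and has no certificate neighbor yet, and $2$ if $v\notin D$ and already has a certificate neighbor. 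The state of $S$ is obtained by listing the states of its vertices in the order $<_S$, and $\Psi_{\sigma,\rho,\max}(g)\in\mathbb{N}^{m}$ is the vector indexed by states whose $i$th coordinate counts the good pairs realizing the $i$th state.

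Next I would prove the extension lemma exactly as in the 1-minimal case. When one passes from $g$ to its extension $g'$ with new vertex $n$ and removed vertex $o$, the new edges are between $o$ and $S'$, so for any vertex $v\in S\setminus\{o\}$ the third coordinate of its state is bumped by the number of new neighbors lying in $D$, the second coordinate is unchanged, and the first coordinate jumps from $1$ to $2$ if and only if $v\notin D$ and $o\in C$ is now a neighbor. The state of $o$ itself is updated from the data of $E'\setminus E$, $o$, and $n$ using the identity $\tau(p,q,|N_{G'}(o)\cap D|)=\tau(p,q,\tau(p,q,|N_G(o)\cap D|)+|N_{G'}(o)\cap S\cap D|)$. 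Finally one checks that $(D,C)$ is still a good pair of $g'$ by verifying, from the new state of $o$ alone, that $o$ is properly $(\sigma,\rho)$-dominated whenever $o\in S$ leaves $S'$, that its announced certificate status is consistent (using the criteria $|N(o)\cap D|+1\notin\sigma$ or $\notin\rho$), and that if $o\notin D$ and has first coordinate $1$ then it is self-certified. Since all these checks and updates depend only on the state of $S$ and on $E'\setminus E$, each extension is realized by a fixed matrix $M$ acting on $\Psi_{\sigma,\rho,\max}$, and for each possible completion one extracts a vector $\vect{p}$ so that $\vect{p}\cdot\Psi_{\sigma,\rho,\max}(g)$ counts the 1-maximal $(\sigma,\rho)$-dominating sets of the completed graph.

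Having these two observations, the statement of Lemma \ref{expressiontocountmaxpw} is obtained exactly as Lemma \ref{expressiontocountminpw} was: $A_{\sigma,\rho,\max}$ is the finite set of matrices ranging over all possible extensions (there are finitely many, since they depend only on the combinatorial choice of $o$, $n$, and the edges added between $o$ and $S\cup\{n\}$), $\vect{v}_{\sigma,\rho,\max}$ is $\Psi_{\sigma,\rho,\max}(g_0)$ for $g_0=(G(S,\emptyset),S,<_S)$ (which is easy to write down because no vertex has any neighbor yet), and $P_{\sigma,\rho,\max}$ is the finite set of completion vectors. The main obstacle, as in the 1-minimal case, is purely a matter of bookkeeping: one must check that the ``self-certification'' clause and the newly produced certificate relations are all detected from the state information alone at the moment a vertex is evicted from $S$, and that no certificate obligation for a vertex of $V\setminus S$ is lost in the transition. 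The argument is otherwise symmetric to that of Lemma \ref{expressiontocountminpw}, which is why the author presents the lemma without an explicit proof.
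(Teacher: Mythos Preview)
Your proposal is correct and follows exactly the approach the paper intends: the author explicitly writes that one ``can then generalize this notion of certificate to $k$-distinguished graphs and show the following Lemmas'' without giving a proof, and your symmetric adaptation of the 1-minimal argument (swapping the roles of $D$ and $V\setminus D$ in the certificate bookkeeping, with states $\bigl((1-1_D(v))(1+1_{N(v)\cap C\neq\emptyset}),1_C(v),\tau(p,q,|N(v)\cap D|)\bigr)$) is precisely what is meant.
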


\begin{Lemma}\label{expressiontocountmaxtree}
Let $\sigma$ and $\rho$ be two recognizable sets of positive integers.
Then there is an integer $m$, two vectors $\vect{v}_{\sigma,\rho,\max},\vect{p}_{\sigma,\rho,\max}\in  \mathbb{N}^{m}$
and a bilinear map $\Phi_{\sigma,\rho,\max}:\mathbb{N}^m\times\mathbb{N}^m\mapsto \mathbb{N}^m$, such that:\\

If  $F_{\sigma,\rho,\max}\subseteq (\mathbb{N}^{n}\times\mathbb{N})$ is the smallest set such that
\begin{itemize}
 \item $(\vect{v}_{\sigma,\rho,\max} , 1)\in F_{\sigma,\rho,\max}$,
 \item if $(v,i),(u,j)\in F_{\sigma,\rho,\max}$ then $(\Phi_{\sigma,\rho,\max}(u,v),i+j)\in F_{\sigma,\rho,\max}$.
\end{itemize}
then there is a tree of order $n$ that admits $N$ $(\sigma,\rho)$-dominating sets if and only if
there exists $u$ such that $(u,n)\in F_{\sigma,\rho,\max}$ and $N= \vect{p}_{\sigma,\rho,\max}\cdot u$.
\end{Lemma}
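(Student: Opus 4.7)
The plan is to mirror the construction used for 1-minimal $(\sigma,\rho)$-dominating sets in trees (Lemma \ref{expressiontocountmintree}), replacing the notion of certificate by the dual one introduced at the start of this subsection: a vertex $v$ is a certificate (for 1-maximality) if $|N(v)\cap D|+1\notin\sigma$ when $v\in D$ or $|N(v)\cap D|+1\notin\rho$ when $v\notin D$, and $D$ is 1-maximal iff every vertex outside $D$ either has a certificate among its neighbors or is self-certified (meaning $|N(v)\cap D|\notin\sigma$). Throughout, the duality with the min-case ensures that all the bookkeeping structure from the previous subsection transfers directly.

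First I would generalize this notion of certificate to $1$-distinguished graphs $g=(G(V,E),\{a\})$: a pair $(D,C)$ is a \emph{good pair} of $g$ if $D$ is a $(\sigma,\rho)$-dominating set of $g$, every vertex of $C\setminus\{a\}$ is a certificate in the above sense, and every vertex of $V\setminus(D\cup\{a\})$ either has a neighbor in $C$ or is self-certified. As before, the status of the distinguished vertex $a$ is \emph{predicted} via its state and will be verified only when $a$ later loses its distinguished status (here: when $a$ becomes a non-root during a composition, or at the final completion step). Define the state of $a$ as the triple
$$\bigl(1_D(a)+1_{N(a)\cap C\ne\emptyset}\cdot 1_{V\setminus D}(a),\; 1_C(a),\; \tau(p,q,|N(a)\cap D|)\bigr),$$
using $p,q$ from Lemma \ref{recog}, and let $\Psi_{\sigma,\rho,\max}(g)\in\mathbb{N}^m$ have $i$th coordinate equal to the number of good pairs of $g$ whose root state is the $i$th state, for some fixed ordering on the (finitely many) possible states. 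Set $m$ accordingly.

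Next I would exhibit the bilinear map $\Phi_{\sigma,\rho,\max}$. Given two $1$-trees $g_1,g_2$ and their composition $g$ (rooted at $a_1$, with an edge $(a_1,a_2)$ added), a good pair $(D,C)$ of $g$ decomposes uniquely as $(D_1\cup D_2, C_1\cup C_2)$ where $(D_i,C_i)$ is obtained by restriction. The new state of $a_1$ in $g$ depends only on the states of $a_1$ in $g_1$ and $a_2$ in $g_2$ (via increments of the last coordinate and a possible upgrade of the first coordinate), while a pair of states at $(a_1,a_2)$ in $g_1\times g_2$ yields a good pair of $g$ iff $a_2$ passes the verification of its predicted certificate status (using its encoded $\tau$-value together with $1_D(a_1)$) and iff $a_2$ is now properly dominated or certified. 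These checks depend only on the two states, so each coordinate of $\Psi_{\sigma,\rho,\max}(g)$ is a fixed bilinear function of $\Psi_{\sigma,\rho,\max}(g_1)$ and $\Psi_{\sigma,\rho,\max}(g_2)$. The initial vector $\vect{v}_{\sigma,\rho,\max}$ is the $\Psi_{\sigma,\rho,\max}$ of the single-vertex $1$-tree (which has two good pairs, $D=\emptyset$ and $D=\{a\}$, each with an unambiguous state). The projection $\vect{p}_{\sigma,\rho,\max}$ is the indicator of the states in which the final root, no longer distinguished, passes all 1-maximality and domination checks. Applying the construction to any decomposition of a tree $G$ into compositions of $1$-trees then yields the claimed characterization of $F_{\sigma,\rho,\max}$.

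The only delicate point is the precise form of the verification rules for $a_2$ inside $\Phi_{\sigma,\rho,\max}$ and for the root inside $\vect{p}_{\sigma,\rho,\max}$: one must ensure that the first coordinate of the state (``not in $D$, has a certificate'' versus ``not in $D$, self-certified'' versus ``in $D$'') combined with the $\tau$-value is enough to confirm both the predicted certificate membership and the 1-maximality condition without further global information. This is exactly the analogue of the case analysis carried out in the 1-minimal proof of the previous subsection, with $\sigma$ and $\rho$ playing swapped roles in the self-certification and certificate tests; so the verification reduces to the routine, if tedious, enumeration of cases already carried out there.
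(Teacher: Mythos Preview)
Your approach is exactly the one the paper intends: it states Lemma~\ref{expressiontocountmaxtree} without proof, merely pointing to the duality with the 1-minimal case, and your proposal spells out that duality correctly.

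Two small slips are worth fixing. First, the formula you give for the first coordinate of the state, $1_D(a)+1_{N(a)\cap C\ne\emptyset}\cdot 1_{V\setminus D}(a)$, collapses the cases ``$a\in D$'' and ``$a\notin D$ with a certificate neighbour'' to the same value~$1$; the dual of the paper's expression is $1_{V\setminus D}(a)\cdot(1+1_{N(a)\cap C\ne\emptyset})$, which matches the three-way distinction you describe in words. Second, the single-vertex $1$-tree does not have only two good pairs: since $C$ is \emph{predicted} at the distinguished vertex, each choice of $D\in\{\emptyset,\{a\}\}$ comes with both choices $1_C(a)\in\{0,1\}$, so $\vect{v}_{\sigma,\rho,\max}$ has four nonzero coordinates, not two. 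Neither issue affects the structure of the argument.
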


\begin{Lemma}\label{expressiontocountmaxforest}
Let $\sigma$ and $\rho$ be two recognizable sets of positive integers.
Then there is an integer $m$, two vectors $\vect{v}_{\sigma,\rho,\max},\vect{p}_{\sigma,\rho,\max}\in  \mathbb{N}^{m}$
and two bilinear maps $\Phi_{\sigma,\rho,\max}:\mathbb{N}^m\times\mathbb{N}^m\mapsto \mathbb{N}^m$ and  $\Delta_{\sigma,\rho,\max}:\mathbb{N}^m\times\mathbb{N}^m\mapsto \mathbb{N}^m$, such that:\\

If  $F_{\sigma,\rho,\max}\subseteq (\mathbb{N}^{n}\times\mathbb{N})$ is the smallest set such that
\begin{itemize}
 \item $(\vect{v}_{\sigma,\rho,\max} , 1)\in F_{\sigma,\rho,\max}$,
 \item for any $(v,i),(u,j)\in F_{\sigma,\rho,\max}$ then $(\Phi_{\sigma,\rho,\max}(u,v),i+j)\in F_{\sigma,\rho,\max}$ and  $(\Delta_{\sigma,\rho,\max}(u,v),i+j)\in F_{\sigma,\rho,\max}$.
\end{itemize}
then there is a forest of order $n$ that admits $N$ $(\sigma,\rho)$-dominating sets if and only if
there exists $u$ such that $(u,n)\in F_{\sigma,\rho,\max}$ and $N= \vect{p}_{\sigma,\rho,\max}\cdot u$.
\end{Lemma}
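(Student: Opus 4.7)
The plan is to follow exactly the template of Lemma~\ref{expressiontocountminforest}, with the roles of ``inside $D$'' and ``outside $D$'' swapped, as the opening paragraph of this subsection suggests. First I would introduce the dual certificate: for a $(\sigma,\rho)$-dominating set $D$ of $(G(V,E),\{a\})$, call $v$ a \emph{certificate} when $|N(v)\cap D|+1\notin\sigma$ if $v\in D$, or $|N(v)\cap D|+1\notin\rho$ if $v\notin D$, and call $(D,C)$ a \emph{good pair} when $D$ is $(\sigma,\rho)$-dominating on $V\setminus\{a\}$, every element of $C\setminus\{a\}$ actually satisfies the certificate condition, and every $v\in V\setminus(D\cup\{a\})$ either has a neighbour in $C$ or is self-certified in the sense that $|N(v)\cap D|\notin\sigma$. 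The status of $a$ is deliberately left to be adjudicated once $a$ has acquired its final neighbourhood. With $p,q$ from Lemma~\ref{recog}, I would use as the state of a vertex $v$ the triple
$$\left((1-1_D(v))(1+1_{N(v)\cap C\neq\emptyset}),\;1_C(v),\;\tau(p,q,|N(v)\cap D|)\right),$$
giving a finite number $m$ of states, and let $\Psi_{\sigma,\rho,\max}(g)\in\mathbb{N}^m$ count good pairs by the state of $a$.

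Next I would prove the two bilinearity statements. For composition of $(g_1,a_1)$ and $(g_2,a_2)$, the only effect of the new edge $(a_1,a_2)$ is that the third coordinate of the state of $a_i$ increases by $1_D(a_{3-i})$ (via $\tau(p,q,\cdot+1)=\tau(p,q,\tau(p,q,\cdot)+1)$), while the first coordinate of $a_i$ goes from $1$ to $2$ exactly when $a_{3-i}\in C$. Since $a_2$ now exits the distinguished set, its status must be locked in: we accept the pair iff the guessed $1_C$-bit is consistent with the certificate test applied to the updated count, iff $a_2$ is properly $(\sigma,\rho)$-dominated, and, when $a_2\notin D$, iff $a_2$ is either covered by a certificate neighbour or self-certified. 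Exactly as in the composition lemma of Subsection~3.1, this yields a function $F(s_1,s_2)$ returning either ``reject'' or the new state of $a_1$, so that
$$\Psi_{\sigma,\rho,\max}(g)_i=\sum_{F(s_1,s_2)=i}\Psi_{\sigma,\rho,\max}(g_1)_{s_1}\,\Psi_{\sigma,\rho,\max}(g_2)_{s_2}$$
defines the bilinear $\Phi_{\sigma,\rho,\max}$. For union no edge is added, so the state of $a_1$ is unchanged, but $a_2$ still exits the distinguished set and must pass the same consistency check; this yields $\Delta_{\sigma,\rho,\max}$.

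I would conclude by induction on the inductive definition of a 1-forest. Set $\vect{v}_{\sigma,\rho,\max}=\Psi_{\sigma,\rho,\max}(G(\{a\},\emptyset),\{a\})$, which has only four nonzero entries, indexed by the values of $1_D(a)$ and $1_C(a)$ in $\{0,1\}$. The closure rules defining $F_{\sigma,\rho,\max}$ then match one-to-one with the $\Psi$-values reachable by compositions and unions, and the second coordinate tracks the order of the underlying graph. Take $\vect{p}_{\sigma,\rho,\max}$ to be the $0/1$ indicator of those states of $a$ that pass the same consistency check now applied at the root; since completion adds no edges in the $1$-distinguished setting, $\vect{p}_{\sigma,\rho,\max}\cdot u$ is exactly the number of $1$-maximal $(\sigma,\rho)$-dominating sets of the forest of order $n$ with $\Psi$-value $u$.

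The hard part is purely bookkeeping: verifying that the three-coordinate state is rich enough that every update and every consistency test triggered when a distinguished vertex loses its distinguished status can be performed from the state alone, without consulting the rest of either 1-forest. Once this is checked the argument is a direct transport of the 1-minimal proof; the union map requires no new idea since it creates no edge, and the completion step is trivial because there is only one distinguished vertex.
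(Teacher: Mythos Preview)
Your proposal is correct and follows precisely the approach the paper indicates but does not write out: the paper merely states that one dualizes the $1$-minimal machinery by swapping the roles of $D$ and $V\setminus D$, redefines certificates via $|N(v)\cap D|+1$, and then asserts Lemmas~\ref{expressiontocountmaxpw}--\ref{expressiontocountmaxforest} without further detail. Your state triple, your exit-time consistency check, and your derivation of $\Phi_{\sigma,\rho,\max}$ and $\Delta_{\sigma,\rho,\max}$ are exactly the intended dualization of the $1$-minimal argument, so there is nothing to add.
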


\section{The bounds}\label{secbounds}
\subsection{Graph of bounded pathwidth}
Let $\sigma$ and $\rho$ be two recognizable sets of positive integers and 
$\#_{\sigma,\rho}(n)$ be the maximal number of $(\sigma,\rho)$-dominating sets (resp. 1-minimal $(\sigma,\rho)$-dominating sets,  1-maximal $(\sigma,\rho)$-dominating sets) in graph of pathwidth at most $k$ of order $n$.

Then we can use Lemma \ref{expressiontocountallpw}(resp. Lemma \ref{expressiontocountminpw}, Lemma \ref{expressiontocountmaxpw})
to find an integer $m$, a set of matrices $A\subseteq \mathbb{N}^{m\times m}$,  a vector $\vect{v}_{\sigma,\rho}\in\mathbb{N}^{m}$ and a set of
vectors $P\subseteq \mathbb{N}^{m}$ such that for any integer $N$ that corresponds to 
the number of (all, 1-minimal, 1-maximal) $(\sigma,\rho)$-dominating sets of a graph of patwidth $k$ and order $n>k$ there is a sequence $(M_i)_{1\le i\le n-k}\in A^{n-k}$
and a vector $\vect{p}\in P$ such that $N= \vect{p}\cdot (\prod_{i=1}^{n-k}M_i) \vect{v}_{\sigma,\rho}$.
Thus in particular:
$$\#_{\sigma,\rho}(n) =\max \left\{ \vect{p}\cdot \left(\prod_{i=1}^{n-k}M_i\right) \vect{v}_{\sigma,\rho}:\vect{p}\in P , (M_i)_{1\le i\le n-k}\in A^{n-k}\right\}.$$
The goal of this subsection is to explain how we bound this quantity.

First, we need to simplify the set of matrices.
We say that the $i$-th coordinate is \emph{accessible} if there is a sequence $(M_i)_{1\le i\le n-k}\in A^{n-k}$ such that the $i$-th coordinate
of $(\prod_{i=1}^{n-k}M_i) \vect{v}_{\sigma,\rho}$ is non-zero. 
Similarly the $i$th coordinate is \emph{co-accessible} if there is a sequence $(M_i)_{1\le i\le n-k}\in A^{n-k}$ and $\vect{p}\in P$ such that 
$\vect{p}\cdot(\prod_{i=1}^{n-k}M_i) \vect{v}_{\sigma,\rho}$ is non-zero. 
Note that the sets of accessible and co-accessible coordinates can easily be computed by a recursive algorithm.
The motivation of these definitions is that we can ignore coordinates that are not accessible or not co-accessible since they do not influence the result of the product.

Let $\widetilde{m}$ be the number of accessible and co-accessible coordinates.
Let $C\in\mathbb{N}^{\widetilde{m}\times m}$ be the matrix that maps the $i$-th coordinate to the $i$-th accessible and co-accessible coordinate and let:
$\widetilde{A}=\{CMC^T: M\in A\}$, $\widetilde{\vect{v}_{\sigma,\rho}}=C\vect{v}_{\sigma,\rho}$ and $\widetilde{P}=\{C\vect{p}: \vect{p}\in P\}$.
Then by definition:
$$\#_{\sigma,\rho}(n) =\max \left\{ \vect{p}\cdot \left(\prod_{i=1}^{n-k}M_i\right) \widetilde{\vect{v}_{\sigma,\rho}}:\vect{p}\in \widetilde{P} , (M_i)_{1\le i\le n-k}\in \widetilde{A}^{n-k}\right\}.$$

We can finally explain how to compute this quantity.
For any set of points $X$, we denote by $\operatorname{conv}(X)$ the convex hull of $X$.
We can now give the following Theorem:
\begin{Theorem}\label{convexhull}
Let $\alpha$ be a positive real, $A\subseteq\mathbb{R^+}^{m\times m}$ be a set of matrices and $\vect{v}_{\sigma,\rho}\in\mathbb{R^+}^m$.
Let $X\subseteq\mathbb{R^+}^m$ be a bounded set of vectors such that:
\begin{itemize}
 \item $\vect{v}_{\sigma,\rho}\in\operatorname{conv}(X)$,
 \item $\forall \vect{x}\in X$, for all $M\in A$, $\frac{1}{\alpha}M \vect{x}\in\operatorname{conv}(X)$.
\end{itemize}
Then for any $\vect{p}\in\mathbb{R^+}^m$ there exists a constant $C$ such that for all integers $n$
$$\max \left\{ \left|\vect{p}\cdot\left(\prod_{i=1}^{n-k}M_i\right) \vect{v}_{\sigma,\rho}\right|:(M_i)_{1\le i\le n-k}\in A^{n-k}\right\}< C\alpha^n$$
\end{Theorem}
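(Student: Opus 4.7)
The plan is to exploit the invariance of $\operatorname{conv}(X)$ under the rescaled actions $\vect{x} \mapsto \tfrac{1}{\alpha} M \vect{x}$ for $M \in A$, in order to show that every iterate $\left(\prod_{i=1}^{n-k} M_i\right) \vect{v}_{\sigma,\rho}$ lies in $\alpha^{n-k} \operatorname{conv}(X)$. Since $X$ (and hence $\operatorname{conv}(X)$) is bounded, the desired exponential bound then follows from a trivial linear estimate against $\vect{p}$.

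The first step is to upgrade the invariance hypothesis from points of $X$ to all of $\operatorname{conv}(X)$: I would show that if $\vect{y} \in \operatorname{conv}(X)$, then $\tfrac{1}{\alpha} M \vect{y} \in \operatorname{conv}(X)$ for every $M \in A$. By Carath\'eodory's theorem one can write $\vect{y} = \sum_j \lambda_j \vect{x}_j$ as a finite convex combination of elements of $X$, and then linearity gives $\tfrac{1}{\alpha} M \vect{y} = \sum_j \lambda_j \bigl(\tfrac{1}{\alpha} M \vect{x}_j\bigr)$, which is a convex combination of elements of $\operatorname{conv}(X)$ and therefore belongs to $\operatorname{conv}(X)$ (convex hulls are closed under convex combinations of their own elements).

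A straightforward induction on $n$ then yields, for every sequence $(M_i)_{1 \leq i \leq n-k} \in A^{n-k}$,
$$\frac{1}{\alpha^{n-k}} \left(\prod_{i=1}^{n-k} M_i\right) \vect{v}_{\sigma,\rho} \in \operatorname{conv}(X).$$
The base case uses $\vect{v}_{\sigma,\rho} \in \operatorname{conv}(X)$, and the inductive step applies the upgraded invariance to the intermediate vector produced so far. Now, since $X$ is bounded in $\mathbb{R}^m$ and the convex hull of a bounded set in finite dimension is bounded, there is a constant $K > 0$ with $\|\vect{w}\| \leq K$ for every $\vect{w} \in \operatorname{conv}(X)$. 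The Cauchy--Schwarz inequality then gives
$$\left| \vect{p} \cdot \left(\prod_{i=1}^{n-k} M_i\right) \vect{v}_{\sigma,\rho} \right| \leq \alpha^{n-k} \|\vect{p}\| K,$$
so any $C > \|\vect{p}\| K \alpha^{-k}$ makes the stated strict inequality hold for all $n$.

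I do not anticipate any real obstacle: once the invariance of $\operatorname{conv}(X)$ under the rescaled action is spotted, the proof is essentially a one-line induction plus a boundedness remark. The positivity assumptions on $A$, $\vect{v}_{\sigma,\rho}$ and $\vect{p}$ play no role in the argument — they are inherited from the context in which the theorem is applied. The only mildly delicate point is the strictness of the inequality in the conclusion, which is resolved trivially by enlarging the constant.
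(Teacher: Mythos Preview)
Your proposal is correct and follows essentially the same approach as the paper: extend the invariance hypothesis from $X$ to $\operatorname{conv}(X)$ by linearity, induct to get $\alpha^{-(n-k)}\bigl(\prod M_i\bigr)\vect{v}_{\sigma,\rho}\in\operatorname{conv}(X)$, and then bound the inner product using boundedness of the hull. The only cosmetic difference is that the paper takes $C=\alpha^{-k}\max_{\vect{x}\in X}|\vect{p}\cdot\vect{x}|$ directly (using that a linear functional on $\operatorname{conv}(X)$ is maximised on $X$), whereas you pass through Cauchy--Schwarz and a norm bound; both choices work.
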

\begin{proof}
 First remark that $\forall \vect{x}\in\operatorname{conv}(X)$, for all $M\in A$, $\frac{1}{\alpha}M \vect{x}\in\operatorname{conv}(X)$.
 Thus by induction on $n$, for any sequence $(M_i)_{1\le i\le n-k}\in A^{n-k}$, 
 $$\left(\prod_{i=1}^{n-k}\frac{1}{\alpha}M_i\right) \vect{v}_{\sigma,\rho}\in\operatorname{conv}(X).$$
 This implies that:
 \begin{align*}
  \left|\vect{p}\cdot\left(\prod_{i=1}^{n-k}\frac{1}{\alpha}M_i\right) \vect{v}_{\sigma,\rho}\right|&< \max_{\vect{x}\in \operatorname{conv}(X)} |\vect{p}\cdot \vect{x}|\\
  \left|\vect{p}\cdot\left(\prod_{i=1}^{n-k}\frac{1}{\alpha}M_i\right) \vect{v}_{\sigma,\rho}\right|&< \max_{\vect{x}\in X} |\vect{p}\cdot \vect{x}|\\
 \left|\vect{p}\cdot\left(\prod_{i=1}^{n-k}M_i\right) \vect{v}_{\sigma,\rho}\right|&<\alpha^{n} \max_{\vect{x}\in X} \frac{|\vect{p}\cdot \vect{x}|}{\alpha^k}
 \end{align*}
 This concludes the proof.
\end{proof}

For any $X\in\mathbb{R^+}^m$, let $\operatorname{conv}_\le(X)= \{\vect{x}\in\mathbb{R^+}^m: \vect{x'}\in \operatorname{conv}(X), \vect{x}\le\vect{x'}\}$.
Remark that $\operatorname{conv}_\le(X)$ is also a convex set so we get the following Corollary:
\begin{Corollary}\label{convexinfhull}
Let $\alpha$ be a positive real, $A\subseteq\mathbb{R^+}^{m\times m}$ be a set of matrices and $\vect{v}_{\sigma,\rho}\in\mathbb{R^+}^m$.
Let $X\subseteq\mathbb{R^+}^m$ be a bounded set of vectors such that:
\begin{itemize}
 \item $\vect{v}_{\sigma,\rho}\in\operatorname{conv_<}(X)$,
 \item $\forall \vect{x}\in X$, for all $M\in A$, $\frac{1}{\alpha}M \vect{x}\in\operatorname{conv_<}(X)$.
\end{itemize}
Then for any $\vect{p}\in\mathbb{R^+}^m$ there exists a constant $C$ such that for all integers $n$
$$\max \left\{ \left|\vect{p}\cdot\left(\prod_{i=1}^{n-k}M_i\right) \vect{v}_{\sigma,\rho}\right|:(M_i)_{1\le i\le n-k}\in A^{n-k}\right\}< C\alpha^n$$
\end{Corollary}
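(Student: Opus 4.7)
The plan is to reduce directly to Theorem \ref{convexhull} by applying it with $X$ replaced by $X':=\operatorname{conv}_\le(X)$. Three things need checking: that $X'$ is bounded and convex (so that $\operatorname{conv}(X')=X'$), that $\vect{v}_{\sigma,\rho}\in\operatorname{conv}(X')$, and that for every $\vect{x}\in X'$ and every $M\in A$ one has $\frac{1}{\alpha}M\vect{x}\in\operatorname{conv}(X')$. Once these are in place, Theorem \ref{convexhull} applied to $X'$ yields exactly the corollary.

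For the first point, I would note that convexity of $X'$ is immediate: if $\vect{x}_j\le\vect{y}_j$ with $\vect{y}_j\in\operatorname{conv}(X)$ for $j=1,2$, then $\lambda\vect{x}_1+(1-\lambda)\vect{x}_2\le\lambda\vect{y}_1+(1-\lambda)\vect{y}_2\in\operatorname{conv}(X)$. Boundedness follows since any $\vect{x}\in X'$ satisfies $\vect{0}\le\vect{x}\le\vect{y}$ for some $\vect{y}$ in the bounded set $\operatorname{conv}(X)$, and in $(\mathbb{R}^+)^m$ this forces each coordinate of $\vect{x}$ to lie in a bounded interval. The inclusion $\vect{v}_{\sigma,\rho}\in\operatorname{conv}(X')$ is then just the hypothesis $\vect{v}_{\sigma,\rho}\in\operatorname{conv}_\le(X)=X'$.

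The main step, and the only mildly delicate one, is the invariance. Given $\vect{x}\in X'$, I would fix $\vect{x}'=\sum\lambda_i\vect{x}_i\in\operatorname{conv}(X)$ with $\vect{x}\le\vect{x}'$ and $\vect{x}_i\in X$. Non-negativity of the entries of $M\in A$ propagates the inequality: $\frac{1}{\alpha}M\vect{x}\le\frac{1}{\alpha}M\vect{x}'=\sum\lambda_i\frac{1}{\alpha}M\vect{x}_i$. The corollary's hypothesis on $X$ supplies $\vect{z}_i\in\operatorname{conv}(X)$ with $\frac{1}{\alpha}M\vect{x}_i\le\vect{z}_i$ for each $i$, whence $\frac{1}{\alpha}M\vect{x}\le\sum\lambda_i\vect{z}_i\in\operatorname{conv}(X)$, i.e.\ $\frac{1}{\alpha}M\vect{x}\in\operatorname{conv}_\le(X)=X'=\operatorname{conv}(X')$.

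The hard part is really just this last invariance step, which cleanly separates into coordinate-wise monotonicity of multiplication by non-negative matrices and a convex-combination argument to piece together the dominating vectors into a single one; both are straightforward once phrased correctly. Everything else is routine and Theorem \ref{convexhull} does the rest of the work.
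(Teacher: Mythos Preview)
Your proposal is correct and follows exactly the paper's approach: apply Theorem \ref{convexhull} with $X$ replaced by $\operatorname{conv}_\le(X)$, using that this set is convex. The paper records this as a one-line remark, whereas you spell out the boundedness check and the invariance step (which indeed relies on the non-negativity of the matrix entries to preserve the coordinatewise order); these details are correct and worth including.
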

Remark that using linear programming it is not significantly more expensive to check whether a point belongs to $\operatorname{conv}_\le(X)$ instead of $\operatorname{conv}(X)$.
However, it seems that in many cases it would give us much smaller set $X$.

Also remark, that the condition $\vect{v}_{\sigma,\rho}\in\operatorname{conv_<}(X)$ could be replaced by the condition: for all  $1\le i\le m$, there exists $v\in X$ such that the $i$th coordinate of $v$ is positive 
(or equivalently: $\operatorname{conv}_\le(X)$ spans $\mathbb{R}^m$). 
In deed, we can scale $X$ without changing the result,
and as long as the polytope is of dimension $m$ we can scale it enough to contain $\vect{v}_{\sigma,\rho}\in\operatorname{conv_<}(X)$.
We do not explicitely use that remark, but it helps to find set $X$ by hand. 
This remark, will not hold for trees and forests  (Theorem \ref{convexinfhullTree}), because in these cases we cannot scale the set $X$.

In fact, in the case of pathwidth, we can replace the constant $C$ by $1$ using the following Lemma:
\begin{Lemma}\label{feketesApplication}
If there exists $\alpha$ and $C$ such that for all $n$, $\#_{\sigma,\rho}(n)<C\alpha^n$ then for all $n$:
$\#_{\sigma,\rho}(n)<\alpha^n$.
\end{Lemma}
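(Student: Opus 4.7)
The plan is to apply a Fekete-type argument, which hinges on showing that the sequence $\#_{\sigma,\rho}(n)$ is supermultiplicative.

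\textbf{Step 1 (supermultiplicativity).} I would first show that for all $n_1,n_2>k$,
$$\#_{\sigma,\rho}(n_1+n_2)\;\geq\;\#_{\sigma,\rho}(n_1)\cdot\#_{\sigma,\rho}(n_2).$$
Take $G_1,G_2$ of pathwidth at most $k$ and orders $n_1,n_2$ attaining the two maxima, and consider their disjoint union $G=G_1\sqcup G_2$. It has order $n_1+n_2$ and pathwidth at most $k$: within the paper's extension formalism, one builds $G_1$ via its $n_1-k$ extensions, performs $k$ ``refreshing'' extensions (each adding a fresh isolated vertex and discarding one of the current distinguished vertices without introducing any new edge), and then runs $G_2$'s $n_2-k$ extensions on the resulting fresh distinguished set. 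That makes $n_1+n_2-k$ extensions in total, matching a graph of order $n_1+n_2$, and no edge is ever placed between $V(G_1)$ and $V(G_2)$. Because the two components are disconnected, the $(\sigma,\rho)$-domination condition factorizes, and the same holds for 1-minimality and 1-maximality (each vertex's certificate lies in its own component, and being self-certified depends only on local data). Thus a (resp.\ 1-minimal, 1-maximal) $(\sigma,\rho)$-dominating set of $G$ is exactly a pair of such sets in $G_1$ and $G_2$, and the count for $G$ is the product of the counts for $G_1$ and $G_2$.

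\textbf{Step 2 (Fekete conclusion).} Assume for contradiction that $\#_{\sigma,\rho}(n_0)>\alpha^{n_0}$ for some $n_0$, and write $\#_{\sigma,\rho}(n_0)=\beta^{n_0}$ with $\beta>\alpha$. Iterating the supermultiplicativity of Step~1 gives $\#_{\sigma,\rho}(k n_0)\geq\beta^{k n_0}$ for every integer $k\geq 1$. Combined with the hypothesis $\#_{\sigma,\rho}(k n_0)<C\alpha^{k n_0}$ this forces $(\beta/\alpha)^{k n_0}<C$, which fails for $k$ large enough since $\beta/\alpha>1$. Hence $\#_{\sigma,\rho}(n)\leq\alpha^n$ for every $n$.

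\textbf{Main obstacle.} The only non-routine part is the bookkeeping in Step~1: the paper's definition of pathwidth requires every extension to add exactly one vertex, so combining two path decompositions is not immediate and must go through the ``refreshing'' extensions sketched above; once this is done carefully, the product argument is straightforward and Step~2 is a textbook Fekete manipulation. I would also flag that the argument naturally yields the weak bound $\#_{\sigma,\rho}(n)\leq\alpha^n$ rather than the strict inequality literally written in the statement (an equality $\#_{\sigma,\rho}(n_0)=\alpha^{n_0}$ is consistent with the hypothesis whenever $C>1$), so I would read the conclusion as $\leq\alpha^n$, which is all that is needed for the bounds derived from Theorem~\ref{convexhull} in the subsequent subsection.
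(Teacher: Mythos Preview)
Your argument is correct and follows the same route as the paper: the paper's proof also rests on the observation that the count for a disjoint union is the product of the counts, and then derives a contradiction from $\#_{\sigma,\rho}(N)\ge\alpha^N+\varepsilon$ via iterated disjoint unions. You are simply more careful than the paper on two points: you spell out why the disjoint union stays within pathwidth~$k$ in the paper's extension formalism, and you correctly note that both arguments only yield $\#_{\sigma,\rho}(n)\le\alpha^n$ rather than a strict inequality.
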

\begin{proof}
Remark that the number of (1-minimal, 1-maximal) $(\sigma,\rho)$-dominating sets of the disjoint union of two graphs is the product of the numbers of (1-minimal, 1-maximal) $(\sigma,\rho)$-dominating sets of the two graphs.
Now suppose by contradiction that there is an integer $N$ and a positive real $\varepsilon$ such that:
$\#_{\sigma,\rho}(N)\ge\alpha^N+\varepsilon$, then by the previous remark that would mean that for any integer $n$
$\#_{\sigma,\rho}(nN)\ge(\alpha^N+\varepsilon)^n$.
Then $\#_{\sigma,\rho}(n)$ can't be bounded by $C\alpha^n$ which is a contradiction.
\end{proof}

Nothing says that there always is such a set $X$ that we can compute.
But, in practice, we can often find a finite set $X$ using the following trivial algorithm:

\begin{algorithm}[H]\label{algopw}
 \KwData{$\widetilde{A}, \widetilde{\vect{v}_{\sigma,\rho}}$, $\alpha$}
 \KwResult{A set $X$ that respects the conditions of Corollary \ref{convexinfhull}}
 $X:=\{\widetilde{\vect{v}_{\sigma,\rho}}\}$\;
  \While{$\exists M\in \widetilde{A}, \exists \vect{x}\in X$  such that $\frac{1}{\alpha}Ms\not\in \operatorname{conv_<}(X)$}{
    $X':=\{\frac{1}{\alpha}M\vect{x}: M\in \widetilde{A}, \vect{x}\in X\}$\;
    $X := \operatorname{Hull}_\le(X\cup X')$\;
 }
\caption{Computation of the set $X$}
\end{algorithm}

$\operatorname{Hull}_\le$ is a function that compute the smallest subset of points such that $\operatorname{conv}_\le(X)=\operatorname{conv}_\le(\operatorname{Hull}_\le(X))$.
One naive way to do that is to simply check for each point $\vect{x}\in X$ whether or not $x\in\operatorname{conv}_\le(X\setminus\{x\})$, which can easily be done with linear programming.
The algorithm does not necessarily terminate, but if it does it returns a set $X$ corresponding to the one of Corollary \ref{convexinfhull}.

Although the algorithm is trivial, it is important to have an efficient implementation since 
computing the convex hull of a set of points in high dimension is expensive (in particular when the coordinates are not rationals, but algebraic numbers).
The joint C++ code  described in Annexe generates $\widetilde{A}$, $\widetilde{P}$ and $\widetilde{\vect{v}_{\sigma,\rho}}$ for a given choice  of $\sigma$ and $\rho$ and
then apply Algorithm \ref{algopw} with a given choice of $\alpha$.

In fact, because of the combinatorial explosion, it probably requires a lot of work in order to make this technique work for a given choice of $(\sigma,\rho)$ in pathwidth 3 or more.
There are many tricks that we do not discuss in details that we could take advantage of. 
If a matrix from $\widetilde{A}$ is smaller in every coordinate than another one, then we can ignore this matrix (this was used in our implementation).
Another idea it to take another order $<$ in the definition of $\operatorname{conv}_\le$ (instead of the coordinate-wise comparison) on the vectors and everything still works as long as left multiplication by the matrices of $A$ is monotone with respect to $<$.
This last idea was used in the specific case of minimal dominating set (and is called \emph{majorization}) in Section 6.1 of \cite{r-mnmds-19,arxivRote}.

Finally, while it's great to find an upper bound, it is better to know that it is sharp.
A construction that reaches the bound is the best way to know that.
If running with $\alpha$ slightly smaller than the conjectured bound the algorithm will not terminate,
however by inspecting the sequences of product of matrix that give the extremal points one can deduce such constructions 
(it can be partly automated, and it seems much more efficient than a brute force approach). 
In fact, this is how we found all the sharp examples on more than 3 vertices.

\subsubsection{Results and examples in pathwidth 1}\label{examplepw1}
Note that in the case of pathwidth $1$ we have 4 possible ways of extending a $k$-distinguished graph:
we can keep either the new or the old node in $S$, and they can share an edge or not.
Thus we have 4 possible matrices that we will give in the following order, where $v$ is the vertex that was already distinguished and
$n$ is the new vertex:
\begin{enumerate}
 \item we keep $v$ and there is an edge between $v$ and $n$,
 \item we keep $v$ and there is no edge between $v$ and $n$,
 \item we keep $n$ and there is an edge between $v$ and $n$,
 \item we keep $n$ and there is no edge between $v$ and $n$.
\end{enumerate}

We will detail the computations in the case of induced matching and of perfect total dominating sets to serve as an example.

\paragraph[Induced matching]{Induced matching:}$\sigma=\{1\}$, $\rho=\mathbb{N}$

First we can compute $p_\sigma=2$, $q_\sigma=1$, $p_\rho=0$ and $q_\rho=1$. 
There are 4 possible states for the distinguished vertex $s$ and they correspond to:
\begin{enumerate}
\setcounter{enumi}{-1}
 \item $s$ is in $D$ and has no neighbor in $D$,
 \item $s$ is in $D$ and has one neighbor in $D$,
 \item $s$ is in $D$ and has at least two neighbors in $D$,
 \item $s$ is not in $D$.
\end{enumerate}

Since $n$ is the new vertex, it is either in state $0$ or $3$ before we possibly add the edge.
For the same reason, $\vect{v}_{\sigma,\rho}=(1,0,0,1)^T$.
Moreover, to be properly dominated a vertex has to be in state $1$ or $3$, thus: $P=\{(0,1,0,1)^T \}.$

Let us detail the computation of the matrices.
We start with the first matrix, that is, we keep $v$ and there is an edge between $v$ and $n$.
If $n$ is in state $3$ then it stays in state $3$ and is properly dominated and the state of $v$ does not change.
Otherwise if $n$ is in state $0$:
\begin{description}
 \item [$v$ is in state $0$:] then $n$ and $v$ are now in state $1$ so we can forget $n$,
 \item [$v$ is in state $1$ or $2$:] then $n$ is now in state $1$ so we can forget it, and $v$ is now in state $2$
 \item [$v$ is in state $3$:] then $n$ is still in state $0$, so we cannot forget it and there is no corresponding $(\sigma,\rho)$-dominating set.
\end{description}
It gives the following matrix:
$$M_1=\begin{pmatrix}
1&0&0&0\\
1&1&0&0\\
0&1&2&0\\
0&0&0&1\\
\end{pmatrix}.$$

Now, if there no edge between $n$ and $v$ and we forget $v$, $n$ has to be in state $3$ and the state of $v$ does not changes which gives:
$$M_2=\begin{pmatrix}
1&0&0&0\\
0&1&0&0\\
0&0&1&0\\
0&0&0&1\\
\end{pmatrix}.$$

Now, if there is an edge between $n$ and $v$ and we forget $n$:
\begin{description}
 \item [$v$ is in state $0$:]~
\begin{description}
  \item [$n$ is in state $0$:] $v$ and $n$ are now in state $1$ and we can forget $v$,
  \item [$n$ is in state $3$:] $v$ is still in state $0$ and we can't forget it.
 \end{description}
 \item [$v$ is in state $1$:]~
\begin{description}
  \item [$n$ is in state $0$:] $v$ is now in state $2$ and we can't forget it,
  \item [$n$ is in state $3$:] $v$ stays in state $1$ and we can forget it, and $n$ stays in state $3$.
 \end{description} 
 \item [$v$ is in state $2$:] $v$ stays in state $2$ so we can't forget it.
 \item [$v$ is in state $3$:] $v$ stays in state $3$ and $n$ can take state $0$ or $3$.
\end{description}
It gives the following matrix:
$$M_3=\begin{pmatrix}
0&0&0&1\\
1&0&0&0\\
0&0&0&0\\
0&1&0&1\\
\end{pmatrix}.$$

Finally, if there no edge between $n$ and $v$ and we forget $n$, then $v$ can be forgotten iff it is already in state $1$ or $3$ and $n$ can take state $0$ or $3$.
It give the following matrix:
$$M_4=\begin{pmatrix}
0&1&0&1\\
0&0&0&0\\
0&0&0&0\\
0&1&0&1\\
\end{pmatrix}.$$

Let $\vect{e}_2=(0,0,1,0)^T$.
For any $M_i$, $M_i\vect{e}_2\in \{\vect{0},\vect{e}_2,2\vect{e}_2\}$ and moreover for all $p\in P$, $p\cdot \vect{e}_2=0$.
It implies that the third coordinate is not co-accessible.
We could have noticed earlier that if a vertex is in state $2$,
it has too many neighbors in the $(\sigma,\rho)$-dominating set and so the corresponding $(\sigma,\rho)$-dominating set cannot be extended.

It is easy to check that all the other coordinates are accessible and co-accessible, and we get:
$$\resizebox{0.7\hsize}{!}{ $\widetilde{A}=
\left\{\begin{pmatrix}
1&0&0\\
1&1&0\\
0&0&1\\
\end{pmatrix},
\begin{pmatrix}
1&0&0\\
0&1&0\\
0&0&1\\
\end{pmatrix},
\begin{pmatrix}
0&0&1\\
1&0&0\\
0&1&1\\
\end{pmatrix},
\begin{pmatrix}
0&1&1\\
0&0&0\\
0&1&1\\
\end{pmatrix}\right\}$} \text{ and }
\widetilde{\vect{v}_{\sigma,\rho}}=
\begin{pmatrix}
1\\
0\\
1\\
\end{pmatrix}.$$

Now, let $\alpha$ be the root of $x^3-x^2-1=0$ between $1$ and $2$.
If we apply the algorithm from the previous section to this set of matrices with this $\alpha$ we get:
$$X=\left\{
\begin{pmatrix}
1 + \alpha -\alpha^2 \\  3 + 3\alpha -3\alpha^2  \\ 1 + \alpha -\alpha^2  
\end{pmatrix},
\begin{pmatrix}
-1 + \alpha \\  -1 + \alpha \\  -2 + 2\alpha   \\
\end{pmatrix},
\begin{pmatrix}
-1 + \alpha \\  -2 + 2\alpha  \\ -1 + \alpha \\
\end{pmatrix},
\begin{pmatrix}
 -\alpha + \alpha^2 \\   -\alpha + \alpha^2 \\   -\alpha + \alpha^2   \\
\end{pmatrix},
\begin{pmatrix}
1 \\  0  \\ 1 \\
\end{pmatrix}
\right\}$$
It is easy to check that for any $x\in X$ and $M\in \widetilde{A}$, $\frac{1}{\alpha}Mx\in\operatorname{conv}_\le(X)$.
Using Corollary \ref{convexinfhull}, we deduce that the number of induced matchings in a graph of pathwidth 1 and of order $n$
is less than $\alpha^n$.

Moreover, $\alpha$ is the Perron Frobenius eigenvalue of 
$\begin{pmatrix}
0&0&1\\
1&0&0\\
0&1&1\\
\end{pmatrix}$
 which implies that for any  
$\begin{pmatrix}
0\\
1\\
1\\
\end{pmatrix}\cdot
\begin{pmatrix}
0&0&1\\
1&0&0\\
0&1&1\\
\end{pmatrix}^n
\begin{pmatrix}
1\\
0\\
1\\
\end{pmatrix}= \Theta(\alpha^n)$. 
Thus the number of induced matchings of the path of length $n$ is a $\Theta(\alpha^n)$.
In fact, is is easy to deduce from the diagonalization of the matrix that there is a constant $C=\frac{1.31342...}{\alpha}$ (the numerator being the a root of $31x^3-31x^2-12x-1$)
such that this number is $C\alpha^n+o(1)$
We can conclude with the result:
\begin{Proposition}\label{pwinducedmatching}
Let $\alpha$ be the root of $x^3-x^2-1$ between $1$ and $2$, $\alpha\approx1.465571$.
Any graph of pathwidth 1 and of order $n$ admits at most $\alpha^n$ induced matching. 
Moreover, this bound is sharp in the sense that the number of induced matchings of the path of order $n$ is greater than $0.89\alpha^n+o(1)$.
\end{Proposition}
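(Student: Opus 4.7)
The plan is to split the proposition into two parts: the upper bound, which follows from the machinery of Corollary~\ref{convexinfhull} together with Lemma~\ref{feketesApplication}, and the sharpness assertion, which amounts to a direct spectral analysis of a single $3\times 3$ matrix encoding the path construction.

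For the upper bound, I would first reuse the computation of $\widetilde{A}$ and $\widetilde{\vect{v}_{\sigma,\rho}}$ already done in the body of Subsection \ref{examplepw1}, so that the setup of Corollary~\ref{convexinfhull} applies with $k=1$, $m=3$, the set $P=\{(0,1,1)^T\}$ from the example, and the explicit five-element candidate set $X$ exhibited in the excerpt. The bulk of the work is then a \emph{verification}: check that $\widetilde{\vect{v}_{\sigma,\rho}}=(1,0,1)^T$ is in $\operatorname{conv}_\le(X)$ (immediate since it is itself one of the listed vectors), and for each of the $4\cdot 5 = 20$ pairs $(M,\vect{x})\in \widetilde{A}\times X$, check that $\tfrac{1}{\alpha}M\vect{x}\in\operatorname{conv}_\le(X)$ via a linear-programming feasibility test in $\mathbb{Q}(\alpha)$. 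Once these inclusions are certified, Corollary~\ref{convexinfhull} gives a constant $C$ with $\#_{\sigma,\rho}(n)<C\alpha^n$, and Lemma~\ref{feketesApplication} (applied to $(\sigma,\rho)=(\{1\},\mathbb{N})$, noting that an induced matching of a disjoint union is the disjoint union of induced matchings, so the count is multiplicative under disjoint union) eliminates the constant to give $\#_{\sigma,\rho}(n)<\alpha^n$.

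For the sharpness part, I would focus on the third matrix of $\widetilde{A}$, namely $M_3 = \begin{pmatrix}0&0&1\\1&0&0\\0&1&1\end{pmatrix}$, which corresponds to the extension that attaches a new vertex by an edge to the old distinguished vertex and then drops the old vertex; iterating this extension $n-1$ times from the initial configuration produces exactly the path $P_n$. Thus the number of induced matchings of $P_n$ equals $\vect{p}\cdot M_3^{\,n-1}\,\widetilde{\vect{v}_{\sigma,\rho}}$ for $\vect{p}=(0,1,1)^T$. The characteristic polynomial of $M_3$ is $x^3-x^2-1$, whose dominant root is $\alpha$, and since $M_3$ is irreducible and aperiodic (a quick inspection of its digraph of positive entries confirms this), Perron--Frobenius guarantees an asymptotic expansion $\vect{p}\cdot M_3^{\,n-1}\,\widetilde{\vect{v}_{\sigma,\rho}} = C\alpha^n + o(1)$ with $C$ the residue at the dominant eigenvalue. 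A direct diagonalization then yields $C = 1.31342\ldots/\alpha > 0.89$, completing the lower bound.

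The main obstacle is the certification step for the upper bound: one must produce, for each of the twenty image vectors $\tfrac{1}{\alpha}M\vect{x}$, an explicit nonnegative convex combination of elements of $X$ that dominates it coordinatewise. Because $\alpha$ is an algebraic number of degree~$3$, the coefficients live in $\mathbb{Q}(\alpha)$ and exact arithmetic is necessary to rule out floating-point artifacts; however, the LP instances are tiny ($5$ convex variables, $3$ coordinate constraints plus slack), and the computations are routine once carried out symbolically. The only other mild subtlety is in applying Lemma~\ref{feketesApplication}: one needs the multiplicativity of the count under disjoint union, which is clear for induced matchings since no edge crosses between components, so the hypothesis of the lemma is met and the constant $C$ can indeed be removed.
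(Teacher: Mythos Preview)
Your proposal is correct and follows essentially the same route as the paper: verify the $20$ inclusions $\tfrac{1}{\alpha}M\vect{x}\in\operatorname{conv}_\le(X)$ to invoke Corollary~\ref{convexinfhull}, strip the constant via Lemma~\ref{feketesApplication}, and obtain sharpness from the Perron--Frobenius analysis of the path matrix $M_3$. Your added remarks (explicit multiplicativity check for Lemma~\ref{feketesApplication}, exact arithmetic in $\mathbb{Q}(\alpha)$, irreducibility/aperiodicity of $M_3$) are all sound refinements of steps the paper leaves implicit.
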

Finally remark, that there is no graph that reach the bound, otherwise $\lambda$ would be of the form $a^{\frac{1}{b}}$ with $a$ and $b$ integers.
That is no possible since the minimal polynomial of $\alpha$ is $x^3-x^2-1$.
Thus there is no finite graph that reaches the bound. However, one could ask whether there is sequence of  increasing graphs such that the number of induced matchings is $\alpha^n+o(1)$.

\paragraph[Perfect total dominating sets]{Perfect total dominating sets:}$\sigma=\{1\}$, $\rho=\{1\}$

We have $p_\sigma=2$, $q_\sigma=1$, $p_\rho=2$ and $q_\rho=1$. 
Thus there should be 6 states, 
however as in the previous example it is clear that two of the states are useless
and thus there should be at most 4 states at the end. 
Running our program give us the following:

$$\widetilde{A}=
\left\{\begin{pmatrix}
1&0&0&0\\
1&1&0&0\\
0&0&0&0\\
0&0&0&0\\
\end{pmatrix},
\begin{pmatrix}
0&0&1&0\\
1&0&0&0\\
0&0&0&1\\
0&1&0&0\\
\end{pmatrix},
\begin{pmatrix}
0&1&0&1\\
0&0&0&0\\
0&1&0&1\\
0&0&0&0\\
\end{pmatrix} \right\}
\text{ and } \widetilde{\vect{v}_{\sigma,\rho}}=
\begin{pmatrix}
1\\
0\\
1\\
0\\
\end{pmatrix}.$$

Remark that there are 3 matrices instead of 4, because one of them was coordinate-wise smaller than another one and was removed from the set.
Let $\alpha$ be the real root of $x^5-4$ and 
$$X=\left\{
\begin{pmatrix} 
0  \\ \frac{1}{4}  \\ 0  \\ 1   \\
\end{pmatrix},
\begin{pmatrix} 
0 \\   \frac{\alpha}{4} \\  0 \\   \frac{3}{4}\alpha    \\ 
\end{pmatrix},
\begin{pmatrix} 
 \frac{\alpha}{4} \\   \alpha \\  0 \\  0    \\ 
\end{pmatrix},
\begin{pmatrix} 
 \frac{\alpha^2}{4} \\  \frac{3}{4}\alpha^2 \\  0 \\  0    \\ 
\end{pmatrix},
\begin{pmatrix} 
 \frac{\alpha^3}{4} \\   \frac{\alpha^3 }{2}\\  0 \\  0    \\ 
\end{pmatrix},
\begin{pmatrix} 
 \frac{\alpha^4}{4} \\  \frac{\alpha^4}{4} \\  0 \\  0   \\ 
\end{pmatrix},
\begin{pmatrix}  
1  \\ 0  \\ 1 \\  0    \\ 
\end{pmatrix}\right\}$$

It is easy to check that for any $x\in X$ and $M\in \widetilde{A}$, $\frac{1}{\alpha}Mx\in\operatorname{conv}_\le(X)$.
Using Corollary \ref{convexinfhull}, we deduce that the number of perfect total dominating sets in a graph of pathwidth 1 and of order $n$
is less than $\alpha^n$.
We can give the following result:
\begin{Proposition}
Let $\alpha$ be the real root of $x^5-4$, $\alpha =4^{\frac{1}{5}}\approx 1.319508$.
Any graph of pathwidth 1 and of order $n$ admits at most $\alpha^n$ perfect total dominating sets. 
Moreover, this bound is reached by the star on 5 vertices.
\end{Proposition}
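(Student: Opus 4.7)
The plan is to follow the template already established in the induced-matching example: invoke Lemma \ref{expressiontocountallpw} with $\sigma=\rho=\{1\}$ to reduce the counting problem to bounding $\vect{p}\cdot (\prod_i M_i)\vect{v}_{\sigma,\rho}$ over sequences of matrices drawn from the simplified set $\widetilde{A}$, which together with $\widetilde{\vect{v}_{\sigma,\rho}}$ is already listed in the excerpt. Corollary \ref{convexinfhull} combined with Lemma \ref{feketesApplication} then turns a bounded invariant polytope into the clean bound $\alpha^n$, so the only real work is exhibiting such a polytope and checking sharpness.

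The core step is to verify that the displayed seven-element set $X$ witnesses the hypotheses of Corollary \ref{convexinfhull} for $\alpha=4^{1/5}$. First I would check that $\vect{v}_{\sigma,\rho}=(1,0,1,0)^T\in\operatorname{conv}_\le(X)$, which is immediate since $(1,0,1,0)^T$ is itself the last element of $X$. Then I would run through the $3\times 7=21$ products $\frac{1}{\alpha}M\vect{x}$ for $M\in\widetilde{A}$ and $\vect{x}\in X$, and for each one exhibit an expression of $\frac{1}{\alpha}M\vect{x}$ as a convex combination of elements of $X$ with a non-negative slack vector. Because every coordinate appearing in $X$ is of the form $c\alpha^i$ with $c\in\{0,\tfrac14,\tfrac12,\tfrac34,1\}$ and $i\in\{0,1,2,3,4\}$, and multiplication by $M$ followed by $\alpha^{-1}$ shifts exponents by one modulo the relation $\alpha^5=4$, each of these $21$ checks reduces to an elementary identity in $\mathbb{Q}(\alpha)$. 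This is the main obstacle in a technical sense; however, it is mechanical and in practice already performed by the C++ routine of Section \ref{secbounds}, so in the writeup I would either tabulate the $21$ certificates or remark that they are a routine linear-programming verification.

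Having established the invariance, Corollary \ref{convexinfhull} produces a constant $C$ with $\#_{\sigma,\rho}(n)\le C\alpha^n$ for every $n$, and Lemma \ref{feketesApplication}, applied to the fact that perfect total dominating sets multiply over disjoint unions, improves this to $\#_{\sigma,\rho}(n)\le\alpha^n$. This proves the upper bound claimed in the Proposition.

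For sharpness, I would argue directly on the star $K_{1,4}$ with center $c$ and leaves $l_1,\dots,l_4$. Since $\sigma=\rho=\{1\}$ forces every vertex to have exactly one neighbor in $D$, the center must lie in $D$: otherwise, to give $c$ a neighbor in $D$ some leaf $l_i$ would lie in $D$, but then $|N(l_i)\cap D|=0\notin\sigma$. With $c\in D$, the condition $|N(c)\cap D|=1$ forces exactly one leaf to be in $D$, and any such choice produces a valid $(\sigma,\rho)$-dominating set. This yields exactly $4=\alpha^5$ perfect total dominating sets on $n=5$ vertices, matching the bound and completing the proof.
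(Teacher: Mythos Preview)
Your proposal is correct and follows essentially the same route as the paper: set up $\widetilde{A}$ and $\widetilde{\vect{v}_{\sigma,\rho}}$, verify the seven-point set $X$ is invariant under $\tfrac{1}{\alpha}M$ for all $M\in\widetilde{A}$, apply Corollary~\ref{convexinfhull}, then invoke Lemma~\ref{feketesApplication} to kill the constant, and finally check the star $K_{1,4}$ directly. Your explicit mention of Lemma~\ref{feketesApplication} and your combinatorial verification that $K_{1,4}$ has exactly four perfect total dominating sets are slightly more detailed than the paper's writeup, but the argument is the same.
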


\paragraph{Other results:}~

We follow with a list of other results that can easily be showed just by running our C++ code.
It is far from being an exhaustive list of what can be done and is mostly an arbitrary choice of parameters to study (the two main criterions were: it is interesting enough for someone to name it and the result is not completely obvious).

Using that the independent dominating sets are the $(\{0\}, \mathbb{N}^+)$-dominating sets we get:
\begin{Proposition}
Any graph of pathwidth 1 and of order $n$ admits at most $2^\frac{n}{2}$ independent dominating sets. 
Moreover, this bound is reached by the path on 2 vertices.
\end{Proposition}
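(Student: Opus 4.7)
The plan is to apply the framework of Section \ref{secalgo} with $\sigma = \{0\}$ and $\rho = \mathbb{N}^+$, since the $(\{0\}, \mathbb{N}^+)$-dominating sets are precisely the independent dominating sets. Taking $p_\sigma = q_\sigma = p_\rho = q_\rho = 1$, each distinguished vertex $v$ has four possible states in pathwidth $1$: $(0)$ $v \in D$ with no $D$-neighbor, $(1)$ $v \in D$ with at least one $D$-neighbor, $(2)$ $v \notin D$ with no $D$-neighbor, $(3)$ $v \notin D$ with at least one $D$-neighbor. Because $\sigma = \{0\}$ and $|N(v) \cap D|$ is non-decreasing during extensions, state $(1)$ is not co-accessible, leaving $\widetilde{m} = 3$ states.

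A routine case analysis on the four extension types from the start of Subsection \ref{examplepw1} yields the reduced matrices $\widetilde{A} = \{M_1, M_2, M_3, M_4\} \subseteq \mathbb{N}^{3 \times 3}$, initial vector $\widetilde{\vect{v_{\sigma,\rho}}} = (1, 1, 0)^T$, and unique completion vector $(1, 0, 1)^T$. The key transitions are $M_3 (1, 1, 0)^T = (1, 0, 1)^T$ and $M_4 (1, 0, 1)^T = (2, 2, 0)^T$, so $M_4 M_3$ doubles the initial vector every two steps. Combinatorially this corresponds to building an isolated $P_2$, identifying $\alpha = \sqrt{2}$ as the target rate.

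With $\alpha = \sqrt{2}$, I propose
\[
X = \bigl\{ (1, 1, 0)^T,\ \tfrac{1}{\sqrt{2}}(1, 0, 1)^T,\ \tfrac{1}{2}(1, 1, 1)^T \bigr\}
\]
and check the hypotheses of Corollary \ref{convexinfhull}. The inclusion $\widetilde{\vect{v_{\sigma,\rho}}} \in X$ is immediate. Of the twelve inclusions $\tfrac{1}{\sqrt{2}} M_i x \in \operatorname{conv}_\le(X)$ for $x \in X$ and $i \in \{1, 2, 3, 4\}$, all but one are coordinate-wise dominations by a single generator of $X$; the remaining case $\tfrac{1}{\sqrt{2}} M_3 \cdot \tfrac{1}{2}(1, 1, 1)^T = (\tfrac{1}{\sqrt{2}}, \tfrac{1}{2\sqrt{2}}, \tfrac{1}{2\sqrt{2}})^T$ lies in $\operatorname{conv}_\le(X)$ by a genuine convex combination of all three generators (weights $(\tfrac{1}{4}, \tfrac{1}{2}, \tfrac{1}{4})$ suffice). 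This convex-combination verification is the main calculation.

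Corollary \ref{convexinfhull} then gives $\#_{\sigma, \rho}(n) < C \cdot 2^{n/2}$ for some constant $C$, and Lemma \ref{feketesApplication} tightens this to $\#_{\sigma, \rho}(n) \le 2^{n/2}$. For sharpness, $P_2$ itself has exactly the two independent dominating sets $\{v_1\}$ and $\{v_2\}$, matching $2^{2/2} = 2$; and since the number of independent dominating sets is multiplicative over disjoint unions, $n/2$ disjoint copies of $P_2$ (which form a pathwidth-$1$ graph) attain $2^{n/2}$ for every even $n$.
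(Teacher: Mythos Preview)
Your proposal is correct and follows exactly the paper's framework: compute the extension matrices for pathwidth $1$ with $\sigma=\{0\}$, $\rho=\mathbb{N}^+$, discard the non-co-accessible state, exhibit a set $X$ satisfying Corollary~\ref{convexinfhull} with $\alpha=\sqrt{2}$, and conclude via Lemma~\ref{feketesApplication}. The paper itself does not spell out a proof for this proposition (it is listed among results obtained ``just by running our C++ code''), so your hand computation with the explicit three-point set $X$ is precisely what that code would produce, and the single genuine convex-combination check you identify is correct.
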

Remark that the independent dominating sets are exactly the maximal independent sets (1-maximal $(\{0\},\mathbb{N})$-dominating sets).

Using that the perfect codes in graphs are the $(\{0\}, \{1\})$-dominating sets, we deduce the following:
\begin{Proposition}
Any graph of pathwidth 1 and of order $n$ admits at most $2^\frac{n}{2}$ perfect codes. 
Moreover, this bound is reached by the path on 2 vertices.
\end{Proposition}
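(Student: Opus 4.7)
The plan is to apply the framework developed for pathwidth $1$ exactly as in the induced matching and perfect total dominating set examples, with $\alpha = \sqrt{2}$: we build the bag of four extension matrices, reduce to accessible and co-accessible coordinates, exhibit a small polytope $X$ witnessing the invariant of Corollary \ref{convexinfhull}, and then invoke Lemma \ref{feketesApplication} to remove the multiplicative constant.

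More concretely, for $\sigma = \{0\}$ one has $p_\sigma = q_\sigma = 1$ and for $\rho = \{1\}$ one has $p_\rho = 2, q_\rho = 1$, so a priori there are five states per distinguished vertex, corresponding to ``$v \in D$ with $|N(v) \cap D| = 0$ or $\ge 1$'' and ``$v \notin D$ with $|N(v) \cap D| = 0$, $1$, or $\ge 2$''. The two states ``$v \in D$ with $\ge 1$ neighbor in $D$'' and ``$v \notin D$ with $\ge 2$ neighbors in $D$'' are absorbing and can never be completed to a valid perfect code, so the corresponding coordinates are not co-accessible; after the reduction of Section \ref{secbounds} only a handful of coordinates survive, and the four extension matrices $M_1, M_2, M_3, M_4$ of Subsection \ref{examplepw1} can be written down explicitly, very much as in the induced matching computation.

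With that data in hand, I would feed $\widetilde{A}$, $\widetilde{\vect{v}_{\sigma,\rho}}$ and $\alpha = \sqrt{2}$ into Algorithm \ref{algopw} (or guess the polytope directly from the structure of the matrices), producing a finite set $X \subseteq (\mathbb{R^+})^m$ with $\widetilde{\vect{v}_{\sigma,\rho}} \in \operatorname{conv}_\le(X)$ and $\tfrac{1}{\alpha} M \vect{x} \in \operatorname{conv}_\le(X)$ for every $M \in \widetilde{A}$ and every $\vect{x} \in X$. Corollary \ref{convexinfhull} then yields $\#_{\sigma,\rho}(n) = O(\sqrt{2}^{\,n})$, and Lemma \ref{feketesApplication} sharpens this to the clean bound $\#_{\sigma,\rho}(n) \le 2^{n/2}$.

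For sharpness, the path on two vertices admits exactly two perfect codes, namely $\{v_1\}$ and $\{v_2\}$ (each making the other vertex a non-$D$ vertex with exactly one $D$-neighbor), and this matches $2^{2/2} = 2$; incidentally, disjoint unions of $P_2$'s also realise $2^{n/2}$ perfect codes for every even $n$, which shows the exponent is optimal. The main obstacle is producing the witness polytope $X$: guessing $\alpha = \sqrt{2}$ from the $P_2$-union construction is natural, but certifying the convex-hull invariant requires either running the algorithm or checking a handful of linear inequalities by hand, and in principle there is no guarantee that Algorithm \ref{algopw} terminates --- although, as in the preceding two examples, here it does.
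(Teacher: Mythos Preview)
Your proposal is correct and follows exactly the template the paper uses for its worked examples (induced matchings and perfect total dominating sets); indeed the paper itself gives no detailed proof for this proposition, simply listing it among ``other results that can easily be showed just by running our C++ code''. Your state analysis ($p_\sigma=q_\sigma=1$, $p_\rho=2$, $q_\rho=1$, five raw states of which two are absorbing and not co-accessible) and the appeal to Corollary~\ref{convexinfhull} plus Lemma~\ref{feketesApplication} with $\alpha=\sqrt{2}$ are precisely what the method prescribes.
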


Using that minimal dominating sets in graphs are exactly 1-minimal $(\mathbb{N},\mathbb{N}^+)$-dominating sets, we deduce the following:
\begin{Proposition}
Any graph of pathwidth 1 and of order $n$ admits at most $2^\frac{n}{2}$ minimal dominating sets. 
Moreover, this bound is reached by the path on 2 vertices.
\end{Proposition}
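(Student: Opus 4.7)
The plan is to specialize the machinery of Section \ref{secbounds} to $(\sigma, \rho) = (\mathbb{N}, \mathbb{N}^+)$, since minimal dominating sets are exactly the $1$-minimal $(\mathbb{N}, \mathbb{N}^+)$-dominating sets, and to pathwidth $k=1$. So the target upper bound is $\alpha = \sqrt{2}$, giving $\alpha^n = 2^{n/2}$.

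First I would invoke Lemma \ref{expressiontocountminpw} to produce the concrete set $\widetilde{A}$ of extension matrices, the initial vector $\widetilde{\vect{v}_{\sigma,\rho,\min}}$, and the set $\widetilde{P}$ of completion vectors for the four pathwidth-$1$ extension types listed in Section \ref{examplepw1}. For this $(\sigma,\rho)$, Lemma \ref{recog} applies with $p = 2$ and $q = 1$, so the degree-into-$D$ residue takes only three values and only a handful of triples $(1_D(v)\cdot(1+1_{\text{has cert}}),\,1_C(v),\,\tau(p,q,|N(v)\cap D|))$ survive the accessibility and co-accessibility pruning. This gives a small explicit system to work with.

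Next, with $\alpha = \sqrt{2}$ I would exhibit a finite set $X\subseteq \mathbb{R}_+^m$ satisfying the hypotheses of Corollary \ref{convexinfhull}: namely $\widetilde{\vect{v}_{\sigma,\rho,\min}} \in \operatorname{conv}_{\le}(X)$ and $\tfrac{1}{\alpha} M \vect{x} \in \operatorname{conv}_{\le}(X)$ for every $\vect{x}\in X$ and every $M\in \widetilde{A}$. Such an $X$ is the output of Algorithm \ref{algopw} run on the data above, and its validity reduces to a finite collection of linear-programming feasibility checks (the coordinates involve $\sqrt{2}$, but the algebra stays simple). Corollary \ref{convexinfhull} then yields $\#_{\sigma,\rho}(n) \le C \cdot 2^{n/2}$ for some constant $C$, and Lemma \ref{feketesApplication} eliminates $C$ to give the clean bound $\#_{\sigma,\rho}(n) \le 2^{n/2}$.

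For the sharpness claim I would simply count by hand: the path $P_2$ has exactly two minimal dominating sets (its two singletons) and pathwidth $1$, hitting $2 = 2^{2/2}$; taking a disjoint union of $n/2$ copies of $P_2$, which still has pathwidth $1$, and using multiplicativity of the minimal-dominating-set count under disjoint unions (as in the proof of Lemma \ref{feketesApplication}) shows the bound is attained for every even $n$. The main obstacle is producing and certifying $X$: Algorithm \ref{algopw} has no general termination guarantee, and in principle the convex-hull containment tests can be delicate. Here, however, the state space collapses to only a few coordinates after pruning, and $\alpha^2 = 2$ is rational, so the certificate $X$ is short and its verification is routine linear algebra.
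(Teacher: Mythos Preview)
Your proposal is correct and follows exactly the approach the paper intends: the paper states this result among those ``that can easily be showed just by running our C++ code,'' i.e., instantiate the machinery of Lemma~\ref{expressiontocountminpw} for $1$-minimal $(\mathbb{N},\mathbb{N}^+)$-dominating sets in pathwidth~$1$, feed the resulting matrices into Algorithm~\ref{algopw} with $\alpha=\sqrt{2}$ to obtain a certificate $X$ for Corollary~\ref{convexinfhull}, and then apply Lemma~\ref{feketesApplication} to drop the constant. Your choice $p=2$, $q=1$ and the sharpness check on $P_2$ (and its disjoint unions) are correct.
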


Using that the minimal perfect dominating sets are
the 1-minimal $(\mathbb{N}, \{1\})$-dominating sets we get:
\begin{Proposition}
Let $\alpha$ be the real root of $x^3-x^2-1$ between 1 and 2, $\alpha\approx1.46557$.
Any graph of pathwidth 1 and of order $n$ admits at most $\alpha^n$ minimal perfect dominating sets. 

Moreover, this bound is thigh, since the number of minimal perfect dominating sets of the path of order $n$ is
a $\Theta(\alpha^n)$.
\end{Proposition}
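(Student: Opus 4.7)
The plan is to replicate the computational template that yielded Proposition~\ref{pwinducedmatching} for induced matchings, where the same algebraic number $\alpha$ appears. Since minimal perfect dominating sets are exactly the 1-minimal $(\mathbb{N},\{1\})$-dominating sets, Lemma~\ref{expressiontocountminpw} applies directly. First I would instantiate the periods from Lemma~\ref{recog} with $\sigma=\mathbb{N}$ and $\rho=\{1\}$: one can take $q=1$ and $p=3$, so the reduced-count coordinate takes three values. Together with the three values of the ``in $D$, has certificate'' flag and the two values of the ``is a certificate'' flag, this gives a state space of size at most $18$. Many of these states are neither accessible nor co-accessible and can be pruned, leaving the reduced data $(\widetilde{A},\widetilde{\vect{v}},\widetilde{P})$; this bookkeeping is tedious but mechanical.

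Next, I would let $\alpha$ be the real root of $x^3-x^2-1$ in $(1,2)$ and run Algorithm~\ref{algopw} on $(\widetilde{A},\widetilde{\vect{v}},\alpha)$. Provided the algorithm terminates, it outputs a finite bounded set $X$ satisfying the hypotheses of Corollary~\ref{convexinfhull}, which in turn yields a constant $C$ such that the number of minimal perfect dominating sets of any pathwidth-$1$ graph of order $n$ is at most $C\alpha^n$. Applying Lemma~\ref{feketesApplication} then removes $C$ and delivers the stated bound $\alpha^n$.

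For the sharpness claim, I would analyse the path $P_n$ directly by a transfer-matrix argument: track, at the rightmost vertex, whether it lies in $D$, whether its domination/certificate requirements are met, and deduce a linear recurrence for $f(n):=$ the number of minimal perfect dominating sets of $P_n$. A direct inspection (or, equivalently, extracting from $\widetilde{A}$ the $3\times 3$ matrix corresponding to ``append one path vertex and move the distinguished token forward'') reveals a characteristic polynomial whose Perron--Frobenius eigenvalue is $\alpha$, so $f(n)=\Theta(\alpha^n)$.

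The main obstacle is Step~2: Algorithm~\ref{algopw} is not guaranteed to terminate, and even when it does, the vertices of $X$ are nontrivial algebraic numbers in $\mathbb{Q}(\alpha)$ that one cannot easily guess by hand. In practice, as the author emphasises, the C++ implementation produces an explicit candidate $X$; then the remaining verification, namely checking for every $M\in\widetilde{A}$ and $x\in X$ that $\tfrac{1}{\alpha}Mx\in\operatorname{conv}_\le(X)$, reduces to a finite collection of linear-programming feasibility tests. The matching lower bound, by contrast, is elementary once the transfer matrix is written down.
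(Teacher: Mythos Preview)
Your proposal is correct and follows precisely the template the paper intends: instantiate Lemma~\ref{expressiontocountminpw} for $1$-minimal $(\mathbb{N},\{1\})$-dominating sets, run Algorithm~\ref{algopw} with the given $\alpha$ to obtain a finite $X$ for Corollary~\ref{convexinfhull}, strip the constant via Lemma~\ref{feketesApplication}, and read off the lower bound from the Perron--Frobenius eigenvalue of the path-extension matrix (this is exactly the one-line justification the paper gives in the remark following the Proposition). One small bookkeeping slip: with $p=3$ and $q=1$ the map $\tau$ takes four values $\{0,1,2,3\}$, not three, so the raw state space has size $3\cdot 2\cdot 4=24$ rather than $18$; and the reduced path matrix need not be $3\times 3$ as in the induced-matching example --- but neither point affects the argument, since the unreachable states are pruned anyway.
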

Remark that the second part of this statement comes from the fact that $\alpha$ is the eigenvalue of the matrix that one iterates to build the infinite path.
Also remark that counting all perfect dominating sets is not interesting
since it is $\Theta (2^n)$ on the star of order $n$ (it is also the case for dominating, or total dominating set).

Using that minimal total dominating sets are 
$1$-minimal $(\mathbb{N}^+,\mathbb{N}^+)$-dominating set we get:
\begin{Proposition}
Let $\alpha$ be the real root of $x^3-x-1$ between 1 and 2, $\alpha\approx1.324718$.
Any graph of pathwidth 1 and of order $n$ admits at most $\alpha^n$ minimal total dominating sets. 

Moreover, this bound is thigh, since the number of minimal perfect dominating sets of the path of order $n$ is
a $\Theta(\alpha^n)$.
\end{Proposition}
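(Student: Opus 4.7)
The plan is to apply directly the framework developed in the paper, specialized to $\sigma=\rho=\mathbb{N}^+$ and $k=1$, and to verify the bound by exhibiting an explicit witness set $X$. First I would instantiate Lemma \ref{expressiontocountminpw}: using the states described for 1-minimal dominating sets (with $p=2$, $q=1$ for both $\sigma$ and $\rho$) I would run the C++ implementation to produce the matrix family $\widetilde{A}_{\sigma,\rho,\min}$, the starting vector $\widetilde{\vect{v}_{\sigma,\rho,\min}}$, and the completion vectors $\widetilde{P}$, after restricting to accessible and co-accessible coordinates. Most raw states (e.g.\ those with too many $D$-neighbors, or $D$-vertices lacking both a self-certificate and a certified neighbor) collapse or become unreachable, so one expects only a handful of surviving coordinates.

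Next I would take $\alpha$ to be the real root of $x^3-x-1$ in $(1,2)$ and exhibit a finite set $X\subseteq (\mathbb{R}^+)^m$ with $\widetilde{\vect{v}_{\sigma,\rho,\min}}\in\operatorname{conv}_\le(X)$ and $\tfrac{1}{\alpha}M\vect{x}\in\operatorname{conv}_\le(X)$ for every $M\in\widetilde{A}_{\sigma,\rho,\min}$ and $\vect{x}\in X$. In practice I would obtain $X$ by running Algorithm \ref{algopw} with this value of $\alpha$; since the minimal polynomial of $\alpha$ is $x^3-x-1$, the vertices of the polytope will have coordinates in $\mathbb{Z}[\alpha]$, and checking membership in $\operatorname{conv}_\le(X)$ reduces to a finite battery of linear programs over this number field. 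Corollary \ref{convexinfhull} then yields the existence of a constant $C$ such that the maximal number $\#_{\sigma,\rho,\min}(n)$ of 1-minimal $(\mathbb{N}^+,\mathbb{N}^+)$-dominating sets in a pathwidth-$1$ graph of order $n$ is at most $C\alpha^n$, and Lemma \ref{feketesApplication}, via the multiplicativity of the count under disjoint union, upgrades this to the clean bound $\alpha^n$.

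For the matching lower bound, I would observe that $\alpha$ is the Perron--Frobenius eigenvalue of one of the matrices in $\widetilde{A}_{\sigma,\rho,\min}$, namely the one corresponding to the extension that keeps the new vertex and adds the edge to the old one --- in other words, the single extension iterated to produce the path $P_n$. The characteristic equation $x^3-x-1=0$ being exactly the recurrence governing the number of minimal total dominating sets of a path (a classical fact one can also verify by checking that the restricted $3\times 3$ block has this characteristic polynomial), diagonalization of this matrix produces a positive constant $c>0$ with the count equal to $c\alpha^n+o(\alpha^n)$, establishing the $\Theta(\alpha^n)$ claim.

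The main obstacle, as with the previous propositions in this section, is purely computational: guessing (via the algorithm) a correct set $X$ of moderate size and then certifying the convex-hull inclusions exactly in $\mathbb{Z}[\alpha]$. The theoretical content is already packaged by Lemma \ref{expressiontocountminpw}, Corollary \ref{convexinfhull}, and Lemma \ref{feketesApplication}; what remains is to make sure the small case $n\le k=1$ is handled (trivially, since an isolated vertex has no total dominating set), and that the Perron eigenvector of the path-extension matrix has a nonzero pairing with some $\vect{p}\in\widetilde{P}$, which is routine to verify once the explicit matrix is in hand.
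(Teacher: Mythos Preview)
Your overall plan follows the paper's framework (Lemma~\ref{expressiontocountminpw}, Corollary~\ref{convexinfhull}, Lemma~\ref{feketesApplication}, and the Perron--Frobenius argument for the lower bound), but there is a concrete gap in the key computational step. You write that you would obtain $X$ ``by running Algorithm~\ref{algopw} with this value of $\alpha$''. The paper states explicitly that for this particular case the naive Algorithm~\ref{algopw} does \emph{not} terminate: one must seed the initial set with an additional vector, namely $(2,0,0,0,0,0,2,0,0)^T$, discovered by trial and error, after which the algorithm produces a valid $X$ of size~$23$. Without this extra ingredient your proposed proof would stall indefinitely at the polytope-construction step, so the claim that ``what remains'' is routine is not accurate here.

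A secondary point: the seed vector above is $9$-dimensional, so after removing non-accessible and non-co-accessible coordinates one is still working in $\mathbb{R}^9$, not in a space where ``only a handful'' of coordinates survive. In particular, your reference to ``the restricted $3\times3$ block'' for the path-extension matrix is too optimistic about the ambient dimension; the fact that $\alpha$ satisfies a cubic does not mean the relevant transfer matrix is $3\times3$, only that $\alpha$ is its dominant eigenvalue.
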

Our simple algorithm cannot directly find a convex polytope that respects the conditions of
Corollary \ref{convexinfhull} with $\alpha$, but if we add $(2,0,0,0,0,0,2,0,0)^T$ to the set $X$ at the beginning of Algorithm \ref{algopw}
then we find such a set of size $23$ (this vector was found by trial and error helped with some intuition and 2 can be replaced by any greater real number).

Using that maximal strong stable sets are 
$1$-minimal $(\{0\},\{0,1\})$-dominating set we get:
\begin{Proposition}
Any graph of pathwidth 1 and of order $n$ admits at most $3^\frac{n}{3}\approx 1.44225^n$ maximal strong stable sets. 

Moreover, this bound is reached by the path of order 3.
\end{Proposition}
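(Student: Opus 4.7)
The plan is to apply the scheme illustrated repeatedly in Subsection \ref{examplepw1}, instantiated with $(\sigma,\rho)=(\{0\},\{0,1\})$ and the $1$-minimal variant of the framework. By Lemma \ref{recog} one may take, for instance, $p=2$ and $q=1$, since $1_\sigma$ and $1_\rho$ are ultimately constant from index $2$ on, which keeps the per-vertex state space of modest size. Lemma \ref{expressiontocountminpw} then yields a finite set of matrices $A\subseteq\mathbb{N}^{m\times m}$, a start vector $\vect{v}_{\sigma,\rho,\min}$, and a set of completion vectors $P$, obtained by enumerating the four possible pathwidth-$1$ extensions (keep old vs.\ new vertex, with or without the new edge) recalled at the beginning of Subsection \ref{examplepw1}.

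Next I would prune the non-accessible and non-co-accessible coordinates as described in the opening of Section \ref{secbounds} to obtain the reduced data $\widetilde{A}$, $\widetilde{\vect{v}_{\sigma,\rho,\min}}$ and $\widetilde{P}$, and feed them to Algorithm \ref{algopw} with $\alpha=3^{1/3}$. Assuming that the algorithm terminates (which, as for the other results in this subsection, is what the provided C++ implementation achieves), its output is a finite set $X$ witnessing the hypotheses of Corollary \ref{convexinfhull}. That corollary delivers an upper bound of the form $\#_{\sigma,\rho,\min}(n) < C\alpha^n$, and Lemma \ref{feketesApplication} — applicable because the number of $1$-minimal $(\sigma,\rho)$-dominating sets is multiplicative under disjoint unions — strengthens this to $\#_{\sigma,\rho,\min}(n) \le \alpha^n = 3^{n/3}$, which is the announced bound.

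For the sharpness claim it suffices to inspect the path $P_3 = v_1v_2v_3$ by hand. Its independent sets are $\emptyset,\{v_1\},\{v_2\},\{v_3\},\{v_1,v_3\}$; the last one fails $|N(v_2)\cap D|\in\{0,1\}$ and $\emptyset$ is not $1$-maximal, so the maximal strong stable sets of $P_3$ are exactly the three singletons, matching $3 = 3^{3/3}$. Taking the disjoint union of $k$ copies of $P_3$ (still a graph of pathwidth $1$) produces, on $n=3k$ vertices, precisely $3^k = 3^{n/3}$ maximal strong stable sets, so the bound is attained for every multiple of $3$.

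The only genuine obstacle is the termination and effective construction of the polytope $X$ in Algorithm \ref{algopw} for this particular input: the algorithm is not guaranteed to halt a priori, and even when it does the convex-hull computations must be carried out with algebraic numbers. In practice $\alpha = 3^{1/3}$ is a simple enough number and the reduced state space is small enough that the computer search (or alternatively a small polytope designed by hand, guided by the extremal structure of disjoint copies of $P_3$) should close quickly; once $X$ is produced, verifying $\tfrac{1}{\alpha}M\vect{x}\in\operatorname{conv}_\le(X)$ for every $M\in\widetilde{A}$ and every $\vect{x}\in X$ is a routine linear-programming check, and the rest of the proof is immediate from Corollary \ref{convexinfhull} and Lemma \ref{feketesApplication}.
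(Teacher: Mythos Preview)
Your plan is exactly the paper's (implicit) proof: it too simply says ``using that maximal strong stable sets are [this kind of $(\sigma,\rho)$-dominating set], run the C++ code'', i.e.\ build the pathwidth-$1$ matrices, reduce to accessible/co-accessible coordinates, apply Algorithm~\ref{algopw} with $\alpha=3^{1/3}$, and invoke Corollary~\ref{convexinfhull} together with Lemma~\ref{feketesApplication}. Your sharpness check on $P_3$ is also correct.

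One inconsistency to clean up. You invoke the $1$-\emph{minimal} framework (Lemma~\ref{expressiontocountminpw}, $\vect{v}_{\sigma,\rho,\min}$, $\#_{\sigma,\rho,\min}$), but a maximal strong stable set is a $1$-\emph{maximal} $(\{0\},\{0,1\})$-dominating set---which is exactly the notion you use, correctly, in your $P_3$ check (``$\emptyset$ is not $1$-maximal''). The paper itself writes ``$1$-minimal'' in the lead-in sentence, which appears to be a slip, and you copied it; but taken literally the $1$-minimal computation is vacuous here, since with $0\in\sigma$ and $0\in\rho$ removing any vertex from a $(\{0\},\{0,1\})$-dominating set again gives one, so the only $1$-minimal set is $\emptyset$. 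Replace the references by Lemma~\ref{expressiontocountmaxpw} and the $\max$ subscripts and your write-up matches both your own sharpness argument and the intended method.
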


\paragraph{Maximal induced matching}
Maximal induced matchings correspond to $2$-maximal $(\{1\},\mathbb{N})$-dominating sets, 
but it does not seem easy to count them using $1$-maximal $(\sigma,\rho)$-dominating sets.
Unfortunately, we did not describe any way to find a set of matrices corresponding to  $2$-maximal $(\sigma,\rho)$-dominating sets 
(but it is not so hard to generalize to $k$-maximal $(\sigma,\rho)$-dominating sets).
However, we can do it by hand for this particular case.

First we need the following trivial Lemma:
\begin{Lemma}
Let $G(V,E)$ be a graph and $D$ be an induced matching of $G$.
Then $D$ is a maximal induced matching if and only if for all $u,v\in V\setminus D$ and $(u,v)\in E$, $(N(v)\cup N(u))\cap D\not=\emptyset$.
\end{Lemma}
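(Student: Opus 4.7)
The plan is to unfold directly the definition of induced matching as a $(\{1\},\mathbb{N})$-dominating set: for $D$ to be an induced matching, every vertex of $D$ must have exactly one neighbor in $D$. Maximality here means $2$-maximality, i.e.\ for all distinct $u,v\in V\setminus D$, the set $D\cup\{u,v\}$ is not an induced matching. A useful preliminary observation is that adding a single vertex $u\in V\setminus D$ to $D$ can never produce an induced matching: if $u$ had its unique neighbor $w\in D$, then $w$ would have two neighbors in $D\cup\{u\}$ (its original mate in $D$ plus $u$). This justifies focusing on two-vertex additions.

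For the forward direction, I would fix $u,v\in V\setminus D$ with $(u,v)\in E$ and argue by contrapositive: if $(N(u)\cup N(v))\cap D=\emptyset$, then $D\cup\{u,v\}$ is itself an induced matching, contradicting maximality. Indeed, every $w\in D$ keeps exactly one neighbor in the enlarged set because $u,v\notin N(w)$, while $u$ and $v$ each acquire exactly one neighbor in the enlarged set, namely each other.

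For the converse, I would assume the neighborhood condition and suppose for contradiction that $D\cup\{u,v\}$ is an induced matching for some $u,v\in V\setminus D$. The one-neighbor constraint on $u$ leaves two cases: either $u$'s unique neighbor in $D\cup\{u,v\}$ lies in $D$ (impossible by the preliminary observation applied to $w$, which would accumulate two neighbors), or $u$'s unique neighbor is $v$. Hence $(u,v)\in E$ and $u$ has no neighbor in $D$; the symmetric argument for $v$ forces $(N(u)\cup N(v))\cap D=\emptyset$, contradicting the hypothesis.

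The argument is essentially a routine unfolding of definitions, so I do not anticipate a genuine obstacle; the only subtlety worth spelling out in the write-up is the single-vertex observation, which is what makes $2$-maximality, rather than $1$-maximality, the right notion and which underlies both directions of the equivalence.
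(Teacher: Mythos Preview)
Your proposal is correct; the paper itself calls this lemma ``trivial'' and offers no proof, so there is nothing to compare against beyond the obvious unfolding of definitions, which is exactly what you do. Your preliminary observation that a single vertex can never be added is the right way to see why $2$-maximality is the relevant notion here, and both directions of your argument go through as written.
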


Then, we say that, for any $k$-distinguished graph $g=(G(V,E),S)$, $D$ is a maximal induced matching of $g$ if:
\begin{itemize}
 \item for all $v\in D\setminus S$, $|N(v)\cap D|=1$,
 \item for all $u,v\in V\setminus (D\cup S)$ with $(u,v)\in E$, $(N(v)\cup N(u))\cap D\not=\emptyset$.
\end{itemize}

Then we will have 5 states for the distinguished vertex of the distinguished graphs:
\begin{enumerate}
\setcounter{enumi}{-1}
 \item the vertex is in $D$ and has no neighbor in $D$,
 \item the vertex is in $D$ and has exactly one neighbor in $D$,
  \item the vertex is not in $D$, has no neighbor in $D$ and has no neighbor in $V\setminus S$ that has no neighbor in $D$,
 \item the vertex is not in $D$, has no neighbor in $D$ and has at least one neighbor in $V\setminus S$ that has no neighbor in $D$,
 \item the vertex is not in $D$ and has at least one neighbor in $D$.
\end{enumerate}
For any $1$-distinguished graph $g=(G(V,E),\{s\})$ let $\Psi(g)$ be the vector whose $i$-th coordinate
gives the number of maximal induced matchings of $g$ where  $s$ is in the $i$th state. 
\begin{Lemma}
Let $g=(G(V,E),\{o\})$ and  $g'=(G'(V',E'),\{n\})$  be two distinguished graphs, where $g'$ is the graph obtained by extending $g$ and adding $n$
and an edge between $o$ and $n$.
Then: $$\Psi(g')=
\begin{pmatrix}
0&0&1&1&1\\
1&0&0&0&0\\
0&0&0&0&1\\
0&0&1&0&0\\
0&1&0&0&0\\
\end{pmatrix}
\Psi(g)$$
\end{Lemma}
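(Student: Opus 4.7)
The plan is to perform a case analysis over the five possible states of $o$ in $g$ crossed with the two possible choices of whether $n$ belongs to $D$, producing ten cases whose outcomes pin down exactly the ten potentially nonzero entries of the matrix. Two structural observations drive every case. First, since $n$ is freshly introduced and its only edge is the new edge to $o$, the local data needed to determine the state of $n$ in $g'$ is just the membership of $n$ in $D$, together with the membership of $o$ in $D$ and the information about $o$ already packaged in its state. Second, because $o$ leaves $S$ on this extension, the clauses in the generalized definition of maximal induced matching that were suspended on $o$ must now be enforced in $g'$; these are exactly what the five states were designed to track.

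First I would record the following interpretations, which then make the case work mechanical. If $o$ is in state $0$ or $1$, then $o\in D$ and enforcing $|N(o)\cap D|=1$ in $g'$ rigidly determines whether $n\in D$. If $o$ is in state $2$, $3$, or $4$, then $o\notin D$, and the surviving pair obligations of $o$ with its old non-distinguished non-$D$ neighbours are precisely classified by the state: state $2$ leaves no outstanding obligation (every such neighbour already has a $D$-neighbour), state $3$ leaves at least one obligation that can only be discharged by $o$ itself acquiring a $D$-neighbour, and state $4$ already has $o$ sitting on a $D$-neighbour, so all present and future obligations of $o$ are discharged.

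Next I would run through the cases. From state $0$, enforcing $|N(o)\cap D|=1$ forces $n\in D$, so $n\in D$ with the single neighbour $o\in D$, giving state $1$ for $n$. From state $1$, the same constraint forces $n\notin D$, and then $n$ has its unique neighbour $o\in D$, giving state $4$. From state $2$, if $n\in D$ then $o$ acquires a $D$-neighbour, no obligation is violated, and $n\in D$ has no $D$-neighbour, giving state $0$; if $n\notin D$ then $o$ is a non-distinguished neighbour of $n$ with no $D$-neighbour, so $n$ enters state $3$, the obligation being deferred to $n$. From state $3$ the outstanding obligation on $o$ forces $n\in D$, which lands $n$ in state $0$; the choice $n\notin D$ is forbidden. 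Finally from state $4$, both choices for $n$ give valid configurations: $n\in D$ yields state $0$, and $n\notin D$ yields state $2$ because $o$, being the only neighbour of $n$, already has a $D$-neighbour and thus is not a bad neighbour. These ten outcomes line up one-for-one with the nonzero entries of the displayed matrix.

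The part I expect to require the most care is state $3$: one must verify that the obligation carried by the state really can be discharged \emph{only} by $o$ acquiring a $D$-neighbour, so that $n\notin D$ is genuinely excluded. The key point is that the offending non-distinguished neighbour of $o$ is no longer in $S$ in $g$ and therefore will never receive a new edge in any sequence of further extensions, so the pair condition for that edge can be rescued only through $N(o)\cap D$. Once this is settled, the linearity of the count in the entries of $\Psi(g)$, which is inherent to the state-of-distinguished-vertex framework of Section \ref{secalgo}, promotes the case table above to the matrix equation claimed in the statement.
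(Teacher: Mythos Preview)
Your proposal is correct and follows essentially the same approach as the paper: a case analysis over the five states of $o$ crossed with the choice of whether $n\in D$, verifying for each of the ten cases whether the defining conditions of a maximal induced matching are satisfied once $o$ leaves $S$ and reading off the resulting state of $n$. The paper organises the cases in a slightly different order (grouping first by the outcome rather than strictly by the state of $o$), but the content and the resulting matrix entries are identical.
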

\begin{proof}
For any maximal induced matching $D$ of $g$ where $o$ is in state 2, 3 or 4, 
then $D'=D\cup n$ is a maximal induced matching of $g'$ where $n$ is in state $0$,
since then $s$ has a neighbor in $D'$ and $n$ has no neighbor in $D'$.

For any maximal induced matching $D$ of $g$ where $o$ is in state $3$,
$D$ is not a maximal induced matching of $g'$ since $o$ has no neighbor in $D$ 
and has a neighbor that has no neighbor in $D$.

For any maximal induced matching $D$ of $g$ where $o$ is in state $2$,
$D$ is a maximal induced matching of $g'$ where $n$ is in state $3$ since,
$v$ is the only neighbor of $o$.

For any maximal induced matching $D$ of $g$ where $o$ is in state $4$,
$D$ is a maximal induced matching of $g'$ where $n$ is in state $2$ since,
$v$ is the only neighbor of $o$.

For any maximal induced matching $D$ of $g$ where $o$ is in state $1$
$D\cup\{n\}$ is not an induced matching of $g'$ since $o$ has 2 neighbors in $D\cup \{n\}$.

For any maximal induced matching $D$ of $g$ where $o$ is in state $1$
$D$ is a maximal induced matching of $g'$ where $n$ is in state $4$ since it shares an edge with $v$.

For any maximal induced matching $D$ of $g$ where $o$ is in state $0$
$D$ is not a maximal induced matching of $g'$ since $o$ has 0 neighbors in $D$.

For any maximal induced matching $D$ of $g$ where $o$ is in state $0$
$D\cup\{n\}$ is a maximal induced matching of $g'$ where $n$ is in state $1$ since it shares an edge with $v$.

It covers all the possible cases, and it gives the matrix from the Theorem.
\end{proof}

A similar case analysis give the matrices of the other possible extensions and we get the following set of matrices:
$$\resizebox{0.9\hsize}{!}{ $A=\left\{\begin{pmatrix}
0&0&1&1&1\\
1&0&0&0&0\\
0&0&0&0&1\\
0&0&1&0&0\\
0&1&0&0&0\\
\end{pmatrix},\begin{pmatrix}
1&0&0&0&0\\
1&1&0&0&0\\
0&0&0&0&0\\
0&0&1&1&0\\
0&0&0&0&1\\
\end{pmatrix},
 \begin{pmatrix}
0&1&1&0&1\\
0&0&0&0&0\\
0&1&1&0&1\\
0&0&0&0&0\\
0&0&0&0&0\\
\end{pmatrix}, \begin{pmatrix}
1&0&0&0&0\\
0&1&0&0&0\\
0&0&1&0&0\\
0&0&0&1&0\\
0&0&0&0&1\\
\end{pmatrix} \right\}$}$$
 
We can also show the following Lemma:
\begin{Lemma}
Let $g=(G(V,E),\{s\})$ be a $1$-distinguished graph and $G'$ 
be the only graph that can be obtained by completing $g$.
Then the number of maximal induced matchings of $G'$ is given by  
$(0,1,1,0,1)^T\cdot \Psi(g)$.
\end{Lemma}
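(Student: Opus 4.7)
The plan is first to observe that since $|S|=1$, the completion step cannot add any edges---there is no pair of distinct distinguished vertices to connect---so $G'$ coincides with the underlying graph of $g$. Counting maximal induced matchings of $G'$ therefore amounts to deciding, for each of the five states of $s$, whether every generalized maximal induced matching $D$ of $g$ with $s$ in that state is automatically a genuine maximal induced matching of $G'$. The coefficient attached to the state in the claimed dot product is the corresponding $0/1$ indicator.

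Comparing the generalized and genuine definitions, the only constraints missing from the former involve $s$, namely
\begin{itemize}
\item[(i)] if $s\in D$, then $|N(s)\cap D|=1$;
\item[(ii)] if $s\notin D$, then for every neighbor $u\in V\setminus D$ of $s$ one has $(N(s)\cup N(u))\cap D\neq\emptyset$.
\end{itemize}
I would then inspect each state in turn. State $0$ violates (i). State $1$ satisfies (i), and (ii) is vacuous since $s\in D$. State $2$ makes (i) vacuous and satisfies (ii) because, by the definition of this state, every neighbor of $s$ in $V\setminus S$ already has a neighbor in $D$. State $4$ makes (i) vacuous and satisfies (ii) because $N(s)\cap D\neq\emptyset$ already witnesses the condition for every potential $u$. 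The only case still requiring argument is state $3$.

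The main obstacle is state $3$: the definition only guarantees a neighbor $u\in V\setminus S$ of $s$ with $N(u)\cap D=\emptyset$, whereas (ii) concerns $u\in V\setminus D$. To close the gap I would show that such a $u$ cannot itself lie in $D$: if it did, then $u\in D\setminus S$ (since $u\neq s$), and the first clause of the generalized definition (with $\sigma=\{1\}$) would force $|N(u)\cap D|=1$, contradicting $N(u)\cap D=\emptyset$. Hence $u\in V\setminus D$, and $(N(s)\cup N(u))\cap D=\emptyset$ violates (ii); state $3$ contributes $0$. The five coefficients are therefore $0,1,1,0,1$, and summing $\Psi(g)$ against $(0,1,1,0,1)^T$ yields the announced count.
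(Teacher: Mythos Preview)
Your proof is correct and is exactly the natural case-by-case analysis one would expect; the paper itself states this lemma without proof, so there is nothing to compare against beyond noting that your argument supplies the omitted details. Your handling of state~3---observing that the witnessing neighbor $u\in V\setminus S$ with $N(u)\cap D=\emptyset$ cannot lie in $D$ because the generalized condition on $D\setminus S$ would then force $|N(u)\cap D|=1$---is the only place where a genuine argument is needed, and you handle it cleanly.
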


Thus the maximal number of maximal induced matchings
of a graph of pathwidth at most 1 is given by:
$$\resizebox{0.9\hsize}{!}{ $\#_{\text{maximal induced matching}}(n) =\max \left\{ 
\begin{pmatrix}
0\\
1\\
1\\
0\\
1\\
\end{pmatrix} \cdot \left(\prod_{i=1}^{n-1}M_i\right) 
\begin{pmatrix}
1\\
0\\
1\\
0\\
0\\
\end{pmatrix}
: (M_i)_{1\le i\le n-1}\in A^{n-1}\right\}.$}$$

Running the algorithm previously described with this set of matrices and
$\alpha =13^\frac{1}{9}$ gives a set $X$ of $25$ vectors that respects 
the conditions of Corollary \ref{convexinfhull}. Thus we deduce the following result:
\begin{Proposition}
Any graph of pathwidth 1 and of order $n$ admits at most $13^\frac{n}{9} \approx 1.32975^n$ 
maximal induced matching.
Moreover the bound is reached by the following graph:
\begin{figure}[H]
\centering
\includegraphics{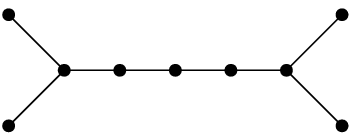}
\end{figure}
\end{Proposition}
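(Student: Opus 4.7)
The plan is to follow exactly the same recipe that was used earlier in the subsection for induced matchings and perfect total dominating sets, but applied to the set of matrices $A$, the initial vector $(1,0,1,0,0)^T$, and the completion vector $(0,1,1,0,1)^T$ derived just above the statement.

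First, I would take $\alpha = 13^{1/9}$ and feed the triple $(A, (1,0,1,0,0)^T, \alpha)$ into Algorithm \ref{algopw}. The claim is that the algorithm terminates and returns a set $X$ of $25$ vectors in $\mathbb{R}_{\ge 0}^5$ with the property that $(1,0,1,0,0)^T \in \operatorname{conv}_\le(X)$ and $\frac{1}{\alpha} M \vect{x} \in \operatorname{conv}_\le(X)$ for every $M\in A$ and every $\vect{x} \in X$. Termination is the only genuinely non-trivial point, and in line with the paper's philosophy it is checked by actually running the implementation; once the finite set $X$ is exhibited, the two stability conditions are routine linear-programming verifications (the coordinates of $X$ live in $\mathbb{Q}(\alpha)$, so each check reduces to a finite computation in an algebraic number field).

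Granted this $X$, Corollary \ref{convexinfhull} applied with $\vect{p}=(0,1,1,0,1)^T$ yields a constant $C$ such that the number of maximal induced matchings of any pathwidth-$1$ graph of order $n$ is at most $C \alpha^n = C\cdot 13^{n/9}$. To remove the constant $C$ I would reuse the argument of Lemma \ref{feketesApplication}: the number of maximal induced matchings of a disjoint union of graphs is the product of the numbers of maximal induced matchings of the components, and the disjoint union of two pathwidth-$1$ graphs is itself of pathwidth $1$. Hence if for some $N$ the number of maximal induced matchings exceeded $\alpha^N$ by any fixed $\varepsilon>0$, taking repeated disjoint copies would contradict the $C\alpha^n$ bound. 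This upgrades the upper bound to the clean $13^{n/9}$.

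For sharpness, I would verify by direct enumeration that the specific $9$-vertex graph depicted in the figure has pathwidth $1$ and admits exactly $13$ maximal induced matchings (this is a finite check on a small graph). Taking $k$ disjoint copies gives, for every $n$ that is a multiple of $9$, a pathwidth-$1$ graph of order $n$ with exactly $13^{n/9} = \alpha^n$ maximal induced matchings, showing that the bound is tight in this exact sense. The main obstacle throughout is the first step: there is no a priori guarantee that Algorithm \ref{algopw} terminates at $\alpha = 13^{1/9}$, and the verification reduces to exhibiting the $25$-vertex polytope produced by the computer-assisted search, which is exactly what the paper asserts the implementation provides.
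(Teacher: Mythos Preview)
Your proposal is correct and follows essentially the same approach as the paper: run Algorithm \ref{algopw} with $\alpha=13^{1/9}$ to obtain a $25$-vector set $X$ satisfying Corollary \ref{convexinfhull}, then invoke the supermultiplicativity argument of Lemma \ref{feketesApplication} to eliminate the constant, and finally check that the depicted $9$-vertex graph attains $13$ maximal induced matchings. The only point you make explicit that the paper leaves implicit is that the multiplicativity of maximal induced matchings over disjoint unions (needed for Lemma \ref{feketesApplication}) still holds even though maximal induced matchings are $2$-maximal rather than $1$-maximal $(\sigma,\rho)$-dominating sets; this is indeed straightforward.
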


\subsubsection{Results and examples in pathwidth 2}
For graphs of pathwidth $2$, 
we need to use $2$-distinguished graphs so there are $12$ matrices.
Moreover, the dimensions of these matrices is at least $9$ for non-trivial $\sigma$ and $\rho$.
So we really need the computer and there is no simple example to do by hand.

Using that dominating stable sets are $(\{0\}, \mathbb{N}^+)$-dominating sets we get:
\begin{Proposition}
Any graph of pathwidth $2$ and of order $n$ contains at most $3^\frac{n}{3}\approx 1.44225^n$ dominating stable sets.
The bound is reached by the triangle.
\end{Proposition}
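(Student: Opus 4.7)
The plan is to apply the framework of Section \ref{secalgo} and Section \ref{secbounds} directly. Independent dominating sets coincide with $(\{0\}, \mathbb{N}^+)$-dominating sets, and both $\sigma = \{0\}$ and $\rho = \mathbb{N}^+$ are recognizable with $p = q = 1$ in Lemma \ref{functionautomaton}, so each vertex admits only four local states (in $D$ versus not, with zero versus at least one neighbor in $D$). For pathwidth $k = 2$ the distinguished set therefore ranges over at most $4^2 = 16$ global states, and Lemma \ref{expressiontocountallpw} yields a finite set $A_{\sigma,\rho} \subseteq \mathbb{N}^{16 \times 16}$, a starting vector $\vect{v}_{\sigma,\rho}$, and a set of completion vectors $P_{\sigma,\rho}$ whose maximum matrix-product expression computes $\#_{\sigma,\rho}(n)$.

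First, I would enumerate the twelve extension operations (three choices of removed vertex $o \in S \cup \{n\}$ combined with the $2^2 = 4$ possible edge sets between $o$ and $S' \setminus \{o\}$) and compute the associated matrices using Lemma \ref{pwexistsmat}. After restricting to accessible and co-accessible coordinates, I obtain reduced data $\widetilde{A}$, $\widetilde{P}$, $\widetilde{\vect{v}_{\sigma,\rho}}$, and I would drop any matrix coordinate-wise dominated by another in order to shrink the set further, as recommended in the discussion of Algorithm \ref{algopw}.

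Next, setting $\alpha = 3^{1/3}$, I would invoke Algorithm \ref{algopw} on this reduced data. Provided the algorithm halts, it returns a bounded set $X$ satisfying the hypotheses of Corollary \ref{convexinfhull}, and combining that corollary with Lemma \ref{feketesApplication} gives $\#_{\sigma,\rho}(n) \le \alpha^n = 3^{n/3}$ for every $n$.

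Finally, sharpness is witnessed by the triangle $K_3$, which has pathwidth $2$ and exactly three independent dominating sets (its three singleton vertices); taking the disjoint union of $n/3$ copies of $K_3$ gives graphs of pathwidth $2$, order $n$, and exactly $3^{n/3}$ independent dominating sets. The main obstacle is that there is no a priori guarantee that Algorithm \ref{algopw} terminates for this specific $(\widetilde{A}, \widetilde{\vect{v}_{\sigma,\rho}}, \alpha)$; if the naive execution fails to halt, one would have to preload $X$ with a carefully chosen auxiliary vector (as done for minimal total dominating sets in pathwidth $1$) or exploit a coarser monotone ordering in place of coordinate-wise comparison, along the lines of the majorization trick mentioned at the end of Section \ref{secbounds}.
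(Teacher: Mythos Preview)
Your proposal is correct and follows the same computer-assisted route as the paper: identify dominating stable sets with $(\{0\},\mathbb{N}^+)$-dominating sets, build the extension matrices for $2$-distinguished graphs via Lemma \ref{pwexistsmat}, reduce to accessible/co-accessible coordinates, and run Algorithm \ref{algopw} with $\alpha=3^{1/3}$ to obtain the polytope $X$ needed in Corollary \ref{convexinfhull}, then invoke Lemma \ref{feketesApplication}. The paper's own treatment is even more terse (it simply asserts the method applies and defers to the C++ implementation), so your write-up is in fact more explicit; the paper additionally remarks that the bound is a trivial consequence of the Moon--Moser theorem on maximal independent sets in general graphs, which gives an alternative one-line argument you might mention.
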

Remark that this result is a trivial case of 
the famous result of Moon and Moser stating that there are at most $3^\frac{n}{3}\approx 1.44225^n$ 
dominating stable sets in any graph of order $n$.
The two next Lemmas might also be simple corollaries of this result, 
but if it is the case it is less obvious.

Using that perfect total dominating sets are $(\{1\},\{1\})$-dominating set, we get:
\begin{Proposition}
Any graph of pathwidth $2$ and order $n$ contains at most $3^\frac{n}{3}\approx 1.44225^n$ perfect total dominating sets.
The bound is reached by the triangle.
\end{Proposition}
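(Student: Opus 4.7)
The plan is to apply the general pathwidth-$k$ toolkit of Sections~\ref{secalgo}--\ref{secbounds} to the case $k=2$, $\sigma=\rho=\{1\}$. First I would feed these parameters into Lemma~\ref{expressiontocountallpw}: the periodicity data of Lemma~\ref{functionautomaton} is $p_\sigma=p_\rho=2$, $q_\sigma=q_\rho=1$, so one obtains a finite set of extension matrices $A_{\sigma,\rho}\subseteq\mathbb{N}^{m\times m}$, an initial vector $\vect{v}_{\sigma,\rho}$ and a completion set $P_{\sigma,\rho}$. Restricting to the accessible and co-accessible coordinates --- in particular discarding the two absorbing ``$\ge 2$ $D$-neighbours'' classes that a vertex can never escape from once it leaves $S$ (since non-distinguished vertices never gain new edges) --- yields a reduced triple $(\widetilde A,\widetilde{\vect{v}_{\sigma,\rho}},\widetilde P)$ of small dimension, which the paper's C++ implementation produces automatically.

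Next, I would set $\alpha = 3^{1/3}$ and run Algorithm~\ref{algopw} on the reduced input. When it terminates it outputs a finite set $X\subset\mathbb{R}_{\ge 0}^{\widetilde m}$ satisfying the hypotheses of Corollary~\ref{convexinfhull}, namely $\widetilde{\vect{v}_{\sigma,\rho}}\in\operatorname{conv}_\le(X)$ and $\alpha^{-1}M\vect{x}\in\operatorname{conv}_\le(X)$ for every $M\in\widetilde A$ and $\vect{x}\in X$. Corollary~\ref{convexinfhull} then furnishes a constant $C$ with $\#_{\sigma,\rho}(n)<C\cdot 3^{n/3}$, and Lemma~\ref{feketesApplication} upgrades this to the clean inequality $\#_{\sigma,\rho}(n)\le 3^{n/3}$, giving the upper bound.

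For the matching lower bound I would extract from the execution of Algorithm~\ref{algopw} a cyclic sequence of matrices in $\widetilde A$ that saturates $\operatorname{conv}_\le(X)$, and translate it back into an extension sequence followed by a suitable completion, as described at the end of Section~\ref{secbounds}. Iterating this cycle produces an infinite family of pathwidth-$2$ graphs whose counts of perfect total dominating sets realise the rate $3^{n/3}$; the extremal graph produced by the shortest saturating cycle is the small one named in the statement, and the claim that it has exactly $3^{n/3}$ perfect total dominating sets is then confirmed by direct enumeration.

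The one non-routine step is the termination of Algorithm~\ref{algopw} at this particular $\alpha$: in general there is no guarantee that it halts, and in difficult cases one has to seed $X$ with hand-picked vectors, as the author does for minimal total dominating sets earlier in the section. Here, however, the reduced state space is small and $\alpha = 3^{1/3}$ is an algebraic integer of degree three (minimal polynomial $x^3-3$), so the plain algorithm is expected to converge and the whole argument reduces to a finite, machine-verifiable computation.
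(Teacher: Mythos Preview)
Your proposal is exactly the paper's intended argument: the paper does not give a separate proof for this proposition but simply invokes the general machinery of Section~\ref{secbounds} (Lemma~\ref{expressiontocountallpw}, coordinate reduction, Algorithm~\ref{algopw}, Corollary~\ref{convexinfhull}, Lemma~\ref{feketesApplication}) on the parameters $k=2$, $\sigma=\rho=\{1\}$, and you have spelled out precisely those steps. The lower-bound part likewise matches---the paper just names the extremal graph, and you correctly describe recovering it from the saturating matrix cycle and confirming by enumeration.
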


Using that perfect codes are $(\{0\}, \{1\})$-dominating sets we get:
\begin{Proposition}
Any graph of pathwidth $2$ and order $n$ contains at most $3^\frac{n}{3}\approx 1.44225^n$ perfect codes.
The bound is reached by the triangle.
\end{Proposition}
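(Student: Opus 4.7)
The plan is to instantiate the pathwidth framework with $\sigma=\{0\}$ and $\rho=\{1\}$, since perfect codes are exactly the $(\{0\},\{1\})$-dominating sets. The parameters from Lemma \ref{functionautomaton} are $p_\sigma=1$, $q_\sigma=1$, $p_\rho=2$, $q_\rho=1$, giving $5$ per-vertex states and thus at most $5^2=25$ initial states of $S$ for $k=2$. For each of the $12$ extension patterns (the binary choice of whether the removed vertex is old or new, crossed with the subsets of possible new edges between that vertex and $S'$), Lemma \ref{pwexistsmat} yields an explicit $25\times 25$ matrix. Restricting to the accessible and co-accessible coordinates then produces the reduced data $\widetilde{A}$, $\widetilde{\vect{v}_{\sigma,\rho}}$, $\widetilde{P}$ to which the convex-hull machinery of Subsection~\ref{secbounds} applies.

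Next I would run Algorithm \ref{algopw} on these reduced data with target $\alpha=3^{1/3}$, searching for a finite $X$ such that $\widetilde{\vect{v}_{\sigma,\rho}}\in\operatorname{conv}_\le(X)$ and $\frac{1}{\alpha}M\vect{x}\in\operatorname{conv}_\le(X)$ for every $M\in\widetilde{A}$ and every $\vect{x}\in X$. If such $X$ is found, Corollary \ref{convexinfhull} gives an $O(\alpha^n)$ bound on $\vect{p}\cdot(\prod_{i}M_i)\widetilde{\vect{v}_{\sigma,\rho}}$ uniformly over $\vect{p}\in\widetilde{P}$ and all matrix sequences. Since perfect-code counts are multiplicative under disjoint unions (which preserve pathwidth at most $2$), Lemma \ref{feketesApplication} then upgrades this to $\#_{\sigma,\rho}(n)\le 3^{n/3}$ for all $n$.

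For sharpness, the triangle $K_3$ has pathwidth $2$: it is obtained by extending a two-vertex empty $2$-distinguished graph by a single new vertex joined to both originals, then completing with the remaining edge between the two originals. Its perfect codes are exactly the three singletons, since each $\{v\}$ satisfies $|N(v)\cap D|=0\in\sigma$ and $|N(u)\cap D|=1\in\rho$ for both $u\neq v$, while $\emptyset$ fails $\rho$ and every set of size at least $2$ forces some pair of adjacent vertices in $D$, violating $\sigma=\{0\}$. Since $3=3^{3/3}$, the triangle attains the bound at $n=3$, and disjoint unions of triangles attain it along $n\equiv 0\pmod 3$.

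The expected main obstacle is the non-guaranteed termination of Algorithm \ref{algopw}. If it fails to stabilize from the natural seed $\{\widetilde{\vect{v}_{\sigma,\rho}}\}$ at $\alpha=3^{1/3}$, I would follow the strategy already used earlier in the paper for minimal total dominating sets and inject additional hand-selected vectors into the initial $X$, identified by inspecting the matrix-product sequences that realize the triangle extremum, so as to force the iteration to close. Given the very clean combinatorial structure of the triangle example and the modest state space, I would expect the algorithm to succeed with only a small amount of such seeding, if any.
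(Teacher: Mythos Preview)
Your proposal is correct and follows essentially the same approach as the paper: the paper presents this proposition simply as an output of the general method applied to $\sigma=\{0\}$, $\rho=\{1\}$ at pathwidth $2$, with the computation delegated to the C++ implementation and no further details given. Your write-up faithfully unpacks that pipeline (state count, the twelve extension matrices, Algorithm~\ref{algopw} at $\alpha=3^{1/3}$, Corollary~\ref{convexinfhull} plus Lemma~\ref{feketesApplication}, and the triangle for sharpness), so there is nothing to correct.
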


Using that induced matchings are $(\{1\},\mathbb{N})$-dominating sets, we get:
\begin{Proposition}
Any graph of pathwidth $2$ and order $n$ contains at most $4^\frac{n}{3}\approx 1.58740^n$  induced matchings.
The bound is reached by the triangle.
\end{Proposition}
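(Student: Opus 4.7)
The plan is to apply the general machinery built in the previous sections to the pair $(\sigma,\rho)=(\{1\},\mathbb{N})$ with pathwidth parameter $k=2$, targeting the growth rate $\alpha=4^{1/3}$.

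First I would invoke Lemma \ref{expressiontocountallpw} for pathwidth $2$ and the recognizable pair $\sigma=\{1\}$, $\rho=\mathbb{N}$. This gives an integer $m$, a finite set $A$ of $m\times m$ nonnegative integer matrices (at most $12$ of them, one per possible extension in a $2$-distinguished graph), a start vector $\vect{v}_{\sigma,\rho}$, and a finite set of completion vectors $P$, such that the number of induced matchings of any graph of pathwidth $2$ and order $n>2$ is of the form $\vect{p}\cdot(\prod M_i)\vect{v}_{\sigma,\rho}$. I would then prune to accessible and co-accessible coordinates to obtain $\widetilde{A}$, $\widetilde{\vect{v}_{\sigma,\rho}}$, and $\widetilde{P}$, as described before Theorem \ref{convexhull}. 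Concretely, the states in which a vertex is in $D$ with at least two neighbors in $D$ are not co-accessible (they cannot be completed into an induced matching), so they will be removed; the dimension of $\widetilde{A}$ will be at least $9$ as announced in the text.

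Next I would run Algorithm \ref{algopw} on $(\widetilde{A},\widetilde{\vect{v}_{\sigma,\rho}})$ with $\alpha=4^{1/3}$. The expectation is that the algorithm terminates and outputs a finite set $X\subseteq(\mathbb{R}^+)^m$ such that $\widetilde{\vect{v}_{\sigma,\rho}}\in\operatorname{conv}_\le(X)$ and $\frac{1}{\alpha}M\vect{x}\in\operatorname{conv}_\le(X)$ for every $\vect{x}\in X$ and every $M\in\widetilde{A}$. Once such an $X$ is produced, Corollary \ref{convexinfhull} yields a constant $C$ with $\#_{\sigma,\rho}(n)<C\alpha^n$ for all $n$, and then Lemma \ref{feketesApplication} (valid here because the disjoint union of two graphs of pathwidth $\le 2$ is still of pathwidth $\le 2$ and the number of induced matchings multiplies over disjoint unions) upgrades this to the clean bound $\#_{\sigma,\rho}(n)\le \alpha^n=4^{n/3}$.

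Finally, for sharpness I would exhibit the triangle $K_3$: its induced matchings are the empty set and each of its three edges (no two edges form an induced matching since they share a vertex or leave a chord), giving $4=4^{3/3}$ induced matchings; iterating, the disjoint union of $n/3$ triangles has pathwidth $2$ and exactly $4^{n/3}$ induced matchings, so the bound is attained for every $n$ divisible by $3$. The only genuine obstacle is that termination of Algorithm \ref{algopw} is not guaranteed a priori by the theory: the main nontrivial step is thus the explicit finite certificate $X$ produced by the C++ implementation, whose correctness reduces to a finite number of linear-programming checks that each $\frac{1}{\alpha}M\vect{x}$ lies in $\operatorname{conv}_\le(X)$.
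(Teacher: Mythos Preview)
Your proposal is correct and follows exactly the approach the paper intends: compute the set $\widetilde{A}$ for $(\sigma,\rho)=(\{1\},\mathbb{N})$ at pathwidth $2$ via Lemma~\ref{expressiontocountallpw}, run Algorithm~\ref{algopw} with $\alpha=4^{1/3}$ to obtain the finite certificate $X$ for Corollary~\ref{convexinfhull}, and then apply Lemma~\ref{feketesApplication} to remove the constant; the triangle (and disjoint unions thereof) witnesses sharpness. The paper gives no further detail for this proposition beyond ``run the C++ code'', so your write-up is in fact more explicit than the paper's own treatment.
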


\paragraph{Maximal induced matching}
As already mentionned, maximal induced matchings correspond to $2$-maximal $(\{1\},\mathbb{N})$-dominating sets, 
and we did not explain how to count them. 
However, we can use exactly the same set of states than for the graphs of pathwidth $1$ (see paragraph of the same name in Section \ref{examplepw1}).
We do not describe the details but they are implemented in C++.
We can deduce the following Theorem:
\begin{Lemma}
Any graph of pathwidth $2$ and order $n$ contains at most $5^\frac{n}{4}\approx 1.49535^n$ maximal induced matchings.
Moreover the bound is reached by the following graph:
\begin{figure}[H]
\centering
\includegraphics{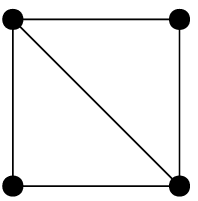}
\end{figure}
\end{Lemma}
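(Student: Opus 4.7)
The plan is to follow the same strategy as in the pathwidth~1 case of maximal induced matchings described earlier in Section~\ref{examplepw1}, but lifted to $2$-distinguished graphs. I would reuse verbatim the five states for a vertex introduced there (in/out of $D$, with the various ``certificate''-like flags indicating whether it already has a neighbor in $D$, or a neighbor in $V\setminus S$ with no neighbor in $D$). For a $2$-distinguished graph the state of the pair of distinguished vertices lives in $\{0,\ldots,4\}^2$, giving a vector $\Psi(g)\in\mathbb{N}^{25}$ whose $(i,j)$-th coordinate counts the maximal induced matchings of $g$ in which the two distinguished vertices are in states $i$ and $j$ respectively (with respect to the order $<_S$).

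Next, I would enumerate the $12$ possible extensions of a $2$-distinguished graph (choice of which distinguished vertex is replaced by the new one, combined with the subsets of edges between the new vertex $n$ and the remaining distinguished vertex(es)) and, by an exhaustive case analysis essentially identical to the one carried out for the single matrix in the pathwidth~1 proof, write down a linear operator $M\in\mathbb{N}^{25\times25}$ realizing $\Psi(g')=M\Psi(g)$ for each extension. A similar case analysis yields the completion vector(s) $\vect{p}\in\mathbb{N}^{25}$. After restricting to the accessible and co-accessible coordinates as in the beginning of Section~\ref{secbounds}, we obtain $\widetilde{A},\widetilde{\vect{v}_{\sigma,\rho}},\widetilde{P}$.

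With $\alpha=5^{1/4}$, the plan is then to invoke Algorithm~\ref{algopw} to produce a finite bounded set $X\subseteq\mathbb{R}^{+\,\widetilde{m}}$ satisfying $\widetilde{\vect{v}_{\sigma,\rho}}\in\operatorname{conv}_\le(X)$ and $\frac{1}{\alpha}M\vect{x}\in\operatorname{conv}_\le(X)$ for every $M\in\widetilde{A}$ and $\vect{x}\in X$; Corollary~\ref{convexinfhull} together with Lemma~\ref{feketesApplication} (which applies since the number of maximal induced matchings is multiplicative on disjoint unions) would then directly yield the $5^{n/4}$ upper bound. Since the case analysis, matrix bookkeeping and convex-hull computation are mechanical but lengthy, this step is delegated to the C++ implementation already discussed.

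The main obstacle is computational rather than conceptual: the transition matrices are roughly an order of magnitude larger than in pathwidth~1, the entries of $X$ live in $\mathbb{Q}(5^{1/4})$, and the convex-hull/linear-programming tests of Algorithm~\ref{algopw} can blow up quickly. One must therefore prune aggressively (e.g.\ discard matrices dominated coordinate-wise by another, exploit symmetry of $<_S$) just to make the algorithm terminate. For sharpness, the idea is to look at which extremal vectors of the final $X$ are repeatedly hit by the products $\prod M_i$ reaching the bound and read off the corresponding $k$-distinguished graph: concatenating the resulting $4$-vertex gadget (which already contributes a factor of exactly $5$ per $4$ vertices in terms of maximal induced matchings) gives the depicted graph, and a direct count confirms that it attains $5^{n/4}$ maximal induced matchings, closing the bound.
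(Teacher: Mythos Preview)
Your proposal is correct and follows essentially the same approach as the paper: the paper likewise lifts the five-state encoding to $2$-distinguished graphs, delegates the matrix generation and convex-hull computation to the C++ implementation, and reports that Algorithm~\ref{algopw} with $\alpha=5^{1/4}$ terminates with a set $X$ of $386$ vectors (in over five hours on a small laptop) satisfying the hypotheses of Corollary~\ref{convexinfhull}. Sharpness via the depicted $4$-vertex gadget is handled exactly as you outline.
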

The set $X$ that our program returns contains 386 vectors and it takes more than 5 hours to find it on a small laptop. 

\paragraph{Minimal dominating set}
Minimal dominating sets are probably among the most interesting minimal $(\Sigma,\rho)$-dominating sets to study.
Unfortunately, Algorithm \ref{algopw} does not seem to find the set $X$ needed for Corollary \ref{convexinfhull}
and we were not able to show a sharp bound on the groth rate by directly applying our method
We explain in Section \ref{sectionjspr} that we could get good approximation of the upper bound anyway.
However, with some extra work we can still obtain a sharp bound.

We can define states that are specific to this problem (we have 37 states instead of 165) and we are able to find a set $X$ that respects the conditions of Corollary \ref{convexinfhull} 
if we initialize Algorithm \ref{algopw} with a larger set of vectors.
One of the main improvement is that instead of choosing the certificates from the start and checking that they are indeed certificates, we choose them only when we need to.
Something similar could be done in general, but we need to introduce new states for $S$ where we do not know exactly the states of the vertices 
(something similar to the 37th state: ``the two nodes are dominated and at least one of them is a certificate''). 
In general, it might not be helpfull to introduce these new states, but it definitely makes things easier in this particular case.

We get the following result:
\begin{Proposition}
 The number of minimal dominating sets of a graph of pathwidth 2 of order $n$ is at most $6^{\frac{n}{4}}\approx 1.56508^n$.
 This bounds is sharp and is reached by any collection of cycles of order $4$.
\end{Proposition}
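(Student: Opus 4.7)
The plan is to apply Lemma \ref{expressiontocountminpw} with $\sigma = \mathbb{N}$ and $\rho = \mathbb{N}^+$ (so that $1$-minimal $(\sigma,\rho)$-dominating sets coincide with minimal dominating sets), specialized to pathwidth $k=2$, then invoke Corollary \ref{convexinfhull} with $\alpha = 6^{1/4}$, and finally exhibit $C_4$ (with pathwidth $2$) as the extremal graph.

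The first step is to set up a state space tailored to this instance rather than reusing the generic one of the Counting-min subsection. With $\sigma = \mathbb{N}$ and $\rho = \mathbb{N}^+$ we have $p = q = 1$, and many of the generic triples are vacuous or unreachable; moreover, in this setting a vertex $v\in D$ is self-certified iff it has no neighbor in $D$, and a non-$D$ vertex is a certificate iff it has exactly one neighbor in $D$. The key simplification is to defer the commitment to certificates: for a vertex $v\in S$ that is dominated, instead of deciding from the outset whether $v$ serves as a private neighbor of some vertex of $D$, I would record a noncommittal state ``$v\notin D$, dominated, possibly a private neighbor for some vertex of $D$''. The choice is resolved only when $v$ leaves $S$, at which moment its neighborhood count becomes final and compatibility can be checked. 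This cuts the number of joint states of the ordered pair $S$ from the generic $165$ down to $37$, while keeping all transitions linear, so Lemma \ref{pwexistsmat} and Lemma \ref{expressiontocountminpw} still apply and produce a reduced matrix set $\widetilde{A}$ and seed $\widetilde{\vect{v}}_{\sigma,\rho,\min}$.

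The second step is to run Algorithm \ref{algopw} on this reduced system with $\alpha = 6^{1/4}$. As the paper already notes, the single seed $\widetilde{\vect{v}}_{\sigma,\rho,\min}$ is insufficient for termination, so I would precompute additional seed vectors by hand, taking for instance the values $\Psi_{\sigma,\rho,\min}(g)$ attained at the intermediate $2$-distinguished graphs obtained while building a copy of $C_4$ through extensions. Each such vector must lie in $\operatorname{conv}_\le(X)$ for any valid $X$, so including it from the start short-circuits the failure mode that blocks the iteration from closing. Once the algorithm terminates with a finite $X$ satisfying the hypotheses of Corollary \ref{convexinfhull}, the corollary combined with Lemma \ref{feketesApplication} yields the desired upper bound $6^{n/4}$ for every $n$.

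For the sharpness claim I would verify directly that $C_4$ has exactly six minimal dominating sets (the four edges $\{i,i+1\}$ and the two diagonals $\{1,3\},\{2,4\}$), observe that $C_4$ has pathwidth $2$, and use the multiplicativity of the count over disjoint unions (exploited already in Lemma \ref{feketesApplication}) to deduce that the disjoint union of $n/4$ copies of $C_4$ has pathwidth $2$ and exactly $6^{n/4}$ minimal dominating sets. The main obstacle is the first step: both the reduced encoding and the augmented seed are required, since with the generic states and the default initialization the convex-hull iteration does not stabilize at $\alpha = 6^{1/4}$. This is precisely why the paper singles out minimal dominating sets as a case that Algorithm \ref{algopw} does not handle out of the box.
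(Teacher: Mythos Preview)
Your overall strategy matches the paper's: tailor a reduced state space, seed Algorithm~\ref{algopw} so that it terminates at $\alpha=6^{1/4}$, then invoke Corollary~\ref{convexinfhull} together with Lemma~\ref{feketesApplication}, and verify sharpness on $C_4$. Two points, however, need correction.

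First, the reduced encoding you sketch does not quite match the paper's. The paper keeps six per-vertex labels (three for $v\in D$ according to whether $v$ has a certificate, no neighbor in $D$, or a neighbor in $D$ but no certificate yet; three for $v\notin D$ according to whether $v$ is declared the certificate of some vertex of $D\cap S$, is dominated, or is undominated), takes the $36$ ordered pairs, and adjoins one extra \emph{joint} state $\pi$ meaning ``both vertices are dominated and at least one of them is a certificate''. That is where the deferral of certificate choice actually happens, not in a per-vertex noncommittal label as you describe. You would need to check that your variant really closes under all twelve extension matrices.

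Second, and more seriously, your seeding proposal does not work as justified. You want to add the vectors $\Psi_{\sigma,\rho,\min}(g)$ for the intermediate $2$-distinguished graphs on the way to $C_4$, claiming that any valid $X$ must already contain them in $\operatorname{conv}_\le(X)$. That is false: the induction behind Theorem~\ref{convexhull} only gives $\bigl(\prod_i M_i\bigr)\vect{v}/\alpha^{\,n-k}\in\operatorname{conv}_\le(X)$, i.e.\ the \emph{rescaled} $\Psi$-values, not $\Psi(g)$ itself. Worse, even without the justification, these seeds are (scalar multiples of) iterates the algorithm generates anyway, so they do not enlarge the eventual hull in any new direction and give no reason to expect termination. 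The paper instead seeds with a set $X_0$ of $23$ scaled coordinate vectors $4e_i$, found by trial and error; these are \emph{not} reachable by the dynamics and genuinely widen the polytope, after which Algorithm~\ref{algopw} closes on a set $X$ of $131$ points. Without this artificial enlargement the iteration is observed not to terminate at $\alpha=6^{1/4}$, which is precisely the obstacle you acknowledge but do not overcome.
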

\begin{proof}
Given a 2-distinguished graph  $g=(G(V,E),S,<_{S})$ and a minimal-dominating set $D$ of $g$ for $v\in S$ the state of $v$ can be:
\begin{description}[align=left]
\item [D] if $v\in D$ and has a neighbor that is a certificate,
\item [S] if $v\in D$ and has no neighbor in $D$,
\item [L] if $v\in D$ and has a neighbor in $D$,
\item [P] if $v\not\in D$ and is the certificate of an element of $D\cap S$,
\item [d] if $v\not\in D$ and has a neighbor in $D$,
\item [F] if $v\not\in D$ has no neighbor in $D$.
\end{description}
The state of $S$ is either the product of the states of the two elements of $S$ (that is $S=\{u,v\}$ with $u<v$ is in state $6i+j$ iff $u$ is in state $i$ and $v$ is in state $j$) or the special state $\pi$.
If $S$ is in state $\pi$ then at least one element of $S$ is the certificate of an element of $D\cap S$.

This states are such that given for any state of $S$ the number of minimal dominating sets of a 2-distinguished graph $g=(G(V,E),S,<_{S})$, 
one can easily compute for any state of $S'$ the number of minimal dominating sets of a 2-distinguished graph $g'=(G'(V',E'),S',<_{S'})$ obtained by extending $g$.
Moreover, one can once again compute a set of matrices that correspond to the different possible ways to extend a 2-distinguished graph.
The details are left to the interested reader (however the matrices can be obtained from the C++ code).

Now that we have our set of matrices we only need a set $X$ that respects the conditions of Corollary \ref{convexinfhull} with $\alpha=6^{\frac{1}{4}}$.
However, once again Algorithm \ref{algopw} does not seem to terminate in this case.
The solution is to apply Alorithm \ref{algopw} starting with some other vertices. Let 
\begin{align*}
 X_0=&\{e_{1}, e_{4}, e_{5}, e_{10}, e_{15}, e_{16}, e_{17}, e_{18}, e_{19}, e_{20}, e_{22}, e_{23}, e_{24},\\
 &e_{25}, e_{26}, e_{27}, e_{28}, e_{29}, e_{30}, e_{33}, e_{34}, e_{35}, e_{36}\},
\end{align*}
where $e_i$ is the vector from $\mathbb{R}^{37}$ whose $i$th coordinate is $4$ and the others are $0$ and let $v_0$ be the vector corresponding to $g_0=(G(S,\emptyset),S)$.
Then running Algorithm \ref{algopw} with $X$ initilizalied at $X_0\cup\{v_0\}$ gives a set $X$ of size $131$ that respects the conditions of Corollary \ref{convexinfhull} with $\alpha=6^{\frac{1}{4}}$.
This concludes the proof.
\end{proof}
We provide with our C++ implementation a file \footnote{\texttt{dom\_pw2\_in}} that contains the set $X_0$, to avoid the tedious task of typing $23$ vectors in a terminal.
Remark, that it also works with $X'_0=\{e_i|i\in\{1,\ldots,37\}\}$, but the $X_0$ from the proof gives a smaller final set $X$. 
The set $X_0$ was constructed by removing vertices from $X'_0$ by trial and error (and it is probably not even the ``best'' subset of $X'_0$).

\subsubsection{Joint spectral radius}\label{sectionjspr}
As already mentioned the algorithm that computes the set $X$ does not necessarily terminates. 
However, using the notion of joint spectral radius we can still computes bounds in this case.

For any given set of matrices $A$ and any  sub-multiplicative matrix norm $||.||$, the quantity $p(A)=\lim_{n\rightarrow\infty}\max\{||M_1\ldots M_n||^{\frac{1}{k}}: M_i\in A\}$
is the \emph{joint spectral radius of the set $A$} \cite{blondel_jungers_2010}. 
Remark that $p(A)$ does not depend on the chosen  sub-multiplicative matrix norm.

In fact, given a set of matrices $A\in \mathbb{R}^{+n\times n}$, a vector $\vect{v}_{\sigma,\rho}\in \mathbb{R}^{+n}$ and a  set of vectors $P\in \mathbb{R}^{+n}$ such that all the coordinates are accessible and co-accessible then  $p(A)$ is the smallest real such that:
$$\lim_{m\rightarrow \infty}\left(\max \left\{ \vect{p}\cdot \left(\prod_{i=1}^{m-k}M_i\right) \vect{v}_{\sigma,\rho}:\vect{p}\in P , (M_i)_{1\le i\le m-k}\in A^{m-k}\right\}\right)^{1/m} = p(A) $$

The  joint spectral radius generalizes the notion of spectral radius to set of matrices, but we lose a lot of nice properties of the spectral radius.
In particular, the spectral radius is easy to express as a root of a polynomial, but the joint spectral radius is much harder to compute.
However, there are approximation algorithms for the joint spectral radius that run in exponential time \cite{blondel_jungers_2010}. 
Thus we can always obtain arbitrarily good bounds for the number of $(\sigma, \rho)$-dominating sets in bounded pathwidth.

\subsection{Forests and trees}
Let $\sigma$ and $\rho$ be two recognizable sets of positive integers and  $\#_{(\sigma,\rho)}(n)$ be the maximal number of (all, 1-minimal, 1-maximal) $(\sigma,\rho)$-dominating sets in forests (or trees) of order $n$.
Then we can use Lemma \ref{expressiontocountalltrees}(or Lemmas \ref{expressiontocountmintree}, \ref{expressiontocountmaxtree}, \ref{expressiontocountallforests}, \ref{expressiontocountminforest}, \ref{expressiontocountmaxforest})
to find an integer $m$, two vectors $\vect{v},\vect{p}\in \mathbb{N}^{m}$ and a
set of bilinear maps $A\subseteq (\mathbb{N}^m)^{\mathbb{N}^m\times \mathbb{N}^m}$ (a singleton for trees and a pair for forests) such that:

If  $F\subseteq (\mathbb{N}^{n}\times\mathbb{N})$ is the smallest set such that
\begin{itemize}
 \item $(\vect{v} , 1)\in F$,
 \item for any $(\vect{w},i),(\vect{u},j)\in F$ and $\Phi\in A$, $(\Phi(\vect{w},\vect{u}),i+j)\in F$.
\end{itemize}
then there is a tree (or a forest) of order $n$ that admits $N$ (1-minimal, 1-maximal) $(\sigma,\rho)$-dominating sets if and only if
there exists $\vect{u}$ such that $(\vect{u},n)\in F$ and $N= \vect{p}\cdot \vect{u}$.
Thus in particular $\#_{\sigma,\rho}(n) =\max \{\vect{p}\cdot \vect{u} : (\vect{u},n)\in F\}$.
In this subsection we explain how we compute this quantity.

We say that $i$-th coordinate is \emph{accessible} if there is $(\vect{u},k)\in F$ such that the $i$-th coordinate of $\vect{u}$ is non zero.
Let $\vect{e_i}$ be the vector whose $i$th coordinate is $1$ and the others are $0$. 
We define inductively the set of \emph{co-accessible} coordinates:
\begin{itemize}
 \item if the $i$-th coordinate of $\vect{p}$ is non zero then $i$ is co-accessible,
 \item if for some $\phi\in A$ one of the co-accessible coordinate of $\Phi(\vect{e_i}, \vect{e_j})$ is non zero and the $j$-th coordinate is accessible then 
 the $i$-th coordinate is co-accessible,
 \item if for some $\phi\in A$ one of the co-accessible coordinate of $\Phi(\vect{e_i}, \vect{e_j})$ is non zero and the $i$-th coordinate is accessible then 
 the $j$-th coordinate is co-accessible.
\end{itemize}

Note that the sets of accessible and co-accessible coordinates can easily be computed by a recursive algorithm.
The motivation of these definitions is that we can ignore coordinates that are not accessible and co-accessible since they do not influence the result.

Let $\widetilde{m}$ be the number of accessible and co-accessible coordinates.
Let $h$ be the endomorphism that maps the $i$-th coordinate to the $i$-th  accessible and co-accessible coordinate and $h^T$ 
be the endomorphism that maps the $i$-th accessible and co-accessible coordinate to the $i$-th coordinate. 
Let $\widetilde{A}=\{h\circ\Phi\circ(h^T\times h^T): \Phi\in A\}$, $\widetilde{\vect{v}}=h(\vect{v})$ and $\widetilde{\vect{p}}=h(\vect{p})$.
Finally let $\widetilde{F}\subseteq (\mathbb{N}^{n}\times\mathbb{N})$ be the smallest set such that:
\begin{itemize}
 \item $(\vect{v} , 1)\in \widetilde{F}$,
 \item for any $(\vect{w},i),(\vect{u},j)\in \widetilde{F}$ and $\Phi\in \widetilde{A}$, $(\Phi(\vect{w},\vect{u}),i+j)\in \widetilde{F}$.
\end{itemize}
We get:
$$\#_{\sigma,\rho}(n) =\max \{\widetilde{\vect{p}}.\vect{u} : (\vect{u},n)\in \widetilde{F}\}\,.$$

We can finally explain how to compute this quantity.
For any set of points $X$, we denote by $\operatorname{conv}(X)$ the convex hull of $X$.
We can now give the following Theorem:
\begin{Theorem}\label{convexhullTree}
Let $\alpha$ be a positive real, $\widetilde{A}\subseteq (\mathbb{N}^m)^{\mathbb{N}^m\times \mathbb{N}^m}$, $\widetilde{\vect{v}}\in\mathbb{R^+}^m$ and $\widetilde{F}$ be the smallest set such that:
\begin{itemize}
 \item $(\widetilde{\vect{v}} , 1)\in \widetilde{F}$,
 \item for any $(\vect{w},i),(\vect{u},j)\in \widetilde{F}$ and $\Phi\in \widetilde{A}$, $(\Phi(\vect{w},\vect{u}),i+j)\in \widetilde{F}$.
\end{itemize}

If there is a bounded set of vectors $X\subseteq\mathbb{R}^m$  such that:
\begin{itemize}
 \item $\frac{\widetilde{\vect{v}}}{\alpha}\in\operatorname{conv}(X)$,
 \item $\forall \vect{x},\vect{x'}\in X$ and for any $\Phi\in \widetilde{A}$, $ \Phi(\vect{x},\vect{x'})\in\operatorname{conv}(X)$.
\end{itemize}
Then for any $\vect{p}\in\mathbb{R^+}^m$ and for all integers $n$
$$\max \left\{ \left|\vect{p}\cdot u\right|:(u,n)\in \widetilde{F}\right\}\le \alpha^n \max_{ x\in X}|\vect{p}\cdot x|$$
\end{Theorem}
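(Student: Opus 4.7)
The plan is to prove the bound by induction on the definition of $\widetilde{F}$, establishing the stronger invariant that $\frac{u}{\alpha^n}$ lies in $\operatorname{conv}(X)$ for every $(u,n) \in \widetilde{F}$. Once this is established, the bound follows immediately since a linear functional attains its maximum over $\operatorname{conv}(X)$ at an extreme point, hence at some element of $X$.

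First I would set up the induction. The base case is $(\widetilde{\vect{v}},1) \in \widetilde{F}$, and $\frac{\widetilde{\vect{v}}}{\alpha} \in \operatorname{conv}(X)$ is exactly the first hypothesis. For the inductive step, suppose $(\vect{w},i), (\vect{u},j) \in \widetilde{F}$ satisfy $\frac{\vect{w}}{\alpha^i}, \frac{\vect{u}}{\alpha^j} \in \operatorname{conv}(X)$ and $\Phi \in \widetilde{A}$. By bilinearity of $\Phi$,
$$\frac{\Phi(\vect{w},\vect{u})}{\alpha^{i+j}} = \Phi\!\left(\frac{\vect{w}}{\alpha^i}, \frac{\vect{u}}{\alpha^j}\right),$$
so I would need to show that the right hand side lies in $\operatorname{conv}(X)$.

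The key lemma to establish here is that $\operatorname{conv}(X)$ is closed under $\Phi$ even though the hypothesis only gives closure on $X$. This follows from bilinearity: writing $\frac{\vect{w}}{\alpha^i} = \sum_k \lambda_k \vect{x}_k$ and $\frac{\vect{u}}{\alpha^j} = \sum_\ell \mu_\ell \vect{y}_\ell$ with $\vect{x}_k, \vect{y}_\ell \in X$ and convex coefficients, one expands
$$\Phi\!\left(\frac{\vect{w}}{\alpha^i}, \frac{\vect{u}}{\alpha^j}\right) = \sum_{k,\ell} \lambda_k \mu_\ell\, \Phi(\vect{x}_k, \vect{y}_\ell),$$
and since $\sum_{k,\ell} \lambda_k \mu_\ell = 1$ and each $\Phi(\vect{x}_k,\vect{y}_\ell) \in \operatorname{conv}(X)$ by the second hypothesis, the result is a convex combination of points of $\operatorname{conv}(X)$, hence is itself in $\operatorname{conv}(X)$. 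This completes the induction.

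Finally, for any $(u,n) \in \widetilde{F}$ we have $\frac{u}{\alpha^n} \in \operatorname{conv}(X)$, so
$$|\vect{p}\cdot u| = \alpha^n \left|\vect{p} \cdot \frac{u}{\alpha^n}\right| \le \alpha^n \max_{\vect{x}\in\operatorname{conv}(X)} |\vect{p}\cdot \vect{x}| = \alpha^n \max_{\vect{x} \in X} |\vect{p} \cdot \vect{x}|,$$
where the final equality uses that a linear function on a bounded convex hull attains its maximum on the generating set. I expect the only mildly delicate point to be the expansion step using bilinearity to lift closure from $X$ to $\operatorname{conv}(X)$; everything else is direct from the definitions.
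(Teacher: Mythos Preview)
Your proposal is correct and follows essentially the same approach as the paper: both prove by induction on the construction of $\widetilde{F}$ that $\frac{u}{\alpha^n}\in\operatorname{conv}(X)$ for every $(u,n)\in\widetilde{F}$, using bilinearity to expand $\Phi$ of two convex combinations into a convex combination of values $\Phi(x,y)\in\operatorname{conv}(X)$, and then conclude by bounding the linear form $\vect{p}\cdot(\,\cdot\,)$ on $\operatorname{conv}(X)$ by its supremum on $X$. The point you flag as ``mildly delicate'' is exactly the one the paper singles out as well.
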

\begin{proof}
The proof is mostly trivial manipulations on convex sets.
First, note that for any $\Phi\in A$, $\operatorname{conv}(\{\Phi(x,y): x,y\in X\})\subseteq\operatorname{conv}(X)$.

Let us show by induction on the elements of $F$ that for all $(\vect{u},n)\in \widetilde{F}$,
$\frac{\vect{u}}{\alpha^n}\in  \operatorname{conv}(X)$.
For any $(\vect{u},n)\in F$ there are two possibilities:
\begin{itemize}
 \item $(\vect{u},n)=(\widetilde{\vect{v}},1)$, then by definition of $ \widetilde{F}$, $\frac{\widetilde{\vect{v}}}{\alpha}\in\operatorname{conv}(X)$.
 \item $\vect{u}=\Phi(\vect{u_1},\vect{u_2})$ with $(\vect{u_1},i)\in  \widetilde{F}$, $(\vect{u_2},j)\in  \widetilde{F}$, $i+j=n$ and $\Phi\in \widetilde{A}$.
 By induction hypothesis there are two functions $f_1, f_2:X\mapsto[0,1]$ such that:
 $\sum_{x\in X} f_1(x)=\sum_{x\in X} f_2(x)=1$, $\frac{\vect{u_1}}{\alpha^{i}}=\sum_{x\in X} f_1(x) x$ and $\frac{\vect{u_2}}{\alpha^{j}}=\sum_{x\in X} f_2(x) x$.
 By bilinearity of $\Phi$ we get:
$$  \frac{\vect{u}}{\alpha^n}=\frac{\Phi(\vect{u_1},\vect{u_2})}{\alpha^n}
  =\Phi\left(\frac{\vect{u_1}}{\alpha^i}, \frac{\vect{u_2}}{\alpha^j}\right)
  =\sum_{x\in X}\sum_{y\in X}f_1(x)f_2(y)\Phi(x,y)$$
  Moreover, $\sum_{x\in X}\sum_{y\in X}f_1(x)f_2(y)=1$ implies that $\frac{\vect{u}}{\alpha^n}\in\operatorname{conv}(\{\Phi(x,y): x,y\in X\})\subseteq\operatorname{conv}(X)$.  
\end{itemize}

Now we know that for all $(\vect{u},n)\in \widetilde{F}$,
$\frac{\vect{u}}{\alpha^n}\in  \operatorname{conv}(X)$. 
It implies that there is a function $f:X\mapsto[0,1]$ such that:
 $\sum_{x\in X} f(x)=1$ and $\frac{\vect{u}}{\alpha^{n}}=\sum_{x\in X} f(x) x$.
Thus $|\vect{p}\cdot\frac{\vect{u}}{\alpha^{n}}|=|\sum_{x\in X} f(x) \vect{p}\cdot x| \le \max_{ x\in X}|\vect{p}\cdot x|$.
We finally get $$|\vect{p}\cdot \vect{u}|\le \alpha^n \max_{ x\in X}|\vect{p}\cdot x|\,.$$
This concludes the proof.
\end{proof}

This Theorem is an analogue to Theorem \ref{convexhull} and we can also obtain the analogue to Corollary \ref{convexinfhull}:
\begin{Corollary}\label{convexinfhullTree}
Let $\alpha$ be a positive real, $\widetilde{A}\subseteq (\mathbb{N}^m)^{\mathbb{N}^m\times \mathbb{N}^m}$, $\widetilde{\vect{v}}\in\mathbb{R^+}^m$ and $\widetilde{F}$ be the smallest set such that:
\begin{itemize}
 \item $(\widetilde{\vect{v}} , 1)\in \widetilde{F}$,
 \item for any $(\vect{w},i),(\vect{u},j)\in \widetilde{F}$ and $\Phi\in \widetilde{A}$, $(\Phi(\vect{w},\vect{u}),i+j)\in \widetilde{F}$.
\end{itemize}

If there is a bounded set of vectors $X\subseteq\mathbb{R}^m$  such that:
\begin{itemize}
 \item $\frac{\widetilde{\vect{v}}}{\alpha}\in\operatorname{conv}_\le(X)$,
 \item $\forall \vect{x},\vect{x'}\in X$ and for any $\Phi\in \widetilde{A}$, $ \Phi(\vect{x},\vect{x'})\in\operatorname{conv}_\le(X)$.
\end{itemize}
Then for any $\vect{p}\in\mathbb{R^+}^m$ and for all integers $n$
$$\max \left\{ \left|\vect{p}\cdot u\right|:(u,n)\in \widetilde{F}\right\}\le \alpha^n \max_{ x\in X}|\vect{p}\cdot x|$$
\end{Corollary}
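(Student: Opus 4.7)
The plan is to reduce Corollary \ref{convexinfhullTree} to Theorem \ref{convexhullTree} by applying the latter with the set $Y := \operatorname{conv}_\le(X)$ playing the role of $X$. First I would check that $Y$ is bounded---it sits inside the box determined by the coordinate-wise maximum of $X$---and that $Y$ is convex: if $\vect{y}_i \le \vect{z}_i \in \operatorname{conv}(X)$ for $i=1,2$, then $\lambda \vect{y}_1 + (1-\lambda)\vect{y}_2 \le \lambda \vect{z}_1 + (1-\lambda)\vect{z}_2 \in \operatorname{conv}(X)$. Since $Y$ is convex we have $\operatorname{conv}(Y) = Y$, and the hypothesis $\widetilde{\vect{v}}/\alpha \in \operatorname{conv}_\le(X)$ is precisely the first condition of the theorem applied to $Y$.

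The core step is verifying that each $\Phi \in \widetilde{A}$ sends $Y \times Y$ into $Y$, and I would do this in two substeps. First, given $\vect{x}, \vect{x'} \in Y$, choose witnesses $\vect{z}, \vect{z'} \in \operatorname{conv}(X)$ with $\vect{x} \le \vect{z}$ and $\vect{x'} \le \vect{z'}$, expand each as a convex combination of elements of $X$, and use bilinearity of $\Phi$ to express $\Phi(\vect{z}, \vect{z'})$ as a convex combination of terms of the form $\Phi(\vect{x}_i, \vect{x}'_j)$ with $\vect{x}_i, \vect{x}'_j \in X$. Each such term lies in $Y$ by the corollary's hypothesis, and convexity of $Y$ then places $\Phi(\vect{z}, \vect{z'})$ itself in $Y$. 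Second, the bilinear maps in $\widetilde{A}$ have non-negative coefficients (they arise from counting, as witnessed by the explicit formula for $\Psi_{\sigma,\rho}(g)_i$ given in the proof of the bilinear composition lemma), hence they are monotone on non-negative inputs: $\Phi(\vect{x}, \vect{x'}) \le \Phi(\vect{z}, \vect{z'})$, and by the definition of $\operatorname{conv}_\le$ this forces $\Phi(\vect{x}, \vect{x'}) \in Y$ as well.

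Applying Theorem \ref{convexhullTree} to $Y$ then yields $\max\{|\vect{p}\cdot \vect{u}| : (\vect{u}, n) \in \widetilde{F}\} \le \alpha^n \max_{\vect{y} \in Y} |\vect{p} \cdot \vect{y}|$, and to finish I would argue that this last maximum coincides with $\max_{\vect{x} \in X} |\vect{p} \cdot \vect{x}|$: for any $\vect{y} \in Y$ with $\vect{y} \le \vect{z}$ for some $\vect{z} \in \operatorname{conv}(X)$, the non-negativity of $\vect{p}$ gives $\vect{p} \cdot \vect{y} \le \vect{p} \cdot \vect{z}$, and a non-negative linear functional attains its maximum over a convex hull at an extreme point, i.e.\ at some element of $X$. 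The main obstacle is really the monotonicity substep: it is where the non-negativity of the vectors, of the bilinear maps' coefficients, and of $\vect{p}$ all come together, and it is the only place where relaxing $\operatorname{conv}$ to $\operatorname{conv}_\le$ has genuine content; everything else is routine manipulation of convex sets.
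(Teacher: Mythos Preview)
Your proposal is correct and follows exactly the approach the paper intends: the paper does not give a proof of this corollary but simply presents it as the analogue of Corollary~\ref{convexinfhull}, whose one-line justification is that $\operatorname{conv}_\le(X)$ is itself convex, i.e.\ apply Theorem~\ref{convexhullTree} with $Y=\operatorname{conv}_\le(X)$ in place of $X$. You have spelled out precisely the details the paper omits---boundedness and convexity of $Y$, monotonicity of $\Phi$ via non-negative coefficients, and the equality $\max_{\vect y\in Y}\vect p\cdot\vect y=\max_{\vect x\in X}\vect p\cdot\vect x$ from non-negativity of $\vect p$---so there is nothing to add.
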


Moreover, in the case of forest, we can once again use Lemma \ref{feketesApplication} to replace the constant by 1.
However, in the case of trees this lemma cannot be applied since $\#_{\sigma,\rho}(n)$ is not necessarily super-multiplicative (take $\sigma=\rho=\{0\}$ then $\#_{\sigma,\rho}(1)=2$, but $\#_{\sigma,\rho}(n)=1$ for $n>1$).

Once again there is not always such a set $X$, but in practice we can often find one with the simple following algorithm:

\begin{algorithm}[H]\label{findconvsettree}
 \KwData{$\widetilde{A}, \widetilde{\vect{v}}$, $\alpha$}
 \KwResult{A set $X$ such as in Corollary \ref{convexinfhull}}
 $X:=\{\frac{\widetilde{\vect{v}}}{\alpha}\}$\;
  \While{$\{\Phi(\vect{x},\vect{y}): \vect{x},\vect{y}\in X, \Phi\in \widetilde{A}\}\not\subseteq \operatorname{conv}_\le(X)$}{
    $X:=\operatorname{Hull}_\le(\{\Phi(\vect{x},\vect{y}): \vect{x},\vect{y}\in X, \Phi\in \widetilde{A}\}\cup X)$\;

 }
\caption{Computation of the set $X$}
\end{algorithm}
\subsubsection{Results and examples}
Rote used a similar technique in \cite{r-mnmds-19,arxivRote} to count and bound the number of minimal dominating sets in trees.
He showed that the number of minimal dominating sets of a tree (or a forest) of order $n$ is bounded by $95^{\frac{n}{13}}$ and that it is sharp.
Our technique can be used to reprove this result.
We give some examples of application of the technique:
\paragraph{Independent dominating sets}
Independent dominating sets are exactly $(\{0\},\{\mathbb{N}^+\})$-dominating sets.
Applying the method previously described, after deletion of the useless coordinates we obtain vectors of dimension 3.
The 3 states correspond to:
\begin{enumerate}
\setcounter{enumi}{-1}
 \item $v$ is in $D$ and has no neighbor in $D$,
 \item $v$ is not in $D$ and has no neighbor in $D$,
 \item $v$ is not in $D$ and has at least one neighbor in $D$.
\end{enumerate}

The bilinear map $\widetilde{\Phi}_{(\{0\},\{\mathbb{N}^+\})}$ that corresponds to the composition of two 1-distinguished graphs is such that for any $u,v\in\mathbb{R}^3$:
$$\widetilde{\Phi}_{(\{0\},\{\mathbb{N}^+\})}(u,v)= 
\begin{pmatrix}
u_0(v_1+v_2)\\
u_1 v_2\\
u_1v_0+u_2(v_0+v_2)
\end{pmatrix}
$$
The bilinear map $\widetilde{\Delta}_{(\{0\},\{\mathbb{N}^+\})}$ that corresponds to the union of two 1-distinguished graphs is such that for any $u,v\in\mathbb{R}^3$:
$$\widetilde{\Delta}_{(\{0\},\{\mathbb{N}^+\})}(u,v)= 
\begin{pmatrix}
u_0(v_0+v_2)\\
u_1 (v_0+v_2)\\
u_2(v_0+v_2)
\end{pmatrix}
$$
The vector $\vect{v}_{(\{0\},\{\mathbb{N}^+\})}=\Psi_{(\{0\},\{\mathbb{N}^+\})}(G(\{x\},\emptyset), \{x\})$ is:
$\vect{v}_{(\{0\},\{\mathbb{N}^+\})}=
\begin{pmatrix}
1\\
1\\
0
\end{pmatrix}.$

Let $\alpha=\sqrt{2}$ and $X=\left\{
\begin{pmatrix}
0\\
0\\
\frac{\alpha}{2}
\end{pmatrix},
\begin{pmatrix}
\frac{1}{2}\\
0\\
\frac{1}{2}
\end{pmatrix},
\begin{pmatrix}
\frac{\alpha}{2}\\
\frac{\alpha}{2}\\
0
\end{pmatrix}
\right\}$.

It is clear that $\frac{\vect{v}_{(\{0\},\{\mathbb{N}^+\})}}{\alpha}\in\operatorname{conv}_\le(X)$ and it is easy to check that for all:
$u,v\in X$, $\widetilde{\Delta}_{(\{0\},\{\mathbb{N}^+\})}(u,v)\in \operatorname{conv}_\le(X)$ and $\widetilde{\Phi}_{(\{0\},\{\mathbb{N}^+\})}(u,v)\in \operatorname{conv}_\le(X)$.
By Corollary \ref{convexinfhullTree}, the number of Independent dominating sets in a forest of order $n$ is at most $\sqrt{2}^n$.
This bound is reached for graphs made of copies of the complete graph of order 2.

Note that Algorithm \ref{findconvsettree} does not find the set $X$, but find a sequence of set that converges toward $X$.

In fact, the bound is also sharp for trees because of the following example (see fig. \ref{indepdomtree}).
For any positive integer $n$, let $G_n(\{s,s_1,\ldots,s_{2n}\}, \{(s,s_{2i-1}): i\in[1,n]\}\cup\{(s_{2i-1}, s_{2i}): i\in[1,n]\})$.
Let $D$ be a set such that for all $i$, $|S\cap\{s_{2i},s_{2i+1}\}|=1$ and $s\in D$ iff for all $i$, $s_{2i-1}\in S$. 
Then $D$ is an independent dominating set of $G_n$. Thus there are at least $2^{n}$ independent dominating sets of $G_n$ which is of order $2n+1$.

\begin{figure}[H]
\centering
\includegraphics{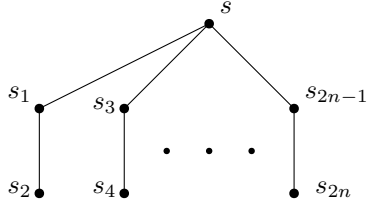}
\caption{Trees with $\Theta(2^{\frac{n}{2}})$ independent dominating sets \label{indepdomtree}}
\end{figure}
\begin{Proposition}
 The number of independent dominating sets in forest and trees is at most $2^{\frac{n}{2}}$ where $n$ is the order of the graph.
It is sharp in the sens that the number of independent dominating sets is at least $\frac{ 2^{\frac{n}{2}}}{2}$ for infinitely many trees.
\end{Proposition}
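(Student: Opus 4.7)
The plan is to prove the upper bound for forests first by directly invoking Corollary \ref{convexinfhullTree} with the data already exhibited in the preceding paragraphs, then to transfer the bound to trees, and finally to analyse the explicit family $G_n$ for sharpness.

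For the upper bound, I would take $\alpha=\sqrt{2}$ and the three-element set $X$ displayed above the statement, and verify the two hypotheses of Corollary \ref{convexinfhullTree}. The first, that $\vect{v}_{(\{0\},\mathbb{N}^+)}/\alpha=(1/\sqrt 2,1/\sqrt 2,0)^T$ lies in $\operatorname{conv}_\le(X)$, is immediate because that vector is already in $X$. The second, closure of $X$ under both $\widetilde{\Phi}_{(\{0\},\mathbb{N}^+)}$ and $\widetilde{\Delta}_{(\{0\},\mathbb{N}^+)}$ up to $\le$-slack, reduces to $18$ short calculations (nine ordered pairs for each bilinear map) using the explicit formulas of the two maps; each image must be exhibited as a convex combination of vectors of $X$, possibly dominating the actual image coordinate-wise. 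Corollary \ref{convexinfhullTree} then yields $\#_{(\{0\},\mathbb{N}^+)}(n)\le C\cdot (\sqrt 2)^n$ for forests. Since the number of independent dominating sets is multiplicative on disjoint unions, and the disjoint union of forests is a forest, Lemma \ref{feketesApplication} applies and removes the constant, giving the clean bound $2^{n/2}$. The bound for trees is then automatic, because every tree is a forest.

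For the matching lower bound I would use the family $G_n$ described in the text: a center $s$ joined to $n$ pendant paths of length two, $s - s_{2i-1} - s_{2i}$, giving $|V(G_n)|=2n+1$. For each $i$, independence forbids both of $s_{2i-1},s_{2i}$ from lying in $D$, while domination of the leaf $s_{2i}$ forces at least one of them in $D$; hence exactly one of each pair is chosen. This yields $2^n$ pair-choices, and each such choice extends uniquely to an independent dominating set of $G_n$: if at least one $s_{2i-1}$ is chosen then $s$ is dominated and we set $s\notin D$, while if every pair chooses $s_{2i}$ then $s$'s neighbours are all outside $D$ and we may (indeed must) put $s\in D$ independently. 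Hence $G_n$ admits at least $2^n\ge 2^{(2n+1)/2}/2$ independent dominating sets, proving the sharpness claim on infinitely many trees.

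The only real obstacle is the closure verification in the upper bound. As the author warns, Algorithm \ref{findconvsettree} does not terminate on this input but only produces a sequence of polytopes converging to the desired one, so $X$ cannot simply be read off from a run of the algorithm: it must be guessed and then checked by hand from the explicit formulas for $\widetilde\Phi$ and $\widetilde\Delta$. Once that finite check is done, everything else—application of the corollary, Fekete's lemma, and the lower-bound construction—is routine.
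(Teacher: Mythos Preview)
Your proposal is correct and follows essentially the same route as the paper: the same three-point set $X$ with $\alpha=\sqrt{2}$ is plugged into Corollary~\ref{convexinfhullTree}, and the same family $G_n$ furnishes the lower bound. The one cosmetic difference is that you invoke Lemma~\ref{feketesApplication} to kill the constant, whereas in fact for this particular $X$ one has $\max_{x\in X}\vect{p}\cdot x=1$ already (with $\vect{p}=(1,0,1)^T$), so the corollary gives $2^{n/2}$ directly; either way the argument goes through.
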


Composition of a $1$-distinguished graph $g$ with $p_2=(G(\{1,2\},\{(1,2)\}),1)$ gives:
$$\widetilde{\Phi}_{(\{0\},\{\mathbb{N}^+\})}(\Psi(g), \Psi(p_2))= 
\begin{pmatrix}
1&0&0\\
0&1&0\\
0&1&2
\end{pmatrix}
\Psi(g).$$
Composition with $p_2$ correspond to adding 2 vertices  and the Perron-Frobenius eigenvalue of this matrix is $2$ while the other two eigenvalues are both $1$.
It gives a less combinatorial proof that there are trees with $\Theta(2^{\frac{n}{2}})$ independent dominating sets.
It is not a coincidence since by starting with a single vertex and composing iteratively with $p_2$ one gets the family of graphs $G_n$.

\paragraph{Induced matching}
We know from Proposition \ref{pwinducedmatching} that the number of induced matchings of the path of length $n$ is a $\theta(\alpha^n)$ where $\alpha$ is the real
root of $x^3-x^2-1$ between 1 and 2.
Using the fact that induced matchings are $(\{1\},\mathbb{N})$ dominating sets we can apply our technique and obtain:
\begin{Proposition}
 Let $\alpha$ be the real root of $x^3-x^2-1$ between 1 and 2, $\alpha\approx1.46557$. 
 Then the number of induced matchings in a forest of order $n$ is bounded by $\alpha^n$.
 Moreover, this value of $\alpha$ is sharp even for paths.
\end{Proposition}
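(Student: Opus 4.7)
The plan is to specialize the forest-counting machinery of Lemma~\ref{expressiontocountallforests} together with Corollary~\ref{convexinfhullTree} to the parameters $\sigma=\{1\}$, $\rho=\mathbb{N}$ corresponding to induced matchings, and then to invoke Proposition~\ref{pwinducedmatching} for sharpness.

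First I would set up the state space. With these parameters Lemma~\ref{recog} gives $p_\sigma=2$, $q_\sigma=1$, $p_\rho=0$, $q_\rho=1$, so a priori there are four states for the distinguished vertex of a $1$-distinguished graph: $v$ is in $D$ with $0$, $1$, or ${\ge}2$ neighbors in $D$, or $v$ is not in $D$. The state ``in $D$ with at least two neighbors in $D$'' is not co-accessible, since such a vertex is already committed to $D$ and can never be part of an induced matching. After removing it we work in $\mathbb{R}^3$ with initial vector $\widetilde{\vect{v}}=(1,0,1)^T$ for the single-vertex $1$-distinguished graph.

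Next I would compute, by a short case analysis, the two bilinear maps $\widetilde{\Phi}$ (composition) and $\widetilde{\Delta}$ (union) on these three states. Each case reduces to deciding whether the newly non-distinguished vertex $a_2$ is properly $(\sigma,\rho)$-dominated after the operation, and reading off the new state of $a_1$. For instance $\widetilde{\Delta}(\vect{u},\vect{v})_i=\vect{u}_i(\vect{v}_1+\vect{v}_2)$, because in a union $a_1$ keeps its state while $a_2$ must already be in one of the two ``properly dominated'' states in $g_2$.

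Then I would run Algorithm~\ref{findconvsettree} with $\alpha$ the real root of $x^3-x^2-1$ in $(1,2)$. Because the pathwidth-$1$ version of the same problem (Proposition~\ref{pwinducedmatching}) already admits a small explicit certifying set $X\subseteq\mathbb{R}^3$ for this $\alpha$, one can reasonably expect the forest algorithm, working in the same dimension at the same target rate, to terminate. Once such an $X$ is found, Corollary~\ref{convexinfhullTree} gives a constant $C$ with $\#_{\sigma,\rho}(n)\le C\alpha^n$ for every forest of order $n$, and Lemma~\ref{feketesApplication} (whose proof only needs super-multiplicativity of $\#_{\sigma,\rho}$ under disjoint union, which clearly holds for forests) removes $C$. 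Sharpness is then immediate since a path is a forest and Proposition~\ref{pwinducedmatching} already shows that the number of induced matchings of the path of order $n$ is $\Theta(\alpha^n)$.

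The main obstacle is the absence of any a priori termination guarantee for Algorithm~\ref{findconvsettree}: closure under the two bilinear maps (rather than a finite family of linear maps) is genuinely more delicate than in the pathwidth setting. If seeding the algorithm with $\widetilde{\vect{v}}/\alpha$ alone does not suffice, the fallbacks are either to seed it with additional candidate vectors (as was done for minimal dominating sets in pathwidth~$2$) or to hand-construct $X$ by lifting the pathwidth-$1$ certificate from Proposition~\ref{pwinducedmatching} and verifying the two closure conditions under $\widetilde{\Phi}$ and $\widetilde{\Delta}$ by linear programming.
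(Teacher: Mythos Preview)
Your proposal is correct and follows essentially the same approach as the paper: the paper's own proof of this proposition is nothing more than the sentence ``using the fact that induced matchings are $(\{1\},\mathbb{N})$-dominating sets we can apply our technique,'' together with the reference to Proposition~\ref{pwinducedmatching} for sharpness on paths. Your expansion of ``apply our technique'' (reducing to three states, writing down $\widetilde{\Phi}$ and $\widetilde{\Delta}$, running Algorithm~\ref{findconvsettree}, invoking Corollary~\ref{convexinfhullTree} and then Lemma~\ref{feketesApplication} to drop the constant for forests) is exactly what the paper does in the neighboring worked examples, and your caveat about termination of Algorithm~\ref{findconvsettree} is precisely the standing caveat the paper makes throughout.
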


\paragraph{Total perfect dominating set}
Total perfect dominating sets are exactly $(\{1\},\{1\})$-dominating sets.
We can compute the two bilinear maps corresponding to union and composition of $1$-distinguished graph and they are given by:
$$ \widetilde{\Phi}_{(\{1\},\{1\})}(u,v)= 
\begin{pmatrix}
u_0 v_2\\
u_0 v_0+u_1 v_2\\
u_2(v_3)\\
u_2 v_1+u_3v_3
\end{pmatrix},
\widetilde{\Delta}_{(\{1\},\{1\})}(u,v)= 
\begin{pmatrix}
u_0(v_1+v_3)\\
u_1(v_1+v_3)\\
u_2(v_1+v_3)\\
u_3(v_1+v_3)\\
\end{pmatrix}
$$

Using Algorithm \ref{findconvsettree} with $\alpha = 4^\frac{1}{5}$, we can  compute a set $X$ of points that respects the conditions of Corollary \ref{convexinfhullTree}.
It gives the following result:
\begin{Proposition}
The number of total perfect dominating sets of a forest of order $n$ is upper-bounded by $4^\frac{n}{5}$.
This number is reached by  disjoint union of stars on 5 vertices.
\end{Proposition}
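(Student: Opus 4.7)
The plan is to apply Corollary \ref{convexinfhullTree} with $\alpha = 4^{1/5}$ to the bilinear maps $\widetilde{\Phi}_{(\{1\},\{1\})}$ and $\widetilde{\Delta}_{(\{1\},\{1\})}$ already listed. First I would justify that, after discarding non-co-accessible states, only four states remain: (0) $v\in D$ with $0$ neighbors in $D$, (1) $v\in D$ with one neighbor in $D$, (2) $v\notin D$ with $0$ neighbors in $D$, (3) $v\notin D$ with one neighbor in $D$. Indeed, since $\sigma=\rho=\{1\}$, any vertex having two or more neighbors in $D$ can never be ``repaired'' to be properly dominated, so Lemma \ref{recog} gives $p=2$, $q=1$, and two of the six raw states drop out as non-co-accessible. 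With these four states, a direct case analysis on composition (edge between the two distinguished vertices, with $a_2$ leaving the distinguished set and therefore having to be properly dominated) and on disjoint union (no new edge, $a_2$ must already be in state $1$ or $3$) confirms the given formulas for $\widetilde{\Phi}$ and $\widetilde{\Delta}$, and the single-vertex graph has $\widetilde{\vect{v}}=(1,0,1,0)^T$ and $\widetilde{\vect{p}}=(0,1,0,1)^T$.

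Next I would feed these data into Algorithm \ref{findconvsettree} with $\alpha=4^{1/5}$ and exhibit the resulting finite set $X\subseteq \mathbb{R}^4$. Given $X$, checking that $\widetilde{\vect{v}}/\alpha\in\operatorname{conv}_{\le}(X)$ and that $\widetilde{\Phi}(x,y),\widetilde{\Delta}(x,y)\in \operatorname{conv}_\le(X)$ for all $x,y\in X$ reduces to a finite collection of linear programs, done in the accompanying C++ implementation. Corollary \ref{convexinfhullTree} then yields $\max\{|\widetilde{\vect{p}}\cdot u|:(u,n)\in \widetilde{F}\}\le C\cdot\alpha^n$, and since the disjoint union of two forests multiplies the number of $(\sigma,\rho)$-dominating sets, Lemma \ref{feketesApplication} removes the constant, giving the announced bound $\#_{(\{1\},\{1\})}(n)\le 4^{n/5}$.

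For sharpness, I would analyze $K_{1,4}$ directly: a total perfect dominating set must contain the center (each leaf's unique neighbor must lie in $D$) and exactly one leaf (so that $|N(\text{center})\cap D|=1$), giving exactly $4$ such sets. Hence the disjoint union of $k$ copies of $K_{1,4}$ has order $n=5k$ and exactly $4^k=4^{n/5}$ total perfect dominating sets, matching the upper bound. This can also be seen mechanically: applying $\widetilde{\Phi}$ four times to the single-vertex vector gives $\Psi(K_{1,4})=(1,4,0,0)^T$ with center distinguished, and iterated application of $\widetilde{\Delta}$ multiplies this by $(\Psi(K_{1,4}))_1+(\Psi(K_{1,4}))_3=4$ each time.

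The main obstacle is the first step that the algorithm terminates, i.e., producing an explicit finite $X$ whose convex hull is closed under $x\mapsto \alpha^{-?}\widetilde{\Phi}(x,\cdot)$ and $\widetilde{\Delta}(x,\cdot)$; unlike the pathwidth case (Theorem \ref{convexhull}) there is no scaling freedom, so one has to hit the correct value of $\alpha$ on the first try. Here the value $4^{1/5}$ is dictated by the extremal example $K_{1,4}$, which is what makes the computation succeed; with any smaller $\alpha$ the algorithm would diverge, and a larger $\alpha$ would not yield a sharp bound. The remaining verifications are then routine (though tedious) polytope membership checks, which is why I would delegate them to the C++ implementation rather than write them by hand.
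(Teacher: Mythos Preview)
Your proposal is correct and follows essentially the same route as the paper: run Algorithm~\ref{findconvsettree} with $\alpha=4^{1/5}$ on the two bilinear maps $\widetilde{\Phi}_{(\{1\},\{1\})}$ and $\widetilde{\Delta}_{(\{1\},\{1\})}$, invoke Corollary~\ref{convexinfhullTree}, remove the constant via Lemma~\ref{feketesApplication}, and certify sharpness with the star $K_{1,4}$. You simply spell out more of the details (the reduction to four states, the explicit count on $K_{1,4}$, the mechanical check via $\Psi(K_{1,4})=(1,4,0,0)^T$) that the paper leaves implicit. One minor wording slip: in your last paragraph you write ``closed under $x\mapsto \alpha^{-?}\widetilde{\Phi}(x,\cdot)$'', but as you state correctly earlier, in Corollary~\ref{convexinfhullTree} there is no $\alpha$-scaling on $\Phi$; the $\alpha$ enters only through $\widetilde{\vect v}/\alpha$.
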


This bound is also true, but not sharp for trees:
\begin{Proposition}\label{treetpd}
Let $\alpha = (2^{27}\times 7)^\frac{1}{85}\approx1.275157$.
There exists a positive constant $C$ such that the number of total perfect dominating sets of a tree of order $n$ is upper-bounded by $C\alpha^n$.
This value of $\alpha$ is sharp.
\end{Proposition}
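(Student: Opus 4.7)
The plan is to follow the same template as the preceding proposition on forests, but to apply Corollary~\ref{convexinfhullTree} using only the composition bilinear map $\widetilde{\Phi}_{(\{1\},\{1\})}$ (trees, unlike forests, are built from compositions alone). I would run Algorithm~\ref{findconvsettree} with $\widetilde{A}=\{\widetilde{\Phi}_{(\{1\},\{1\})}\}$, $\widetilde{\vect{v}}=\vect{v}_{(\{1\},\{1\})}=(1,0,1,0)^T$ (the state vector of the one-vertex $1$-tree), and $\alpha=(2^{27}\cdot 7)^{1/85}$. Provided it terminates with a finite set $X\subseteq\mathbb{R}^4$, Corollary~\ref{convexinfhullTree} immediately delivers $\#_{(\{1\},\{1\})}(n)\le C\alpha^n$ for $C=\max_{x\in X}|\vect{p}_{(\{1\},\{1\})}\cdot x|$. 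Note that Lemma~\ref{feketesApplication} is not available in the tree case (trees are not closed under disjoint union, as the author points out), so the multiplicative constant $C>1$ is genuinely needed and cannot be absorbed into $\alpha$ as it was for forests.

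For the sharpness part, the arithmetic form of $\alpha$ points directly to an extremal gadget: a tree $T_0$ on $85$ vertices such that iteratively composing copies of $T_0$ produces a tree of order $\Theta(85k)$ with $\Theta\bigl((2^{27}\cdot 7)^k\bigr)$ total perfect dominating sets. Equivalently, fixing $\vect{w}=\Psi(T_0)$ and considering the linear endomorphism $M_{\vect{w}}\colon u\mapsto\widetilde{\Phi}_{(\{1\},\{1\})}(u,\vect{w})$, its Perron-Frobenius eigenvalue should be exactly $2^{27}\cdot 7$. To locate such a $T_0$, I would inspect the extremal vertices of the set $X$ returned by the algorithm and trace back the sequences of compositions that produced them, retaining the candidate whose per-vertex geometric growth rate matches $\alpha$; this is the procedure the author describes at the end of Section~\ref{examplepw1}.

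The main obstacle is purely computational. Termination of Algorithm~\ref{findconvsettree} is not guaranteed, and even when it does terminate the set $X$ may contain many vectors with algebraic-number entries, each iteration requiring numerous linear programs over $\mathbb{Q}(\alpha)$. Identifying the gadget $T_0$ from the algorithm's trace is similarly delicate and, as the author acknowledges elsewhere in the paper, only partly automatable. Once both $X$ and $T_0$ are in hand, however, the two verifications---checking the convex-hull inclusions of Corollary~\ref{convexinfhullTree} for the upper bound, and computing the Perron-Frobenius eigenvalue of $M_{\vect{w}}$ for the lower bound---are entirely mechanical, and together yield the stated value of $\alpha$ together with its sharpness.
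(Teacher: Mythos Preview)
Your proposal is correct and follows the paper's template closely for the upper bound: apply Algorithm~\ref{findconvsettree} with $\widetilde{A}=\{\widetilde{\Phi}_{(\{1\},\{1\})}\}$ and the stated $\alpha$, then invoke Corollary~\ref{convexinfhullTree}. The paper does exactly this and records the resulting set $X$ (of $21$ vectors) in an appendix, precisely because the algebraic computations over $\mathbb{Q}(\alpha)$ with $\alpha$ of degree $85$ are slow.

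For sharpness the two arguments diverge slightly. You propose to discover a gadget $T_0$ on $85$ vertices by tracing extremal points of $X$ and then to certify it via the Perron--Frobenius eigenvalue of the linear map $u\mapsto\widetilde{\Phi}(u,\vect{w})$. The paper instead exhibits the extremal family explicitly and counts combinatorially: a tree $G$ consisting of a root $s$ joined to $l$ vertices $s_1,\dots,s_l$, each $s_i$ joined to $k_i$, and each $k_i$ carrying four pendant leaves. A short case analysis forces every $k_i\in D$, $s\notin D$, exactly one $s_i\in D$, and (for the remaining $l-1$ subtrees) exactly one leaf in $D$, yielding $4^{\,l-1}l$ total perfect dominating sets. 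Chaining $k$ copies of $G$ by edges between successive roots keeps the counts multiplicative (since no root can lie in $D$), giving $(4^{\,l-1}l)^k$ sets on $k(6l+1)$ vertices; the choice $l=14$ produces $(2^{27}\cdot7)^k$ on $85k$ vertices. Your eigenvalue route would reach the same conclusion, but the paper's explicit construction is self-contained, human-checkable, and explains \emph{why} the magic numbers $2^{27}$ and $7$ arise.
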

\begin{proof}
The proof that this bound is correct can be done by finding a set $X$ that respects the conditions of Corollary \ref{convexinfhullTree}.
it can be done using Algorithm \ref{findconvsettree}.
However, since the computations are terribly long (because of the algebraic number of high degree) we give the set $X$ in Annex \ref{Xperfecttotaldomtree}.

Let $G$ be the graph depicted in figure \ref{treetpds}.
% Remark that there are $l$ $s_i$ and $l_i$ and this is the meaning of the three dots between the second and the last branch.
\begin{figure}[H]
\centering
\includegraphics[scale=0.85]{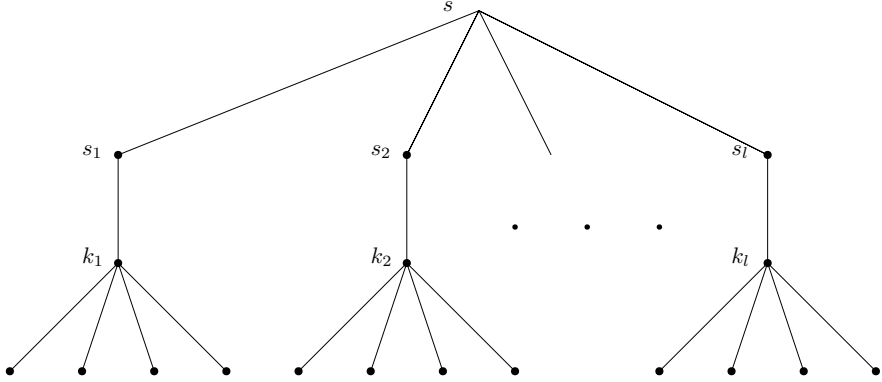}
\caption{The graph $G$ used in the construction of trees with $\alpha^n$ total perfect dominating sets \label{treetpds}}
\end{figure}

Let $D$ be a perfect total dominating set of this graph. 
Each leaf of the tree needs a neighbor in $D$, so all the $k_i$ are in $D$.
This implies that $s$ is not in $D$, otherwise the $s_i$ would have two neighbors in $D$.
Moreover exactly one of the $s_i$ must be in $D$, and then $k_i$ is the only other vertex in $D$ in the subtree rooted in $k_i$.
For the other subtrees, exactly one of the leaf of each of them is in $D$.
Thus there are $(4^{l-1}l)$  perfect total dominating sets.

Now let $G'$ be a chain of $k$ copies of $G$ where two consecutive copies share an edge between their roots.
Then since the roots cannot be in the perfect total dominating sets the number of perfect total dominating sets is exactly $(4^{l-1}l)^k$.
For $l = 14$ it gives $(2^{27}\times 7)^k$ perfect total dominating sets.
In this case the graph $G'$ has $n=k (6\times14+1)=85k$ vertices, and $\alpha^n$ total perfect dominating sets.
\end{proof}

\paragraph{Perfect code}
Using that the perfect codes in graphs are exactly $(\{0\},\{1\})$-dominating sets, we deduce the following:
\begin{Proposition}
Any forest of order $n$ admits at most $2^\frac{n}{2}$ perfect codes. 
Moreover, this bound is sharp for unions of paths of order 2.
\end{Proposition}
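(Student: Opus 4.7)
The plan is to mirror the treatment of independent dominating sets given earlier in this subsection, with the only substantive change coming from replacing $\rho=\mathbb{N}^+$ by $\rho=\{1\}$. Since perfect codes are precisely the $(\{0\},\{1\})$-dominating sets, I can invoke Lemma \ref{expressiontocountallforests} and reduce the problem to iterating two bilinear maps, $\Phi$ (composition) and $\Delta$ (union), on a fixed initial vector and reading the count off via a fixed covector.

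First I would enumerate the relevant states of the distinguished vertex. As in the independent dominating set case, after pruning coordinates that are not both accessible and co-accessible only three states survive: (A) in $D$ with no $D$-neighbor so far, (B) not in $D$ with no $D$-neighbor so far, and (C) not in $D$ with exactly one $D$-neighbor so far. The states ``in $D$ with a $D$-neighbor'' and ``not in $D$ with at least two $D$-neighbors'' are not co-accessible, since $\sigma=\{0\}$ and $\rho=\{1\}$ can never again be satisfied once either is reached. A short case analysis of composition and union then yields explicit formulas for $\Phi$ and $\Delta$; the main change from the $(\{0\},\mathbb{N}^+)$ case is that certain transitions previously kept alive by the absorptive nature of $\mathbb{N}^+$ (for instance, adding an edge to a $C$-state root on either side) are now forbidden, since $\rho=\{1\}$ is not closed under adding neighbors.

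Next I apply Corollary \ref{convexinfhullTree} with $\alpha=\sqrt{2}$. The key step is exhibiting a finite set $X\subseteq\mathbb{R}_+^3$ satisfying the two bulleted conditions. I would try the same three-point set that works for independent dominating sets, namely $\{(0,0,\alpha/2)^T,\ (1/2,0,1/2)^T,\ (\alpha/2,\alpha/2,0)^T\}$ with $\alpha=\sqrt{2}$. One then performs a finite verification on the nine pairs for $\Phi$ and the nine pairs for $\Delta$, checking in each case that the image is pointwise dominated by a convex combination of elements of $X$; although the bilinear maps for perfect codes are not coordinate-wise dominated by those for independent dominating sets, the terms that differ happen to vanish on these particular vectors (the relevant coordinates of the three points being either $0$ or easily absorbed). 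Corollary \ref{convexinfhullTree} then yields a bound of the form $C\cdot 2^{n/2}$. Since forests are closed under disjoint union and the number of perfect codes is multiplicative on connected components, Lemma \ref{feketesApplication} applies verbatim and lets me replace $C$ by $1$.

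Sharpness is immediate: the path $P_2$ admits exactly two perfect codes (either endpoint alone and nothing else), so the disjoint union of $n/2$ copies of $P_2$ is a forest of order $n$ achieving $2^{n/2}$ perfect codes. The only non-routine step is the case analysis producing $\Phi$ and $\Delta$ together with the finite convex-hull verification of $X$; everything else is a direct application of the framework of the section, so I do not expect a serious obstacle.
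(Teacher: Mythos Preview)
Your approach is correct and is essentially the paper's approach; the paper itself gives no explicit proof beyond ``apply the method,'' and you have correctly filled in the details (three surviving states, same initial vector and covector as for independent dominating sets, same $X$ with $\alpha=\sqrt2$, then Lemma~\ref{feketesApplication} to kill the constant).

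One small correction: your parenthetical remark that ``the bilinear maps for perfect codes are not coordinate-wise dominated by those for independent dominating sets'' is false, and in fact the opposite is true. With the three states ordered as $(\text{in }D,\ \text{not in }D\text{ undominated},\ \text{not in }D\text{ dominated})$, a short case analysis gives
\[
\Phi_{(\{0\},\{1\})}(u,v)=\begin{pmatrix}u_0v_1\\ u_1v_2\\ u_1v_0+u_2v_2\end{pmatrix},
\qquad
\Delta_{(\{0\},\{1\})}(u,v)=\begin{pmatrix}u_0(v_0+v_2)\\ u_1(v_0+v_2)\\ u_2(v_0+v_2)\end{pmatrix},
\]
which are coordinate-wise \emph{below} the corresponding maps $\widetilde\Phi_{(\{0\},\mathbb N^+)}$ and $\widetilde\Delta_{(\{0\},\mathbb N^+)}$ displayed earlier (the union map is in fact identical). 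Since $\operatorname{conv}_\le(X)$ is downward closed and the independent-dominating-set set $X$ already satisfies the hypotheses of Corollary~\ref{convexinfhullTree}, the verification for perfect codes is immediate without any separate case-by-case check. This does not create a gap in your argument; it only means the ``finite verification on the nine pairs'' you anticipated is unnecessary.
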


Once again, we can get a smaller bound for trees.
\begin{Proposition}
Let $\alpha = 3^{\frac{1}{7}}\approx1.16993$.
There exists a positive constant $C$ such that the number of perfect codes of a tree of order $n$ is upper-bounded by $C\alpha^n$.
This value of $\alpha$ is tight.
\end{Proposition}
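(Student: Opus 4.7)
The plan is to mirror the argument of Proposition~\ref{treetpd}: set up the bilinear machinery for trees given by Lemma~\ref{expressiontocountalltrees} with $\sigma=\{0\}$ and $\rho=\{1\}$, apply Corollary~\ref{convexinfhullTree} to obtain the upper bound $C\alpha^n$, and exhibit an explicit family of trees that reaches the rate $\alpha=3^{1/7}$.

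First I would describe the (reduced) state space. With $\sigma=\{0\}$ and $\rho=\{1\}$ one has $p_\sigma=q_\sigma=1$, $p_\rho=2$, $q_\rho=1$, giving at most five raw states. Three of these states (``in $D$ with a neighbour in $D$'' and ``not in $D$ with at least two neighbours in $D$'') are terminal dead states for a distinguished vertex of a tree, since a root can only gain new neighbours through later compositions and such a vertex can never become properly dominated. After removing non-accessible/non-co-accessible coordinates one is left with a small state space, and I would write the resulting bilinear map $\widetilde\Phi_{(\{0\},\{1\})}$ together with $\widetilde{\vect v}_{(\{0\},\{1\})}$ and $\widetilde{\vect p}_{(\{0\},\{1\})}$ explicitly (a short case analysis, essentially identical to the one carried out for the preceding propositions).

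Next I would apply Corollary~\ref{convexinfhullTree} with $\alpha=3^{1/7}$. The set $X$ would be produced by Algorithm~\ref{findconvsettree}; as in Proposition~\ref{treetpd}, the resulting polytope lives over an algebraic number of non-trivial degree and is likely to contain many vertices, so the cleanest presentation is to list the set $X$ in an annex and simply check (i) $\widetilde{\vect v}_{(\{0\},\{1\})}/\alpha\in\operatorname{conv}_\le(X)$ and (ii) $\widetilde\Phi_{(\{0\},\{1\})}(\vect x,\vect y)\in\operatorname{conv}_\le(X)$ for every pair $\vect x,\vect y\in X$. Both checks are linear programs. This yields the desired upper bound $C\alpha^n$ for some constant $C=\max_{\vect x\in X}|\widetilde{\vect p}_{(\{0\},\{1\})}\cdot\vect x|$; note that unlike in the forest case Lemma~\ref{feketesApplication} is not available here, so keeping the constant $C$ is essential.

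For tightness I would use the following gadget on $7$ vertices: a spider with centre $s$ and three legs $s - a_i - b_i$ for $i\in\{1,2,3\}$. A direct case analysis shows that any perfect code $D$ of this spider has $s\not\in D$, exactly one $a_i\in D$, and all $b_j\in D$ for $j\neq i$ together with $b_i\not\in D$, giving exactly $3$ perfect codes, in each of which the root $s$ is not in $D$. Chaining $k$ copies into a caterpillar by identifying (or joining by an edge) the roots $s_1,\dots,s_k$ yields a tree on $7k$ vertices; because every $s_i$ has a non-root neighbour of the spider that must be dominated, each $s_i$ is forced to remain outside $D$ and the choices at each spider are independent, yielding exactly $3^k=\alpha^{7k}$ perfect codes. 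This shows $\alpha=3^{1/7}$ is sharp.

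The main obstacle is step~2: Algorithm~\ref{findconvsettree} is not guaranteed to terminate, and as in Proposition~\ref{treetpd} it may be necessary to seed the algorithm with a carefully chosen initial set $X_0$ (e.g.\ scaled standard basis vectors, possibly restricted to a well-chosen subset) in order for the polytope to stabilise at $\alpha=3^{1/7}$. Running with $\alpha$ strictly below $3^{1/7}$ will not terminate, which is expected because the spider chain already certifies the lower bound; the work is all in finding the right finite invariant polytope witnessing the matching upper bound.
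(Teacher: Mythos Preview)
Your proposal is correct and follows essentially the same route as the paper: compute the tree bilinear map for $(\sigma,\rho)=(\{0\},\{1\})$, invoke Corollary~\ref{convexinfhullTree} with $\alpha=3^{1/7}$ and a set $X$ produced by Algorithm~\ref{findconvsettree}, then certify tightness with a $7$-vertex gadget whose three perfect codes all avoid the root and chain copies through these roots.

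Two small remarks. First, your state count is off by one: with $p_\sigma=q_\sigma=1$, $p_\rho=2$, $q_\rho=1$ there are five raw states and exactly \emph{two} of them are dead (``in $D$ with a neighbour in $D$'' and ``not in $D$ with $\ge 2$ neighbours in $D$''), leaving three live states---you wrote ``three'' dead states but listed two. Second, the paper reports that Algorithm~\ref{findconvsettree} terminates here without any seeding, so the caveat you raise (borrowed from the total-perfect-dominating-set case) is unnecessary for perfect codes; the polytope stabilises on its own at $\alpha=3^{1/7}$. Your spider with three legs of length~$2$ is a valid tightness gadget and matches the paper's construction (the paper connects the copies by a path through the roots rather than single edges, but the count is the same: $3^{k}$ perfect codes on $7k$ vertices).
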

\begin{proof}
The proof that this bound is correct can be done by finding a set $X$ that respects the conditions of Corollary \ref{convexinfhullTree}.
it can be done using Algorithm \ref{findconvsettree}.

Let $G$ be the graph depicted in figure \ref{treepc}.
\begin{figure}[H]
\centering
\includegraphics{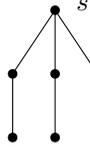}
\caption{The graph $G$ used in the construction of trees with  $7^{\frac{n}{3}}$ perfect codes \label{treepc}}
\end{figure}
The number of perfect codes of $G$ is $3$ and none of the perfect codes contain $s$.
The tree obtained by connecting multiple copies of $G$ with a path going through the copies of $s$ has at least (and in fact exactly) $7^{\frac{n}{3}}$ perfect codes.
\end{proof}

\section{Conclusion}
We explained here a technique to bound the number of (1-minimal, 1-maximal) $(\sigma,\rho)$-dominating sets of graphs of bounded pathwidth, trees or forests when $\sigma$ and $\rho$ are recognizable.
The technique could be adapted to find lower-bounds (in the case of pathwidth, one can compute the joint spectral subradius of the same set of matrices),
but it doesn't seem to be interesting in general.
The main technique presented here probably cannot work all the time to find the minimal upper-bound. 

However, in the case of bounded pathwidth, the computation of these bounds is equivalent to the computation of the joint spectral radius of some particular finite set of computable matrices.
It is known that the joint spectral radius can be approximated, which implies that we can approximate the optimal bound.
We do not have any similar result for trees and forests (and as far as the author knows there is no notion that ``generalizes'' the joint spectral radius to bilinear maps).
Thus the first question to solve would be:

\begin{Problem}
 Given a set of bilinear maps $A\subseteq(\mathbb{N}^{n})^{\mathbb{N}^{n\times n}}$, a vector $\vect{v}$, a vector norm $||.||$ and $F$ such that:
 \begin{itemize}
  \item $(\vect{v},1)\in F$,
  \item for all $(\vect{u},i),(\vect{w},j)\in F$ and $\Phi\in A$, $(\Phi(\vect{u},\vect{w}),i+j)\in F$.
 \end{itemize}
  Is this possible to approximate $\lim_{m\rightarrow\infty} \left(\max\left\{||\vect{u}||:(\vect{u},m)\in F \right\}\right)^{1/m}$ ?
\end{Problem}

Then there are two directions to try to generalize the technique.
Intuitively, we only need a dynamic programming algorithm that can be described with finitely many states and finitely many (multi)linear maps.
We can ask what are other families of graphs where we can apply a similar technique.
For instance, it is clear that everything works well for bounded tree-width and it seems that it should work for bounded clique-width. 
In fact, for clique-width we would also compute a joint spectral radius. However the dimension of the vectors and 
matrices might be to big to be able to do anything.

The last direction is to compute bounds for other kinds of sets.

\begin{Problem}
Can we apply a similar technique to bounds $k$-minimal (resp. minimal, $k$-maximal, maximal) $(\sigma,\rho)$-dominating sets when $\sigma$ and $\rho$ are recognizable?

What if the conditions on $\sigma$ and $\rho$ is weaker?
\end{Problem}
It seems to be doable for $k$-minimal and $k$-maximal, but is not obvious for minimal and maximal in general.
However, it might be the case that for any recognizable sets $\sigma$ and $\rho$, there exists a computable $k$ such that
$k$-minimal  $(\sigma,\rho)$-dominating sets are exactly minimal  $(\sigma,\rho)$-dominating sets.

There is no reason to be restricted to $(\sigma,\rho)$-dominating sets.
It is easy to generalize our method to more than 2 sets with constraints on the size of the intersections of neighborhoods with other sets as long as the allowed degrees are still recognizable sets.
For instance, using 5 different classes one could bound the number of induced collections of $C_4$.
However, a better approach would be to show that this is doable for any property which is expressible in some given logic.

\section*{Acknowledgement}
The author would like to thanks Michaël Rao and G{\"u}nter Rote for discussions or comments on this result.
This work is supported by the French National Research Agency, ANR project GraphEn (ANR-15-CE40-0009).

\bibliographystyle{plain}
\bibliography{biblio}

\appendix

\section{Annex: C++ code}\label{C++desc}
We give a short description of the C++ code used to generate the sets of operators and find the sets $X$ by applying Algorithm \ref{algopw} or Algorithm \ref{findconvsettree}.
First, remark that this code requires the library gmp (for efficient computation on rational numbers). 
This library is easy to install on any Linux (and is often natively installed) and there are many easy solutions for windows (Cygwin and MinGW for instance).

This code is written in different files.
First there is the file \emph{main.cpp}, that reads the options, and calls the different functions with the right values for $\sigma$ and $\rho$ and the class of graphs.

Then two files are dedicated to the computation of the set of operators:
\begin{description}
 \item[automaton.hpp:] This file contains the description of the automatons for the state of a vertex (there are 3 different automatons for enumerating all/minimal/maximal $(\sigma,\rho)$-dominating sets). 
 The automaton is an object that depends on $\sigma$ and $\rho$ (given on a format that corresponds to the automatons recognizing these sets).
 These objects contain the methods that given the state of a vertex decides the new state depending on how its neighborhood changes.
 There are also two special automatons: one for maximal induced matchings and the other for minimal dominating sets in pathwidth $2$.
 \item[get\_operators.hpp:] This file describes three classes that use the automatons from automaton.hpp to compute the operators for trees, forests, pathwidth 1 and pathwidth 2.
 It also contains the functions that call the uses the right class and return the operators and the initial vector.
 There is also a special class for the computation of minimal dominating sets in pathwidth $2$.
\end{description}

Two other files are dedicated to the computation of the set $X$:
\begin{description}
 \item[algebraic.hpp:] This file implements the class \emph{Number} that allow us to do exact computation on algebraic numbers. 
 This is standard and relies on the bijection between the smallest subfield of $\mathbb{R}$ that contains $\mathbb{Q}$ and $\alpha$ and $\mathbb{Q}[X]/P(X)$ where 
 $\alpha$ is an algebraic number of minimal polynomial $P(X)$. Interval of rationals (that can be computed with arbitrary precision) are used to solve inequalities. 
 The only non-trivial operation is the division, but this can be done by using the extended Euclidean algorithm to compute Bézout coefficients and obtain the inverse of a polynomial in $\mathbb{Q}[X]/P(X)$.
 \item[X\_from\_operators.hpp:] This file implements Algorithm \ref{algopw} and Algorithm \ref{findconvsettree}. We use a simple implementation of the simplex algorithm to find the set $\operatorname{conv_<}(S)$.
\end{description}

When using the program one should specify in the arguments which $(\sigma,\rho)$-dominating sets are to be counted in which graph class.
Then before doing anything the program invites the user to give the growth rate to test (an algebraic number) and a possibly empty set of vectors to add to the set $X$. 

For more details on the implementation, one should look at the code. 
For more details on how to use it, one should call the program with the argument -h.

Remark that most of the computation time is spent inside the function that computes $\operatorname{Hull}_<$. 
A better implementation of the simplex or another algorithm computing the $\operatorname{Hull}_<$ could greatly reduce the execution time.
However, we keep the code as simple as possible on this side to improve checkability.

\section[Annex: X of Proposition \ref{treetpd}]{Annex: $X$ of Proposition \ref{treetpd}}\label{Xperfecttotaldomtree}

We give the set $X$ mentioned in the proof of Proposition \ref{treetpd}:

\begin{align*} 
X=&\left\{
\begin{pmatrix} 
0\\   0 \\  2/7\\   1   
\end{pmatrix},\right.
\begin{pmatrix}
0\\   0  \\ \alpha^{6}/14  \\ 13\alpha^{6 }/56 
\end{pmatrix},
\begin{pmatrix} 
0  \\ 0   \\ \alpha^{12 } /56  \\ 3\alpha^{12 } /56 
\end{pmatrix},
\begin{pmatrix}
0 \\  0   \\ \alpha^{18 } /224  \\ 11\alpha^{18 } /896
\end{pmatrix},
\begin{pmatrix} 
0 \\  0   \\ \alpha^{24 }/896   \\ 5\alpha^{24 }/1792
\end{pmatrix},\\
&\begin{pmatrix}  
0 \\  0   \\ \alpha^{30 }/3584   \\ 9\alpha^{30 }/14336
\end{pmatrix},
\begin{pmatrix}  
0 \\  0   \\ \alpha^{36 }  /14336  \\ \alpha^{36 }  /7168
\end{pmatrix},
\begin{pmatrix}
0 \\  0   \\ \alpha^{42 } /57344   \\ \alpha^{42 } /32768
\end{pmatrix},
\begin{pmatrix}
0 \\  0   \\ \alpha^{48 }  /229376   \\ 3\alpha^{48 }  /458752
\end{pmatrix},\\
&
\begin{pmatrix}
0 \\  0   \\ \alpha^{54} /917504  \\ 5\alpha^{54} /3670016
\end{pmatrix},
\begin{pmatrix}
0 \\  0   \\ \alpha^{60 }  /3670016   \\ \alpha^{60 }  /3670016
\end{pmatrix},
\begin{pmatrix}
0 \\  0   \\ \alpha^{66 } /14680064  \\ 3\alpha^{66 } /58720256
\end{pmatrix},\\
&
\begin{pmatrix}
0 \\  0   \\ \alpha^{72 }/58720256   \\ \alpha^{72 }/117440512  
\end{pmatrix},
\begin{pmatrix}
0 \\  0   \\ \alpha^{78 }/234881024   \\ \alpha^{78 }/939524096\
\end{pmatrix},
\begin{pmatrix} 
0   \\ \alpha^{79  } /939524096 \\  0   \\ \alpha^{79  } /234881024 
\end{pmatrix},\\
&
\begin{pmatrix}
0   \\ \alpha^{80} /939524096 \\  0   \\ 3\alpha^{80} /939524096\  
\end{pmatrix},
\begin{pmatrix} 
 \alpha^{80} /939524096  \\ \alpha^{80} /234881024 \\  0 \\  0   
\end{pmatrix},
\begin{pmatrix}
 \alpha^{81 } /939524096  \\ 3\alpha^{81 } /939524096  \\  0  \\ 0  
\end{pmatrix},\\
&
\begin{pmatrix} 
\alpha^{82} /939524096    \\ \alpha^{82} /469762048  \\ 0 \\  0  
\end{pmatrix},
\begin{pmatrix} 
 \alpha^{83 } /939524096  \\ \alpha^{83 } /939524096 \\ 0  \\ 0   
\end{pmatrix},\left.
\begin{pmatrix}
 \alpha^{84} /939524096  \\  0   \\ \alpha^{84} /939524096 \\  0  
\end{pmatrix}\right\}
\end{align*}

\end{document}